\newcommand{\rood}[1]{\ignorespaces} 
\newtheorem{thm}{Theorem}[section]
\crefname{thm}{Theorem}{Theorems}
\newtheorem{cor}[thm]{Corollary}
\newtheorem{prop}[thm]{Proposition}
\crefname{prop}{Proposition}{Propositions}
\newtheorem{lem}[thm]{Lemma}
\crefname{lem}{Lemma}{Lemmas}
\newtheorem{clm}[thm]{Claim}
\newtheorem{defn}[thm]{Definition}
\crefname{defn}{Definition}{Definitions}
\newtheorem{prin}[thm]{Principle}
\crefname{prin}{Principle}{Principles}
\theoremstyle{definition}
\newtheorem{exmp}[thm]{Example}
\newtheorem{notn}[thm]{Notation}
\newtheorem{rmk}[thm]{Remark}
\newtheorem*{ack*}{Acknowledgements}
\newcommand{\on}{\operatorname}
\newcommand{\mb}{\mathbb}
\newcommand{\mc}{\mathcal}
\newcommand{\ms}{\mathscr}
\newcommand{\wt}{\widetilde}
\renewcommand{\P}{\mb{P}}
\newcommand{\Orb}{\mathsf{Orb}}
\renewcommand{\k}{\mathbb{C}}
\newcommand{\p}{\mathsf{p}}
\title{Equivariant Degenerations of Plane Curve Orbits}
\author{Mitchell Lee, Anand Patel, Dennis Tseng}
\date{\today}
\begin{document}

\maketitle
\begin{abstract}
  
  In a series of papers, Aluffi and Faber computed the degree of the
  $GL_3$-orbit closure of an arbitrary plane curve. We attempt to
  generalize this to the equivariant setting by studying how orbits
  degenerate under some natural specializations, yielding a fairly
  complete picture in the case of plane
  quartics. 

\end{abstract}

\section{Introduction}

Let $V$ be an $(r+1)$-dimensional vector space, and let
$F \in \on{Sym}^{d}V^\vee$ be a non-zero degree $d$ homogeneous form
on $V$. The form $F$ naturally produces two varieties, first the
$GL(V)$-orbit closure

\[\Orb(F) \subset \on{Sym}^{d}V^\vee,\] and secondly its
projectivization
\[\P \Orb(F) \subset \P\on{Sym}^{d}V^\vee.\]  The relationship between the
geometry of $\P \Orb(F)$ and the geometry of the hypersurface $X$
defined by $\{F=0\}$ remains mostly mysterious.

Consider, for example, the enumerative problem of computing the degree
of $\P \Orb(F)$. The analysis of the degree of $\P \Orb(F)$ was
carried out for the first two cases $r=1,2$ in a series of remarkable
papers of Aluffi and Faber
\cite{AFpoint,AF93,AFsmall1,AFsmall2,AF00,AFpgl1,AFpgl2}.  Aluffi and
Faber's computation in the special case $r=2, d=4$ of quartic plane
curves yields the highly non-trivial enumerative consequence: {\sl In
  a general $6$-dimensional linear system of quartic curves, a general
  genus $3$ curve arises $14280$ times.}

Our starting point is to interpret the calculation of the degree of
$\P \Orb(F)$ as equivalent to the computation of the fundamental class
$[\P \Orb(F)] \in A^{\bullet}(\P \on{Sym}^{d}V^{\vee})$. Our extra
contributions stem from one very simple observation. Since
$\P \Orb(F)$ is evidently preserved by the action of $GL(V)$, there is
a natural {\sl equivariant} extension of the problem: Compute the {\sl
  equivariant} fundamental class
\[[\P \Orb(F)]_{GL(V)} \in A^{\bullet}_{GL(V)}(\P
  \on{Sym}^{d}V^{\vee}).\]

In elementary terms, beginning with a rank $r+1$ vector bundle
$\mc{V}$ over a base $B$, the class $[\P \Orb(F)]$ encodes the
universal expressions in the chern classes
$c_1\mathcal{V}, ..., c_{r+1}\mathcal{V}$ appearing in the fundamental
class of the relative orbit closure cycle
$(\P \Orb(F))_{\mathcal{V}} \subset \P
\on{Sym}^{d}\mathcal{V}^{\vee}$. This larger equivariant setting
encodes the solution to many more challenging enumerative
problems. For instance, our analysis of the case of quartic plane
curves yields the enumerative consequence: {\sl A general genus $3$
  curve appears $510720$ times as a $2$-plane slice of a fixed general
  quartic threefold.}

To date, very few equivariant classes $[\P \Orb(F)]_{GL(V)}$ have been
determined. For quadric hypersurfaces ($d=2$), the class
$[\P \Orb(F)]_{GL(V)}$ is determined by the rank of the quadric
hypersurface $F=0$ and its computation amounts to the Porteous formula
for symmetric maps \cite{HT84}. For another class of examples, the
authors' work with H. Spink in \cite{FML} provides access to
$[\P \Orb(F)]_{GL(V)}$ when $F=0$ defines a hyperplane arrangement. In
this paper, we study the frontier case $r=2$ of plane curves. As in
\cite{FML}, our strategy will be to {\sl specialize} the orbit
$\P \Orb(F)$ until it breaks into a union of other orbits
$\P \Orb(F_i)$ whose classes are more directly computable. To execute
this strategy, we initiate a detailed study of how orbits of plane
curves behave under some common degenerations.  This is our main
purpose in the paper.
 
We will summarize the results of this study in the next subsection,
but we emphasize here that the overall picture in the case of quartic
plane curves is quite neat and beautiful -- we deduce clean
relationships among different classes $[\P \Orb(F)]_{GL(V)}$ for $F$
ranging over several types of quartic plane curves possessing special
geometric properties.

Finally, because the computation of equivariant orbit classes does not
have a strong presence in the literature, we have written an appendix
containing the $r=1$ case (points on a line) and the $r=2, d=3$ case
of cubic plane curves. As a refreshing demonstration of alternate
techniques, the case of points on a line is done in two independent
ways: one by specializing the results in \cite{FML} and the other by
applying the Atiyah-Bott formula to the resolution of the orbit given
in \cite{AFpoint}.

\subsection{Summary of Degenerations}
\label{sec:degeneration}
When studying degenerations, it is more convenient to weight orbits
$\P \Orb(F)$ by the number of linear automorphisms of the hypersurface
$X$ given by $F=0$.  We will write $\Orb(F)$ and $\Orb(X)$
interchangeably (and similarly for their projectivized versions).  In
this section, we will exclusively be concerned with the case of plane
curves, $r=2$.

If $C_t$ is a family of smooth plane curves specializing at $t=0$ to a
curve $C_0$ possessing nodes and cusps, we get a corresponding
specialization of the (weighted) orbit closures $\P \Orb(C_{t})$ to
some union of $GL(V)$-invariant varieties. \Cref{nodecusp} gives a
complete description of the new orbits appearing in the flat limit as
$t \to 0$. To illustrate this theorem, we will describe what happens
in the special case where the curve acquires a single node or a single
cusp. The general case is essentially a sum of contributions
corresponding to each node or cusp.
\subsubsection{Acquiring a node}
\label{subsec:node}
If $C_t$ acquires a single node in the limit $C_0$, then as a limit of
weighted orbits, one obtains the obvious weighted orbit
$\P \Orb(C_{0})$ along with one other weighted orbit,
$\P \Orb(C_{BN})$, which occurs with multiplicity 2.
\label{sec:summarydeg}
\begin{center}
  \includegraphics[scale=.2]{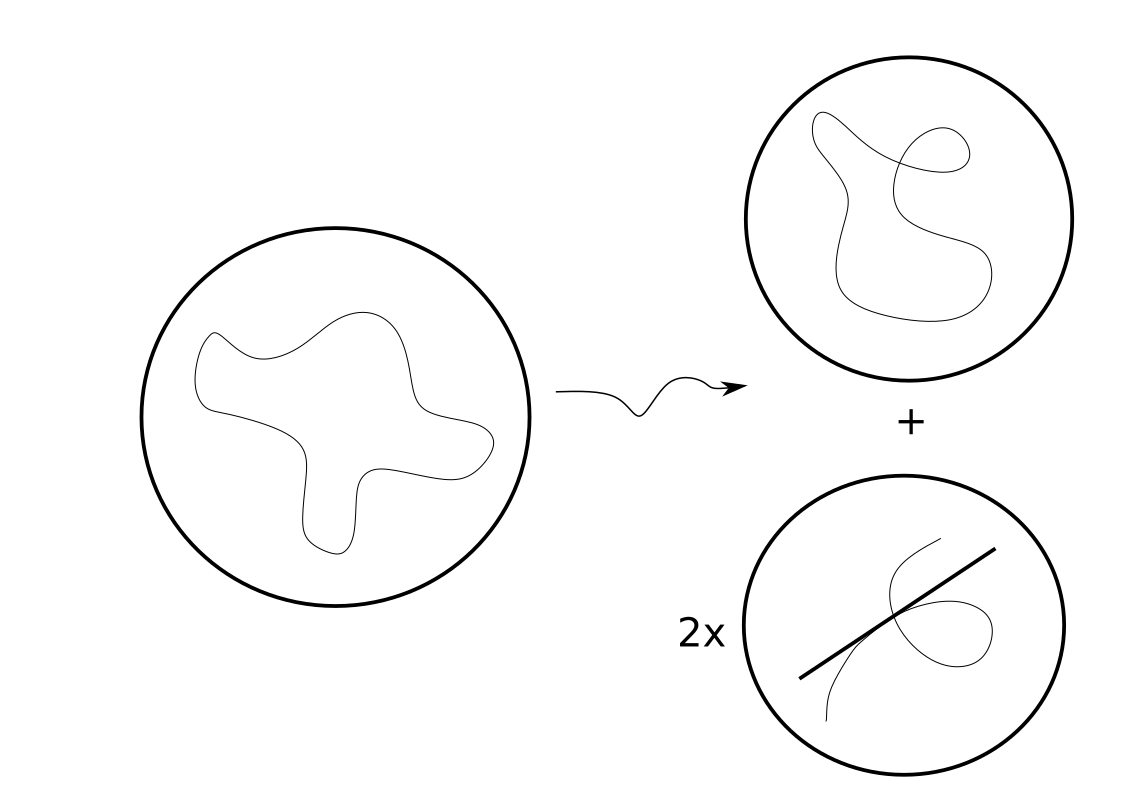}
\end{center}
The curve $C_{BN}$ is the union of a nodal cubic with one of its two
tangent lines (branches) at the node, the line taken with multiplicity
$(d-3)$.
\subsubsection{Acquiring a cusp}
If $C_t$ acquires a single cusp in the limit $C_0$, then as a limit of
weighted orbits, one obtains the weighted orbit of the cuspidal curve
$C_{0}$ along with another weighted orbit, $\P \Orb(C_{\on{flex}})$.
\begin{center}
  \includegraphics[scale=.2,trim={0 0 11cm
    0},clip]{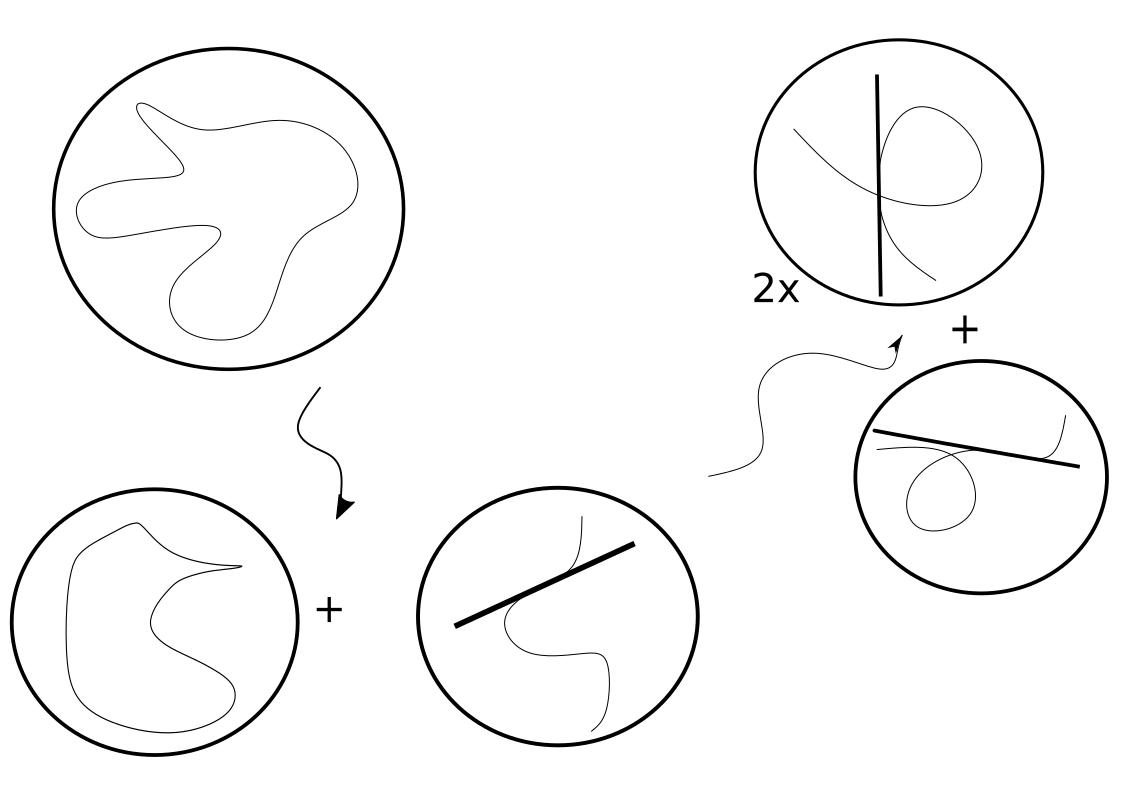}\hspace{1in}
  \includegraphics[scale=.2,trim={11cm 0 0
    0},clip]{ImageFiles/cuspsiblingarXiv.png}
\end{center}
The curve $C_{\on{flex}}$ is a smooth cubic (with general
$j$-invariant) union a $(d-3)$-fold flex line. We can degenerate the
weighted orbit $\P \Orb(C_{\on{flex}})$ even further to get the
weighted orbit of $C_{BN}$ from before (with multiplicity 2) together
with the weighted orbit of a new curve $C_{AN}$. $C_{AN}$ is the union
of a nodal cubic with one of its flex lines (at a smooth point), with
the line taken $(d-3)$ times.

In either case, we deduce that the equivariant class
$[ \P \Orb({C_{t}})]_{GL(V)}$, for $t$ general, is a particular
combination of $[ \P \Orb({C_{0}})]_{GL(V)}$ and two specific classes
$[ \P \Orb({C_{BN}})]_{GL(V)}$, and $[ \P \Orb({C_{AN}})]_{GL(V)}$.

\subsubsection{Splitting off a line}
The equivariant class of the orbit closure of a union of lines can be
deduced using the results of \cite{FML}.  In light of this, it is
natural to want to study the degeneration where a degree $d$ plane
curve specializes to a union of $d$ general lines. We will show that
if $C_t$ is a general family of curves such that $C_0$ is a general
union of lines, then in the flat limit of orbit closures, beyond the
obvious orbit closure $\P \Orb({C_{0}})$ we find $d$ other orbits,
each being the weighted orbit of a general irreducible plane curve
possessing a multiplicity $d-1$ point.
\begin{center}
\includegraphics[scale=.2]{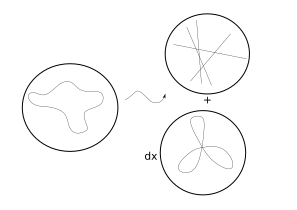}
\end{center}
More generally, we also study specializations where $C_0$ is a union
of a general degree $e$ curve along with $d-e$ general lines (see
\Cref{linespecialization}).

\begin{rmk}
  For any reader experienced in the art of degeneration, the fact that
  the curves with $(d-1)$-fold points arise in the limit is not a
  surprise.  The difficulty lies in showing that all new orbits are
  accounted for.
\end{rmk}
\subsubsection{Degeneration to the Double Conic}
\label{sec:doubleconic}
Next suppose $C_t$ is a family of general plane quartics specializing
to a double conic -- a classic example in the study of moduli of
curves.  Since a double conic has a large stabilizer group under the
action of $PGL(V)$, its orbit closure will not appear as an
irreducible component of the $t \to 0$ flat limit of orbit
closures. We will show (in \Cref{smooth}) that in this situation, the
weighted orbit of $C_t$ specializes to $8$ times the weighted orbit of
a particular rational quartic curve possessing an $A_6$ singularity.

\begin{center}
    \includegraphics[scale=.3]{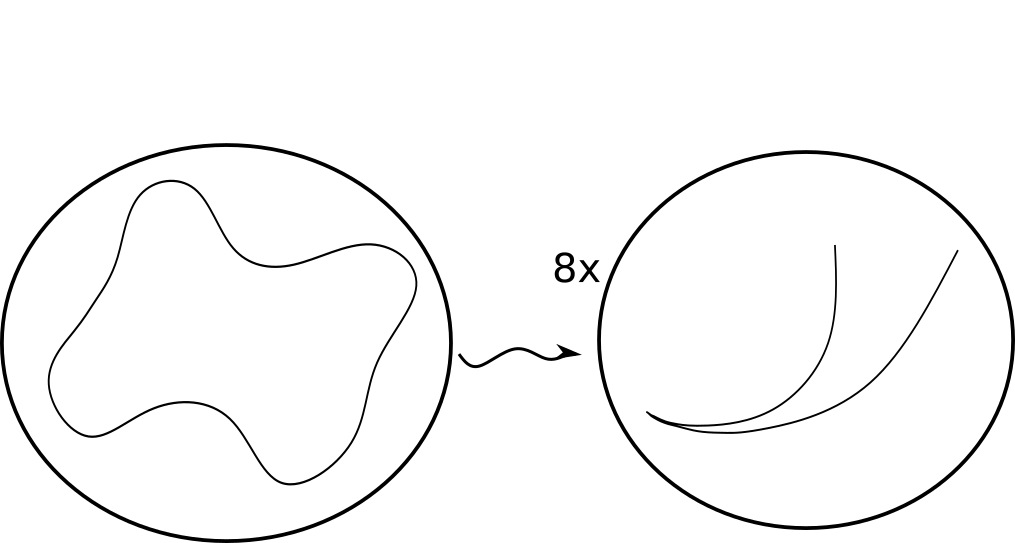}
\end{center}

It is striking that the limit has such a clean answer, consisting
set-theoretically of the orbit closure of {\sl the} irreducible
quartic plane curve with an $A_6$ singularity. This fact is related to
the question of which planar quartics account for a {\sl general}
hyperelliptic curve after applying semistable reduction. The
multiplicity 8 we obtain corresponds to the 8 Weierstrass points on a
genus 3 hyperelliptic curve. For literature relevant to this circle of
ideas, see \cite{P74,H00,CL13,F14}.

\subsection{Orbit classes of quartic curves}
\label{sec:quartic}

In the specific setting of quartic curves, we find that the orbit
class of an {\sl arbitrary} smooth quartic can be deduced in a direct
way from the orbit classes of special quartics with $A_6$ and $E_6$
singularities.  We have already explained the relation with curves
having an $A_6$ singularity above.  By adapting an idea of Aluffi and
Faber \cite[Theorem IV(2)]{AF93P}, in \Cref{sec:hyperflex} we
specialize the orbit closure of a general quartic plane curve to the
orbit of a smooth quartic plane curve possessing a hyperflex. In the
limit, the orbit closure of a particular rational quartic (denoted
$C_{E_{6}}$ in the text) possessing an $E_6$ singularity appears (with
multiplicity twice the number of hyperflexes of the limiting smooth
quartic).

Ultimately, we deduce that the orbit class $[\P \Orb({F})]_{GL(V)}$ of
an {\sl arbitrary} smooth quartic $F=0$ is expressible solely in terms
of orbit classes of two specific rational curves having an $A_6$ or
$E_6$ singularity.  These are the curves $C_{A_{6}}$ and $C_{E_{6}}$
in the text. From here, we arrive at explicit formulas for orbit
classes by invoking Kazarian's work \cite{Kazarian} on counting $A_6$
and $E_6$ singularities in families of curves.

We also compute the equivariant classes of orbit closure for many
singular quartics using the degenerations in
\Cref{sec:degeneration}. (For those readers curious about the lack of
the presence of $D_6$ singularities, the curves with a $D_6$
singularity arise when specializing to a node as in
\Cref{subsec:node}.)

We summarize all computations in the table in \Cref{quartictable}.
Let us explain how to read the table.
For a plane curve $C\subset\mb{P} V$ with an $8$-dimensional $PGL(V)$
orbit, the expressions $\p_{C}$ are defined to be the
$GL(V)$-equivariant classes $[\Orb(C)]_{GL(V)}$ multiplied by the
number of $PGL(V)$-automorphisms of $C$. We note here that the classes
$[\mb{P}\Orb(C)]_{GL(V)}$ are related to $[\Orb(C)]_{GL(V)}$ by a
simple substitution --see \Cref{projectivethom}.

\begin{figure}[p]
\begin{longtable}{p{6.5cm} | p{9cm}}
  Quartic Plane Curve $C$ & $\p_C(c_1,c_2,c_3)$\\\hline
  $C_{A_6}: (x^2 + yz)^2 + 2yz^3 = 0$ & 
                                        $ 3\cdot112 (9 c_1^3 + 12 c_1 c_2 - 11 c_3) (2 c_1^3 + c_1 c_2 + c_3)$\\
  $C_{D_6}: z(xyz + x^{3} + z^{3})$ &$3\cdot 64 (18 c_1^6 + 33 c_1^4 c_2 + 12 c_1^2 c_2^2 - 85 c_1^3 c_3 - 11 c_1 c_2 c_3 - 7 c_3^2)$\\
  $C_{E_6}: y^3z + x^4 + x^2y^2 = 0$ &
                                       $2\cdot 48 (2 c_1^3 + c_1 c_2 + c_3) (9 c_1^3 - 6 c_1 c_2 + 7 c_3)$\\
  $C_{AN}$: Nodal cubic union flex line & $2\cdot 192 (18 c_1^6 + 33 c_1^4 c_2 + 12 c_1^2 c_2^2 + 19 c_1^3 c_3 - 7 c_1 c_2 c_3 - 
                                          35 c_3^2)$\\
  $C_{\on{flex}}$: smooth cubic union flex line & $\p_{C_{AN}}+2\p_{D_6}$\\
  $Q$: Quadrilateral &  
                       $24\cdot 16 (18 c_1^6 + 33 c_1^4 c_2 + 12 c_1^2 c_2^2 + 131 c_1^3 c_3 + 
                       153 c_1 c_2 c_3 - 147 c_3^2)$\\
  $C_{D_{4}}$: a general curve with $D_4$ singularity & 
                                                        $\frac{1}{4}(8\p_{C_{A_{6}}} - \p_{Q})$
  \\
  Two lines plus conic & $\p_{Q} + 2\p_{C_{D_{4}}}$\\
  A line plus a general cubic & 
                                $\p_{Q} + 3\p_{C_{D_{4}}}$
  \\
  Quartic with $\delta$ nodes and $\kappa$ cusps and no hyperflexes& $8\p_{C_{A_{6}}}-2\delta \p_{C_{D_6}}-\kappa \p_{C_{\on{flex}}}$\\
  A smooth quartic with $n$ hyperflexes &  $8\p_{C_{A_{6}}}-n\p_{C_{E_6}}$\\
  General smooth quartic & $8\p_{C_{A_{6}}}$
\end{longtable}
\caption{Equivariant classes of orbits of quartic plane curves}
\label{quartictable}
\end{figure}

\subsubsection{Application: plane sections of a quartic threefold}
To see how to use the expressions $\p_C$, we provide the calculation
of $510720$ mentioned earlier in the introduction.

Starting with a general smooth quartic threefold
\[X\subset \mathbb{P}^4,\]
one obtains a rational map
\begin{align*}
   \Phi:  \mb{G}(2,4)\dashrightarrow \overline{M_3}
\end{align*}
sending a general $2$-plane $\Lambda\subset\mathbb{P}^4$ to the moduli
point of the plane quartic $C := X \cap \Lambda$. Our calculation of
the equivariant class $\p_{C}$ for $C$ a general quartic implies:
\begin{cor}
\label{numberplanesection}
If $X$ is general, the map $\Phi$ has degree 510720.
\end{cor}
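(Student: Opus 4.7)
The plan is to express $\deg \Phi$ as an equivariant intersection number on $\mathbb{G}(2,4)$ and to evaluate it using the orbit class computed in Theorem~\ref{alldegenerations:main}. The first step is a reduction: for general $[C] \in \overline{M_3}$ the curve $C$ is non-hyperelliptic with trivial automorphism group, and its canonical map embeds $C$ as a plane quartic, uniquely up to the action of $PGL_3$. Hence $\Phi^{-1}([C])$ is set-theoretically the locus of $\Lambda \in \mathbb{G}(2,4)$ such that the restriction $F|_\Lambda$ of the defining form $F$ of $X$ lies in the $GL_3$-orbit $O_C \subset \mathrm{Sym}^4 \Lambda^\vee$, and each such $\Lambda$ maps $1$-to-$1$ to $[C]$.

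Next I would recast this count as an intersection on $\mathbb{G}(2,4)$. Let $\mathcal{S}$ be the tautological rank-$3$ subbundle, so that $F$ restricts to a global section $\sigma_F$ of $\mathrm{Sym}^4\mathcal{S}^\vee$ and the orbits $O_C$ assemble into a relative orbit closure $\mathcal{O}_C$ inside the total space. Because any two sections of a vector bundle pull cycles in the total space back to the same class in cohomology, $\sigma_F^*[\mathcal{O}_C]$ equals the restriction of $[\mathcal{O}_C]$ to the zero section, which by the standard equivariant-to-ordinary specialization is $[O_C]_{GL_3}$ evaluated at the Chern classes of $\mathcal{S}$. Since $\mathrm{codim}\,O_C = 15 - \dim GL_3 = 6 = \dim \mathbb{G}(2,4)$, and assuming transversality, the expected $0$-dimensional count is
\[
  \deg \Phi \;=\; \int_{\mathbb{G}(2,4)} [O_C]_{GL_3}\bigl(c_1(\mathcal{S}), c_2(\mathcal{S}), c_3(\mathcal{S})\bigr).
\]
Combining the bottom row of Figure~\ref{quartictable} with $|\mathrm{Aut}_{PGL_3}(C)| = 1$, the integrand becomes $8\, p_{C_{A_6}}\bigl(c_1(\mathcal{S}), c_2(\mathcal{S}), c_3(\mathcal{S})\bigr)$, an explicit polynomial of total degree $6$.

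The computation is then finished by Schubert calculus on $\mathbb{G}(2,4)$: expand $c_i(\mathcal{S})$ in the special Schubert classes using $c(\mathcal{S})c(\mathcal{Q}) = 1$, reduce via $c_4(\mathcal{S})=0$, and pair with the class of a point. The arithmetic is expected to yield $510720$. The main technical obstacle is the transversality assumption. One must check that for a general quartic threefold $X$ the section $\sigma_F$ meets the relative orbit $\mathcal{O}_C$ transversally and avoids the boundary $\overline{\mathcal{O}_C}\setminus \mathcal{O}_C$. The boundary has relative codimension at least $7$ in $\mathrm{Sym}^4\mathcal{S}^\vee$, since $\overline{O_C}\setminus O_C$ is a finite union of smaller-dimensional $GL_3$-orbits in $\mathrm{Sym}^4 V^\vee$; its expected intersection with the $6$-dimensional $\sigma_F$ is therefore negative and hence empty for general $X$, while a parallel dimension count plus genericity of $X$ gives transversality on $\mathcal{O}_C$ itself. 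This ensures that each $\Lambda \in \Phi^{-1}([C])$ is counted with multiplicity one, matching the set-theoretic fiber with the intersection number above.
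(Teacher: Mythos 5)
Your proposal follows the same route as the paper: both identify $\deg\Phi$ with $\int_{\mb{G}(2,4)} s^*[(O_C)_{\mc{S}}]$ for the section of $\on{Sym}^4\mc{S}^\vee$ induced by the defining form of $X$, both invoke the formula $p_C = 8p_{C_{A_6}}$ for a general smooth quartic together with a dimension count to rule out boundary intersections and guarantee transversality, and both finish by evaluating the resulting degree-$6$ Chern polynomial on the Grassmannian. The only thing you leave implicit is the final arithmetic, which the paper carries out explicitly to obtain $510720$.
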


\begin{figure}[p]
\begin{longtable}{p{6.5cm} | p{9cm}}
  Quartic Plane Curve $C$ & ($\#\on{Aut}(C)\cdot \#$ planar sections of general quartic threefold)\\\hline
  $C_{A_6}: (x^2 + yz)^2 + 2yz^3 = 0$ & 
                                        $ 3\cdot21280$\\
  $C_{D_6}: z(xyz + x^{3} + z^{3})$ &$3\cdot 7040$\\
  $C_{E_6}: y^3z + x^4 + x^2y^2 = 0$ &
                                       $2\cdot 4800$\\
  $C_{AN}$: Nodal cubic union flex line & $2\cdot 36480$\\
  $C_{\on{flex}}$: smooth cubic union flex line & $2\cdot 57600$\\
  $Q$: Quadrilateral &  
                       $24\cdot 5600$\\
  $C_{D_{4}}$: a general curve with $D_4$ singularity & 
                                                        $94080$
  \\
  Two lines plus conic & $322560$\\
  A line plus a general cubic & 
                                $416640$
  \\
  Quartic with $\delta$ nodes and $\kappa$ cusps and no hyperflexes& $510720-2\delta (3\cdot 7040)-\kappa (2\cdot 57600)$\\
  A smooth quartic with $n$ hyperflexes &  $510720-n(2\cdot 4800)$\\
  General smooth quartic & $510720$
\end{longtable}
\caption{Number of times we see a particular curve as a planar section
  of a quartic threefold with \emph{specified moduli}.}
\label{sectiontable}
\end{figure}

\begin{proof}[Proof of \Cref{numberplanesection}]
  We find the degree of $\Phi$ directly by choosing a general quartic
  plane curve $C\subset \mb{P}^2$ and counting the number of
  $2$-planes $\Lambda$ such that $X\cap \Lambda$ is isomorphic to $C$.

  Let $G$ be a quartic homogenous form cutting out a general quartic
  threefold $X\subset \mb{P}^4$, and let $\pi: \mc{S}\to \mb{G}(2,4)$
  denote the rank $3$ tautological subbundle over the
  Grassmannian. The form $G$ defines a section of
  $\ms{O}_{\mb{P}(\mc{S})}(4)$ on $\mb{P}(\mc{S})$, which in turn
  induces a section $s: \mb{G}(2,4)\to \on{Sym}^4\mc{S}^{\vee}$. Let
  $(\Orb(C))_{\mc{S}} \subset \on{Sym}^4\mc{S}^{\vee}$ be the
  relative orbit.  Since $G$ is general, the section $s$ will
  intersect $(\Orb(C))_{\mc{S}}$ only in the interior of the relative
  orbit. Since $C$ is general the intersection will consists of
  reduced points by generic reducedness in characteristic zero. In
  this paper, we will assume the characteristic is at least 7 (see
  \Cref{basefield}), and a standard transversality argument which we
  omit shows that the intersection is also still reduced in this
  case. Therefore, $\on{Image}(s)$ and $(\Orb(C))_{\mc{S}} $ are
  smooth at the reduced scheme
  $\on{Image}(s)\cap (\Orb(C))_{\mc{S}} $, and
  $\deg(\Phi)=\int_{\mb{G}(2,4)}s^{*}[(\Orb(C))_{\mc{S}} ]$.
  Expanding the formula for $\p_C$, we get
\begin{align*}
  &48384 c_{1}(S)^6 + 88704 c_{1}(\mc{S})^4 c_{2}(\mc{S}) + 32256 c_{1}(\mc{S})^2 c_{2}(\mc{S})^2 - 34944 c_{1}(\mc{S})^3 c_{3}(\mc{S})\\ + 
  &2688 c_{1}(\mc{S}) c_{2}(\mc{S}) c_{3}(\mc{S}) - 29568 c_{3}(\mc{S})^2.
\end{align*}

By evaluating on the Grassmannian, we conclude:
\begin{align*}
  \deg \Phi =  48384 \cdot 5 + 88704 \cdot 3 + 32256 \cdot 2 - 34944 + 2688 - 29568 = 510720, 
\end{align*}
as claimed.
\end{proof}

Beyond computing the number of times a {\sl general} planar quartic
appears on $X$, our methods also apply to various singular quartics as
well. The results are summarized in \Cref{sectiontable}.

\begin{exmp}
  The number of tricuspidal curves arising as a section of a quartic
  threefold is 27520 by applying Kazarian's theory of
  multisingularities. More precisely, the number can in principle be
  deduced from \cite[Section 8]{K03}, but the formula for $2$-planes in
  $\mb{P}^4$ meeting a degree $d$ hypersurface in a curve with three
  cusps can be found on Kazarian's website. From \Cref{sectiontable},
  we get $510720 - 3\cdot 2\cdot57600=6\cdot 27520$, accounting for
  the 6 automorphisms of the tricuspidal quartic.  This agrees with
  Kazarian's formula. However, for example, our \Cref{sectiontable}
  also computes the number of 1-cuspidal and 2-cuspidal curves having
  prescribed moduli, which is not covered by the theory of
  multisingularities.
\end{exmp}

\begin{rmk}
  The class $\p_{C}$ for $C$ a general quartic curve, as well as the
  number $510720$ has also been verified independently by the authors
  using the SAGE Chow ring package \cite{ChowRing} to implement the
  resolution used by Aluffi and Faber \cite{AF93} for smooth plane
  curves, though now in the relative setting. However, the
  computations were too cumbersome to verify by hand.
\end{rmk}

\subsection{Related Work}

This paper was heavily influenced and inspired by Aluffi and Faber's
computation of degrees of orbit closures of plane curves of arbitrary
degree.  Zinger also computed the degree of the orbit closure of a
general quartic as a special case of interpolating genus 3 plane
curves with a fixed complex structure \cite{Z05}.

\subsubsection{Planar sections of a hypersurface which have fixed
  moduli}
Counting linear sections of a hypersurface with fixed moduli has been
considered in the case of line sections of a quintic curve \cite{CL08}
and generalized to line sections of hypersurfaces of degree $2r+1$
hypersurfaces in $\mb{P}^r$ \cite{FML} by extending the computations
in the $r=1$ case (see \cite{AFpoint}) to the equivariant setting.

\subsubsection{Counting curves with prescribed singularities}
In addition to Kazarian's work \cite{Kazarian}, there have been
independent efforts to enumerate curves with various types of punctual
singularities, including \cite{BM16,K06,H03}.  

\subsection{Assumptions on the characteristic of the base field}
\label{basefield}

We will assume that our ground field is the complex numbers $\k$ has
characteristic zero.  However, all our techniques should work for
perfect fields of finite characteristic $\geq 7$, though we have not
paid special attention to the matter.

\subsection{Acknowledgements}
We would like to thank Paolo Aluffi and Joe Harris for helpful
conversations. We would also like to thank Carl Lian for a careful
reading and helpful comments.  Finally, we thank an anonymous referee
for helpful suggestions.

\section{Preliminary Definitions, Conventions, and Notation}
\label{sec:definitions}
In this section, we define equivariant generalizations of predegrees
of orbits of hypersurfaces as defined and studied by Aluffi, Faber,
and Tzigantchev \cite{T08,AFpoint,AF93,AF00}. Although we will
ultimately only deal with the case of points on a line, and plane
cubics and quartics, we provide general definitions for clarity and to
set the stage for future work.

\subsection{Conventions}
\label{sec:conventions}

We will work over an arbitrary algebraically closed base field $\k$.
By a {\sl scheme} we will mean a scheme of finite type over $\k$. By a
{\sl variety} we will mean a reduced, though not necessarily
irreducible, scheme. All varieties in the paper will be
quasi-projective.  If $X$ is a variety, then a {\sl subvariety} will
be a closed subscheme of $X$ which is also a variety.  As a rule, the
projectivization of a vector bundle parametrizes $1$-dimensional {\sl
  subspaces}, not quotients.  If $\mathcal{V} \to B$ is a vector
bundle, and if $H$ is the hyperplane class associated to the natural
$\mathcal{O}(1)$ on $\mathcal{V}$, then $H$ satisfies the {\sl Leray}
relation
\[H^{r+1} + c_{1}(\mathcal{V}) \cdot H^{r} + \dots +
  c_{r+1}(\mathcal{V}),\] where $c_{i}$ are the chern classes of
$\mathcal{V}$.

Until we explicitly specialize to the case $r=2$, $V$ will be a fixed
$(r+1)$-dimensional $\k$-vector space.  The action of $GL(V)$ on the
space $\on{Sym}^{d}V^{\vee}$ is as follows: If $F: V \to \k$ is a
degree $d$ form (i.e. an element of $\on{Sym}^{d}V^{\vee}$), and if
$g\in GL(V)$, then $g \cdot F$ is defined to be the form
$F \circ g^{-1}: V \to \k$.  This action descends to an action of
$PGL(V)$ on the projectivization $\P \on{Sym}^{d}V^{\vee}$, which we
will denote by $g \cdot [F]$.  If $X \subset \P V$ is a degree $d$
hypersurface, we will often write $X \in \P \on{Sym}^{d}V^{\vee}$ to
denote $[F]$ where $F$ is any defining equation for $X$.

The action of $GL(V)$ on $\on{End}(V)$ by {\sl post-composition} is as
follows: $g \in GL(V)$ acts on $\gamma \in \on{End}(V)$ via
$ g \circ \gamma$.  (This is to distinguish the action by {\sl
  pre-composition} where $g$ acts by sending $\gamma$ to
$\gamma \circ g^{-1}$.)  Either action descends to an action of
$PGL(V)$ on $\P \on{End}(V)$ in the obvious way.

\subsection{$GL(V)$-equivariant Chow classes}
Suppose $Z \subset \on{Sym}^dV^{\vee}$ is a subvariety preserved by
the action of $GL(V)$, and let
$\P Z \subset \mb{P}(\on{Sym}^dV^{\vee})$ denote its projectivization.

In the presence of a rank $r+1$ vector bundle $\mathcal{V} \to B$ over
a base variety $B$, $Z$ determines a corresponding {\sl relative
  cycle} $Z_{\mathcal{V}} \subset \on{Sym}^d\mathcal{V}^{\vee}$ as
follows: Let $U \subset B$ denote an open set over which $\mathcal{V}$
can be trivialized.  After choosing any particular isomorphism of
$\mathcal{V}|_{U}$ with the product vector bundle $U \times V$ we
obtain a corresponding identification of
$\on{Sym}^d\mathcal{V}^{\vee}|_{U}$ with
$U \times \on{Sym}^dV^{\vee}$. In the latter space, we can take
$U \times Z$, and consider it, via the chosen isomorphism, as a
locally closed subset of $\on{Sym}^d\mathcal{V}^{\vee}$.  Its closure
is denoted $$Z_{\mathcal{V}} \subset \on{Sym}^d\mathcal{V}^{\vee},$$
and we call it the {\sl relative cycle} corresponding to $Z$.
$Z_{\mathcal{V}}$ is well-defined precisely because $Z$ was preserved
by the action of $GL(V)$. In a similar fashion, we define the
projectivized relative cycle
$\P Z_{\mathcal{V}} \subset \P \on{Sym}^d\mathcal{V}^{\vee}$.


Although $Z_{\mc{V}}$ obviously depends on $B$ and $\mc{V}$, its class
in $A^{\bullet}(B)\cong A^{\bullet}(\on{Sym}^d\mc{V}^{\vee})$ is a
universal expression in chern classes of $\mc{V}$ -- this is the
fundamental input we need from the equivariant intersection theory of
Edidin and Graham developed in \cite{EG98}.  In the language of
equivariant intersection theory, this universal class is the
equivariant fundamental class $[Z]_{GL(V)}$ in the equivariant Chow
ring \[A_{GL(V)}^{\bullet}(\on{Sym}^d V^{\vee}).\] The latter ring is
the free polynomial ring $\mathbb{Z}[c_1, \dots, c_{r+1}]$, where
$c_{i}$ are interpreted as the chern classes of the universal vector
bundle $V$ over the classifying stack $\mathbb{B}GL(V)$. To summarize:

\begin{defn}
\label{affinecohomology}
Given $Z$ as above, define $[Z]_{GL(V)}$ to be the polynomial in
$c_1,\ldots,c_{r+1}$ such that the class of $Z_{\mc{V}}$ is
$[Z]_{GL(V)}$ with the chern classes of $\mc{V}$ substituted for
$c_1,\ldots,c_{r+1}$. Equivalently, $[Z]_{GL(V)}$ is the
$GL(V)$-equivariant class of $Z$ in
$A_{GL(V)}^{\bullet}(\on{Sym}^dV^{\vee})\cong
\mb{Z}[c_1,\ldots,c_{r+1}]$.
\end{defn}

In a similar fashion to \Cref{affinecohomology}, we can define an
equivariant Chow class of $\P Z$. As before, there is a single formula
in the chern classes $c_{i}$ of $V$ and the hyperplane class $H$ on
the universal projective bundle $\P \on{Sym}^dV^{\vee}$ ($H$
corresponds to the line bundle
$\mathscr{O}_{\mb{P}(\on{Sym}^d V^\vee)}(1)$) which gives the class of
$[\mb{P}Z_{\mc{V}}]\in A^{\bullet}(\mb{P}(\on{Sym}^d\mc{V}^\vee))$.

\begin{defn}
\label{projectivecohomology}
Given $Z$ as above, define $[\mb{P}Z]_{GL(V)}$ to be the polynomial in
$c_1,\ldots,c_{r+1}$ and $H$ (of degree $\leq {d+r \choose r}-1$ in
$H$) such that the class of $\mb{P}Z_{\mc{V}}$ is $[\mb{P}Z]_{GL(V)}$
with the chern classes of $\mc{V}$ substituted for
$c_1,\ldots,c_{r+1}$ and
$c_{1}\mathscr{O}_{\mb{P}(\on{Sym}^d\mc{V}^\vee)}(1)$ substituted for
$H$. In equivariant language, $[\mb{P}Z]_{GL(V)}$ is the
$GL(V)$-equivariant class of $\P Z$ in
$$A_{GL(V)}^{\bullet}(\mb{P}(\on{Sym}^dV^{\vee}))\cong
\mb{Z}[c_1,\ldots,c_{r+1}][H]/(H^{d+r \choose r}+s_1H^{{d+r \choose
    r}-1}\cdots+s_{{d+r \choose r}}).$$ Here $s_{i}$ is the $i$-th
chern class of $\on{Sym}^d\mc{V}^\vee$, expressed in terms of the
chern classes $c_{i}$.
\end{defn}

It may seem like $[\mb{P}Z]_{GL(V)}$ contains more information than
$[Z]_{GL(V)}$, but the two are related by a simple algebraic
manipulation.  Let $u_1, \dots, u_{r+1}$ denote the formal chern roots
of the universal vector bundle $V$ over $\mathbb{B}GL(V)$ -- in other
words, $c_{i}$ is the $i$-th elementary symmetric expression in
$u_{1}, u_{2}, \dots$. Using the inclusion
$\mb{Z}[c_1,\ldots,c_{r+1}]\hookrightarrow \mb{Z}[u_1,\ldots,u_{r+1}]$
where $c_i$ maps to the $i$-th elementary symmetric function, we can
express $[Z]_{GL(V)}$ as a symmetric polynomial in
$u_1,\ldots,u_{r+1}$ and similarly $[\mb{P}Z]_{GL(V)}$ as a polynomial
in $u_1,\ldots,u_{r+1}$ and $H$ symmetric in the $u_i$'s.  With this
understood, we have:
\begin{prop}[{\cite[Theorem 6.1]{FNR05}}]
  \label{projectivethom}
  We have:
  \begin{align*}
    [Z]_{GL(V)}(u_1,\dots,u_{r+1})&=[\mb{P}Z]_{GL(V)}(u_1,\dots, u_{r+1},0)\\
    [\mb{P}Z]_{GL(V)}(u_1,\dots,u_{r+1},H)&= [Z]_{GL(V)}(u_{1}-\frac{H}{d},\dots,u_{r+1}-\frac{H}{d}).
  \end{align*}
\end{prop}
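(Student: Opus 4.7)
My plan is to deduce the first identity from the second by setting $H=0$, and to prove the second via a classical Thom-polynomial argument: I will realize $\mb{P}Z_{\mc{V}}$ as the preimage of a twisted cone under the tautological section of a twisted bundle on $\mb{P}(\on{Sym}^d\mc{V}^\vee)$.

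Here is the setup. Let $\pi: B' = \mb{P}(\on{Sym}^d\mc{V}^\vee) \to B$ denote the projection, set $E = \pi^*\on{Sym}^d\mc{V}^\vee$, and let $\wt E = E \otimes \mc{O}_{B'}(1)$. The tautological inclusion $\mc{O}(-1)\hookrightarrow E$ yields a nowhere-vanishing section $s: B' \to \wt E$. By the splitting principle, the Chern roots of $\wt E$ are the expressions $-(u_{i_1} + \cdots + u_{i_d}) + H$ indexed by multisets $\{i_1,\dots,i_d\}\subset\{1,\dots,r+1\}$, which coincide with the Chern roots of $\on{Sym}^d\mc{W}^\vee$ for a virtual bundle $\mc{W}$ whose Chern roots are $u_i - H/d$. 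Since $Z$ is a $\mb{G}_m$-invariant cone, the pullback $\pi^{-1}Z_{\mc{V}}\subset E$ admits a natural twist $\wt Z \subset \wt E$ by $\mc{O}(1)$, and I claim its class in $A^\bullet(\wt E) \cong A^\bullet(B')$ equals $[Z]_{GL_{r+1}}(u_1 - H/d, \dots, u_{r+1} - H/d)$. This is justified by passing to a splitting cover on which $\mc{O}(-1)$ has an honest $d$-th root, so that $\wt E = \on{Sym}^d \mc{W}^\vee$ literally and $\wt Z = Z_{\mc{W}}$ by definition.

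Set-theoretically, at a point $[F] \in B'$ the tautological section equals $F$ under the fiberwise identification $\wt E_{[F]} \cong \on{Sym}^d V^\vee$ afforded by choosing $F$ as a basis of $\mc{O}(-1)_{[F]}$, so $s([F])\in \wt Z$ iff $F\in Z$, whence $s^{-1}(\wt Z) = \mb{P}Z_{\mc{V}}$. The codimensions match: if $Z$ has codimension $c$ in $\on{Sym}^d V^\vee$, then both $\mb{P}Z_{\mc{V}}$ and $\wt Z$ have codimension $c$ in their respective ambient spaces, so the intersection is proper and the refined Gysin pullback gives $[\mb{P}Z_{\mc{V}}] = s^*[\wt Z]$. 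By the homotopy invariance of pullback along sections of a vector bundle, $s^*[\wt Z]$ equals the zero-section pullback, which I identified above with $[Z]_{GL_{r+1}}(u_1 - H/d,\dots,u_{r+1} - H/d)$. This proves the second identity; specializing $H=0$ yields the first.

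The main obstacle I anticipate is the properness step needed to promote the set-theoretic identity $s^{-1}(\wt Z) = \mb{P}Z_{\mc{V}}$ to the Chow-class identity $s^*[\wt Z] = [\mb{P}Z_{\mc{V}}]$. This requires the matching of codimensions together with an application of Fulton's refined Gysin map (or equivalently a deformation-to-the-normal-cone argument), and is the step most likely to require care when $Z_{\mc{V}}$ is singular. A secondary subtlety is the manipulation of the formal symbol $H/d$, which I plan to legitimize via the splitting-principle cover described above. Both issues are treated in \cite{FNR05}.
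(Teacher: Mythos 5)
The paper does not prove this proposition; it is quoted directly from \cite[Theorem 6.1]{FNR05}, so there is no internal proof to compare against. Your argument is the standard one behind that citation (twist the cone by $\mc{O}(1)$, pull back along the tautological section, and identify the twisted class via a formal $d$-th root of $\mc{O}(-1)$), and it is correct, including the sign bookkeeping for the convention that $\mb{P}$ parametrizes subspaces. The one gap you flag --- promoting $s^{-1}(\wt Z)=\mb{P}Z_{\mc{V}}$ to $s^*[\wt Z]=[\mb{P}Z_{\mc{V}}]$ with multiplicity one --- closes cleanly because $Z$ is a cone: at a generic (smooth) point $F$ of $Z$ the derivative of the tautological section spans a complement to the radial line $\langle F\rangle$ in the fiber, and $\langle F\rangle\subset T_FZ$ precisely by the cone property, so the section meets $\wt Z$ generically transversally and the refined Gysin class is the reduced fundamental class.
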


When $r=1$ and $2$, we will use letters $(u,v)$ for $(u_1,u_2)$ and
$(u,v,w)$ for $(u_1,u_2,u_3)$, respectively.

\subsection{Weighting by automorphism groups}
\label{sec:predegree}

\begin{defn}
  Given $X\subset \mathbb{P}(V)$ a degree $d$ hypersurface, we define
  $$\Orb (X)\subset \on{Sym}^dV^{\vee}$$ to be the $GL(V)$-orbit closure of any
  defining equation $F$ of $X$. Furthermore, we define
  $\on{Aut}(X) \subset PGL(V)$ to be the subgroup consisting of those
  projective automorphisms preserving $X$. Finally, we say that $X$
  has {\sl full dimensional orbit} if the group $\on{Aut}(X)$ is
  finite.
\end{defn}

In this paper, we will exclusively be concerned with hypersurfaces
which have full dimensional orbit.

\begin{defn}
\label{orbitthomdef}
Let $F \in \on{Sym}^dV^{\vee}$ be a degree $d$ homogeneous form
cutting out $X \subset \mathbb{P}(V)$. Then, define
\begin{align*}
  \p_X&:= \begin{cases}
    \#\on{Aut}(X)[\Orb(X)]_{GL(V)} &\qquad\text{if $\#\on{Aut}(X)<\infty$}\\
    0 &\qquad\text{if $\#\on{Aut}(X)=\infty$}.
  \end{cases}\\
  \mathsf{P}_X&:= \begin{cases}
    \#\on{Aut}(X)[\mb{P}\Orb(X)]_{GL(V)} &\qquad\text{if $\#\on{Aut}(X)<\infty$}\\
    0 &\qquad\text{if $\#\on{Aut}(X)=\infty$}.
    \end{cases}   
\end{align*}
\end{defn}

The polynomials $\mathsf{P}_X$ are equivariant generalizations of the
notion of predegree, as defined by Aluffi and Faber
\cite[Definition]{AF93}:
\begin{defn}
\label{predegree}
The {\sl predegree} of a hypersurface $X \subset \P V$ having full
dimensional orbit is $\#\on{Aut}(X)$ times the degree of the orbit
closure $\P \Orb(X)$ in the projective space
$\mb{P} \on{Sym}^{d}V^{\vee}$.  If the orbit of $X$ is not full
dimensional then we define its predegree to be zero.
\end{defn}
\begin{rmk}
  The predegree of a hypersurface $X$ is the coefficient of
  $H^{{d+r \choose r}-(r+1)^2}$ in $\mathsf{P}_{X}$.  Thus, the
  equivariant classes contain much more enumerative data than the
  predegree, namely all the other coefficients.  We will critically
  use the knowledge of the predegree in equivariant arguments.
\end{rmk}

\subsection{Notation for $GL(V)$-equivariant degenerations}
Our relations among $GL(V)$-equivariant orbit classes will be given by
degenerating orbits.  We now provide the basic setup for our
degenerations.

\begin{def}
  \label{defleadsto} Let
  $m \cdot Z, m_{1} \cdot Z^{1}, \dots, m_{k} \cdot Z^{k} \subset
  \on{Sym}^dV^{\vee}$ be cycles (i.e. irreducible, closed subvarieties
  with attached positive multiplicities $m, m_{i} \in \mathbb{Z}$),
  each preserved by the action of $GL(V)$.  Then we write
  $$Z \leadsto \sum_{i=1}^{k}Z^{i}$$ if there exists:
  \begin{enumerate}
  \item An open neighborhood $U \subset \mathbb{A}^{1}$ containing the
    point $0$,
  \item a closed subvariety $W \subset U \times \on{Sym}^dV^{\vee}$
    flat over $U$ and invariant under the action of $GL(V)$ (acting on
    the second factor) with the property that

  \item there exists a point $u \in U$ such that the fiber
    $W_{u} \subset \{u\} \times \on{Sym}^dV^{\vee}$ is equal to $Z$
    with multiplicity $m$, and the fiber
    $W_{0} \subset \{0\} \times \on{Sym}^dV^{\vee}$ (the
    scheme-theoretic fiber of $W \to U$ over $0$) is the union of the
    $Z^{i}$'s with $Z^{i}$ having multiplicity $m_{i}$.
  \end{enumerate}
\end{def}

(The concept of $\leadsto$ affords its obvious projectivized version
for $GL(V)$-equivariant cycles in $\P \on{Sym}^{d}V^{\vee}$.)

We will primarily be interested in the notion of $\leadsto$ in the
context were $Z$ is an orbit closure $\Orb(F)$. This is the subject of
the next section.


\subsection{Families of orbits}
\label{degenerationsection}
Our intention here is to gather the basic degeneration tools specific
to orbit closures, tools we will repeatedly use throughout the
paper. Every degeneration in the paper will implicitly use the
framework described below.

Let $(U,0)$ denote an open neighborhood of $0$ in $\mathbb{A}^{1}$ ,
and suppose
\begin{align*}
  \alpha: U \to \P \on{Sym}^{d}V^{\vee}
\end{align*}
is a map, inducing a corresponding family of degree $d$ hypersurfaces
\[\pi: \mathcal{X} \to U.\]

Next, let
\[\mathring{\mathcal{Y}} \subset U \times PGL(V) \times \P
  \on{Sym}^{d}V^{\vee}\] denote the closed subset consisting of
triples $(u,g,X)$ satisfying $X = g \cdot \alpha(u)$.  In other words,
$\mathring{\mathcal{Y}}$ is the graph of the action map
$(u,g) \mapsto g \cdot \alpha(u)$.
\begin{defn}
  \label{def:Y}
  Define
\begin{align}
  \label{eq:Y}
  \mathcal{Y} \subset U \times \P \on{End}(V) \times \P \on{Sym}^{d}V^{\vee}
\end{align}
to be the closure of $\mathring{\mathcal{Y}}$ (using the natural open
inclusion $PGL(V) \subset \P \on{End}(V)$).
\end{defn}
Since $PGL(V) \subset \P \on{End}(V)$ is open, and since graphs are
closed, it follows that $\mathring{\mathcal{Y}} \subset \mathcal{Y}$
is an open set.  The map sending $(u,g) \in U \times PGL(V)$ to
$(u,g,g \cdot \alpha(u)) \in \mathring{\mathcal{Y}}$ clearly induces
an isomorphism which we denote by
\[\iota: U \times PGL(V) \to \mathring{\mathcal{Y}}\]

We let $\pi_{1}, \pi_{2}, \pi_{3}$ denote projections of $\mathcal{Y}$
to the respective factors
$U, \P \on{End}(V), \P \on{Sym}^{d}V^{\vee}$. For each $u \in U$, we
let $\mathcal{Y}_{u}$ denote the scheme-theoretic fiber
$\pi_{1}^{-1}(u)$. Observe that, for a general point $u \in U$,
$\mathcal{Y}_{u} := \pi_{1}^{-1}(u) \subset \mathcal{Y}$ is
irreducible, and therefore $\pi_{3}(\pi_{1}^{-1}(u))$ is the orbit
closure $\P\Orb(\mathcal{X}_{u})$.

Since $\pi_{1}: \mathcal{Y} \to U$ is flat ($\mathcal{Y}$ is
irreducible and $U$ is a smooth curve), every fiber of $\pi_{1}$ has
pure dimension equal to $\dim PGL(V)$.  The geometry of the special
fiber $\mathcal{Y}_{0}$ will be of utmost importance to us.

We let
\[\mathcal{A}_{0} \subset \mathcal{Y}_{0}\] denote the irreducible
component $\overline{\iota(\{0\} \times PGL(V))}$.  Then, by
construction, the map
$\pi_{3}: \mathcal{A}_{0} \to \P\on{Sym}^{d}V^{\vee}$ restricts on
$\{0\} \times PGL(V)$ to the action map $g \mapsto g \cdot \alpha(0)$,
and hence \[\pi_{3}(\mathcal{A}_{0}) = \P \Orb(\mathcal{X}_{0}).\]
Furthermore, in terms of cycles,
\[\pi_{3*}[\mathcal{A}_{0}] = \# \on{Aut}(\mathcal{X}_{0}) \cdot [\P \Orb(\mathcal{X}_{0})].\]
Along with $\mathcal{A}_{0}$, there may be several other components of
the fiber $\mathcal{Y}_{0} := \pi_{1}^{-1}(0)$, and our aim is to gain
a better understanding of these.

To that end, we must introduce the idea of {\sl twisting by
  $1$-parameter families.}  Let $U^{\times} = U \setminus \{0\}$, and
suppose we have a morphism
\begin{align}
  \label{eq:gammatwist}
  \gamma: U^{\times} \to PGL(V);
\end{align}
any such $\gamma$ will be called a {\sl $1$-parameter family.}  We
will abuse notation and let $\gamma$ also denote its canonical
extension $U \to \P \on{End}(V)$ across $0 \in U$.  In the presence of
a $1$-parameter family $\gamma$, we obtain a new family of
hypersurfaces,
\begin{align}
  \label{eq:twist}
  \alpha^{\gamma}: U^{\times} \to \P \on{Sym}^{d}V^{\vee} 
\end{align}
defined by $\alpha^{\gamma}(u) := \gamma(u) \cdot \alpha(u)$. We call
$\alpha^{\gamma}$ the {\sl twist} of $\alpha$ by $\gamma$.  Again, we
will abuse notation and use $\alpha^{\gamma}$ to also denote the
natural extension to $U$. Then, $\alpha^{\gamma}$ induces a new family
of hypersurfaces which we denote by
$\pi^{\gamma}: \mathcal{X}^{\gamma} \to U$.

Now consider the map
\[\iota^{\gamma}: U \times PGL(V) \to \mathcal{Y}\] defined by the formula
\[\iota^{\gamma}(u,g) = (u, g \circ \gamma(u), g \cdot
  \alpha^{\gamma}(u)).\]

Note that the appropriate restriction of $\iota^{\gamma}$ induces an
isomorphism between $U^{\times} \times PGL(V)$ and
$\mathring{\mathcal{Y}}^{\times}$, the latter being the open set
$\pi^{-1}(U^{\times}) \cap \mathring{\mathcal{Y}}$.  Denote by
$\iota^{\gamma}_{0}: PGL(V) \to \mathcal{Y}_{0}$ for the restriction
of $\iota^{\gamma}$ to the subset $\{0 \} \times PGL(V)$; simply put,
\[\iota^{\gamma}_{0}(g) = (0, g \circ \gamma(0), g \cdot
  \alpha^{\gamma}(0)).\]

Under the action of $PGL(V)$ (post-composition on middle factor) on
$\mathcal{Y}$, $\iota^{\gamma}_0$ is the action map for the point
$(0, \gamma(0), \alpha^{\gamma}(0))$.

\begin{defn}
  \label{def:fullgamma}
  We say the $1$-parameter family $\gamma$ is {\sl full for $\alpha$}
  if the $PGL(V)$ orbit of the point
  \[(0, \gamma(0), \alpha^{\gamma}(0)) \in \mathcal{Y}\] is full
  dimensional. Equivalently, $\gamma$ is full for $\alpha$ if
  $\iota^{\gamma}_{0}$ is quasi-finite onto its image.
\end{defn}

\begin{defn}
  \label{def:Agamma}
  Suppose $\gamma$ is full for $\alpha$. Define
  \[\mathcal{A}_{\gamma} \subset \mathcal{Y}_{0} \] to be the closure
  of the orbit of $(0, \gamma(0), \alpha^{\gamma}(0))$.  
\end{defn}

By fullness of $\gamma$, $\mathcal{A}_{\gamma}$ is an irreducible
component of $\mathcal{Y}_{0}$ which contains a dense
$PGL(V)$-orbit. The next lemma will be used to help us distinguish
among irreducible components of $\mathcal{Y}_{0}$.

\begin{lem}
  \label{gamma12}
  Suppose that $\gamma_{1,2}: U^{\times} \to PGL(V)$ are two
  $1$-parameter families which are full for $\alpha$, and let
  $\mathcal{A}_{\gamma_{1}}$ (resp. $\mathcal{A}_{\gamma_{2}}$) denote
  the corresponding irreducible component of $\mathcal{Y}_{0}$ as in
  \Cref{def:Agamma}.  Further suppose that
  $\gamma_{i}(0) \in \P \on{End}(V)$ and
  $\gamma_{2}(0) \in \P \on{End(V)}$ are not translates under the
  action of $PGL(V)$ on $\P \on{End}(V)$ via post-composition.

  Then
  \[\mathcal{A}_{\gamma_{1}} \neq \mathcal{A}_{\gamma_{2}}.\]
\end{lem}

\begin{proof}
  We will argue the contrapositive.

  Suppose
  $\mathcal{A}_{\gamma_{1}} = \mathcal{A}_{\gamma_{2}} = \mathcal{A}$.
  Then both points $(0,\gamma_{i}(0), \alpha^{\gamma_{i}}(0))$ are in
  $\mathcal{A}$, and both have full orbits.  As $\mathcal{A}$ is
  irreducible and has a dense orbit, it follows that both of the above
  points are in the same $PGL(V)$ orbit.

  Since $\pi_{2}: \mathcal{Y}_{0} \to \P \on{End}(V)$ is
  $PGL(V)$-equivariant, it follows that
  $ \gamma_{1}(0) = \gamma_{2}(0)$ are also in the same $PGL(V)$-orbit
  in $\P \on{End}(V)$, which is what we needed to show.
\end{proof}

\begin{rmk}
  \label{imageskernels}
  Observe that the condition ``$\gamma_{1}(0)$ and $\gamma_{2}(0)$ are
  not $PGL(V)$ translates'' is equivalent to the condition that the
  projective endomorphisms $\gamma_{i}(0)$ do not have the same {\sl
    kernel} subspaces in $\P V$ in the case of the action by {\sl
    post}-composition.  We will use this observation repeatedly.
\end{rmk}

\begin{prin}
  \label{principle}
  Suppose $\gamma_{i}$, $1\leq i\leq n$ are $1$-parameter families,
  full for $\alpha$, and suppose each pair $\gamma_{i},\gamma_{j}$
  satisfy the hypothesis in \Cref{gamma12}.  Then, the equivariant
  class
  \[\p_{\mathcal{X}_{u}}-\sum_{i=1}^{n}\p_{\mathcal{X}^{\gamma_{i}}_{0}}\]
  can be represented by a nonnegative sum of equivariant fundamental
  classes of effective cycles.

  If, in addition, the predegrees of the hypersurfaces
  $\mathcal{X}^{\gamma_{1}}_{0},\ldots,\mathcal{X}^{\gamma_{n}}_0$ add
  up to the predegree of $\mathcal{X}_{u}$, then,

  \[\p_{\mathcal{X}_u}=\sum_{i=1}^{n}\p_{\mathcal{X}^{\gamma_{i}}_{0}}.\]
\end{prin}

\begin{proof}
  Recall the variety $\mathcal{Y}$ from \Cref{def:Y}, and
  $\overline{\mathcal{Y}} \subset U \times \P \on{Sym}^{d}V^{\vee}$
  its image under projection onto first and third factors
  $(\pi_{1},\pi_{3})$. In light of the hypotheses in \Cref{gamma12},
  we obtain pairwise distinct irreducible components
  $\mathcal{A}_{i} \subset \mathcal{Y}_{0}$ of $\mathcal{Y}_{0}$,
  $i=1, \dots, n$. By their constructions, the composite
  \[\pi_{3}\circ \iota^{\gamma}_{0}: PGL(V) \to \P \on{Sym}^{d}V^{\vee}\] the action map
  corresponding to each hypersurface $\mathcal{X}^{\gamma_{i}}_{0}$.

  Therefore, in the cycle $\pi_{3*}[\mathcal{Y}_{0}]$, the coefficient
  of $[\P \Orb(\mathcal{X}^{\gamma_{i}}_{0})]$ is
  $\deg(PGL(V) \to\P \Orb(\mathcal{X}^{\gamma_{i}}_{0}))$, i.e.
  $\# \on{Aut}(\mathcal{X}_{0}^{\gamma_{i}})$.  Since
  $\pi_{3*}[\mathcal{Y}_{u}] = \# \on{Aut}(\mathcal{X}_{u})[\P
  \Orb(\mathcal{X}_{u})]$, we deduce that the cycle
  \[\# \on{Aut}(\mathcal{X}_{u})[\P \Orb(\mathcal{X}_{u})] -
    \sum_{i=1}^{n} \# \on{Aut}(\mathcal{X}^{\gamma_{i}}_{0})[\P
    \Orb(\mathcal{X}^{\gamma_{i}}_{0})]\] has an effective
  representative supported on the irreducible components of
  $\pi_{3}(\overline{\mathcal{Y}}_{0})$ not accounted for by the
  orbits of the hypersurfaces $\mathcal{X}^{\gamma_{i}}$. The
  effectivity statement of
  \[\p_{\mathcal{X}_{u}}-\sum_{i=1}^{n}\p_{\mathcal{X}^{\gamma_{i}}_{0}}\]
  now follows.

  Finally, assume that the predegrees of
  $\mathcal{X}^{\gamma_{i}}_{0}$ add to the predegree of
  $\mathcal{X}_{u}$. Let $\Lambda \subset \P \on{Sym}^{d}V^{\vee}$ be
  a general linear space of codimension $\dim PGL(V)$ intersecting
  $\P \Orb(\mathcal{X}_{u})$ and
  $\P \Orb(\mathcal{X}^{\gamma_{i}}_{0})$ transversely for all $i$.
  Then, since $\deg \pi_{3}^{-1}(\Lambda)$ is the sum of predegrees of
  the $\mathcal{X}^{\gamma_{i}}_{0}$, and since this degree is
  constant over $U$, it follows that there are no irreducible
  components $\mathcal{B} \subset \mathcal{Y}_{0}$ other than the
  $\mathcal{A}_{i}$ having the property that $\pi_{3}(\mathcal{B})$ is
  an irreducible component of
  $\pi_{3}(\mathcal{Y})$. Thus,
  \[(\pi_{1},\pi_{3})(\mathcal{Y}) \subset U \times \P
    \on{Sym}^{d}V^{\vee}\] induces
  \[\# \on{Aut}(\mathcal{X}_{u})[\P \Orb(\mathcal{X}_{u})] \leadsto
    \sum_{i}\# \on{Aut}(\mathcal{X}^{\gamma_{i}}_{0})[\P
    \Orb(\mathcal{X}^{\gamma_{i}}_{0})],\] and the equality
  \[\p_{\mathcal{X}_u}=\sum_{i=1}^{n}\p_{\mathcal{X}^{\gamma_{i}}_{0}}\]
  follows.
\end{proof}

For many of our applications, \Cref{principle} will suffice. But in
\Cref{nodedegeneration}, we will find ourselves in a situation that
doesn't quite obey the hypotheses of \Cref{principle}, yet the
conclusion will still follow. Our adjustment is to replace the
compactification $PGL(V) \subset \P \on{End}(V)$ by another variety.
\begin{defn}
\label{def:Inv}
Define
\begin{align*}
  \label{Inv}
  \mathsf{Inv}(V) \subset \P \on{End}(V) \times \P \on{End}(V)
\end{align*}  to be 
the closure of the graph of the inversion morphism
$i: PGL(V) \to PGL(V)$.  In other words, $\mathsf{Inv}(V)$ is the
closure of the set of pairs $(g,g^{-1})$, where $g \in PGL(V)$.
\end{defn}

The group $PGL(V)$ acts on $\mathsf{Inv}(V)$ by {\sl post}-composition
on the first factor and {\sl pre}-composition on the second factor,
i.e. $h \in PGL(V)$ acts on a pair $(e_1, e_2) \in \mathsf{Inv}(V)$ to
give $(h \circ e_{1}, e_{2} \circ h^{-1})$.

The varieties $\mathcal{Y}$, $\mathcal{A}_{\gamma}$, and
\Cref{gamma12} have their immediate generalizations to the
$\mathsf{Inv}(V)$ setting, and we will abuse notation and use the same
letters whenever we are in this analogous setting.

\begin{prin}
\label{principle2}
Let $\gamma_1,\ldots,\gamma_n$ be $1$-parameter families
$\gamma_i: U^{\times}\to PGL(V)$ which are full for $\alpha$ and
continue to let $\gamma_{i}: U \to \mathsf{Inv}(V)$ denote the unique
extension over $U$.  Suppose furthermore that:
\begin{itemize}
\item The points $\gamma_i(0) \in \mathsf{Inv}(V)$ are in distinct
  $PGL(V)$-orbits of $\mathsf{Inv}(V)$.
\end{itemize}
Then, the equivariant class
$\p_{\mathcal{X}_{u}}-\sum_{i=1}^{n}\p_{\mathcal{X}^{\gamma_{i}}_{0}}$
can be represented by a nonnegative sum of effective cycles. If
additionally the predegrees of $\mathcal{X}^{\gamma_{i}}_{0}$,
$i=1, \ldots, n$ sum up to the predegree of $\mathcal{X}_{u}$, then
\[\p_{\mathcal{X}_{u}}=\sum_{i=1}^{n}\p_{\mathcal{X}^{\gamma_{i}}_{0}}.\]
\end{prin}

We omit the proof, as it is similar to the proof of \cref{principle}.

\subsection{Specialize to $r=2$}
\label{sec:specialize2}

From here onward, unless specified otherwise, we assume $V$ is a
$3$-dimensional vector space.  We will use the letter $C$ with
subscripts and superscripts to indicate a plane curve.  Apart from the
specific plane curves with particular subscripts (e.g. ``$C_{BN}$''),
the simple letter $C$ may represent different curves in different
sections.  We hope its meaning will be clear in context.

\section{Known orbit classes of special quartic curves}
\label{stratasection}

Our intention in the section is to get a few critical orbit classes
$\p_{C}$ of curves in our hands without using degeneration. We use
Kazarian's formulas for counting curves with $A_6, D_6$ and $E_6$
singularities. It is known that in the space of quartic curves, the
set of curves with such singularities form three respective full
(i.e. $8$-) dimensional orbits (\Cref{oneorbit}). Kazarian's formulas
then directly yield the equivariant orbit classes of these three
orbits (\Cref{A6E6orbit}). We also record the computation of $\p_{Q}$
where $Q \subset \P \on{Sym}^{4}V^{\vee}$ is a complete quadrilateral,
i.e. the union of four general lines.

We begin with the following calculation of Kazarian \cite[Theorem
1]{Kazarian}:
\begin{prop}
  \label{Kaztheman}
  Let $\mc{S}\to B$ be a smooth morphism of varieties whose fibers are
  smooth surfaces. Let $L$ be a line bundle on $\mc{S}$ and $\sigma$
  be a section of $L$ cutting out a family of curves
  $\mc{C}\subset \mc{S}$. The virtual classes $[Z_{A_6}]$
  (respectively $[Z_{D_6}]$ and $[Z_{E_6}]$) supported on points
  $p\in \mc{S}$ where the fiber of $\mc{C}\to B$ has an $A_6$
  (respectively $D_6$ and $E_6$) singularity at $p$ are given by:
\begin{align*} 
[Z_{A_6}]=&\ u (-c_1 + u) (c_2 - c_1 u + u^2) (720 c_1^4
  - 1248 c_1^2 c_2 + 156 c_2^2 - 1500 c_1^3 u\\& + 1514 c_1 c_2 u+
  1236 c_1^2 u^2 - 485 c_2 u^2 -
  487 c_1 u^3 + 79 u^4)\\
  [Z_{D_6}] =&\ 2 u (-c_1 + u) (4 c_2 - 2 c_1 u + u^2) (c_2 - c_1 u + u^2) (12 c_1^2 - 
   6 c_2 - 13 c_1 u + 4 u^2)\\
  [Z_{E_6}] =&\ 3 u (-c_1 + u) (2 c_1^2 + c_2 - 3 c_1 u + u^2)(4 c_2 - 2
              c_1 u + u^2) (c_2 - c_1 u + u^2)
   \end{align*}
 where $c_i:=c_i(T_{\mc{S}/B})$ and $u=c_1(L)$. 
\end{prop}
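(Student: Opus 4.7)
The plan is to follow Kazarian's Thom polynomial framework for singularities of sections of line bundles on surfaces. Each of $A_6$, $D_6$, and $E_6$ has codimension $6$ in the space of plane curve germs, so the loci $Z_{A_6}, Z_{D_6}, Z_{E_6}$ where the fiber of $\mc{C}\to B$ acquires the corresponding singularity form subvarieties of $\mc{S}$ of expected codimension $6$. Their virtual classes should be universal polynomials in $c_1,c_2$ of $T_{\mc{S}/B}$ and in $u=c_1(L)$, because the singularity type is a closed condition on a finite jet of $\sigma$, and jets of sections are governed by bundles functorially built from $T_{\mc{S}/B}$ and $L$.

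First I would set up the universal model. Let $J^k L$ denote the bundle of $k$-jets of sections of $L$ along fibers of $\mc{S}\to B$ (taking $k=6$ suffices for these three singularities). Each singularity type $\Sigma$ defines a closed subscheme $\Sigma(J^k L)\subset J^k L$ invariant under the group of $k$-jets of local coordinate changes and trivializations of $L$. Since a section $\sigma$ of $L$ induces a section $j^k\sigma\colon\mc{S}\to J^k L$, we get, in the expected codimension and under genericity,
\[
[Z_\Sigma]=(j^k\sigma)^*[\Sigma(J^k L)].
\]
The class $[\Sigma(J^k L)]$ pulls back from the classifying space of the pair (rank $2$ bundle, line bundle), so it is a universal polynomial in $c_1,c_2,u$, homogeneous of weighted degree $6$ with $\deg c_1=\deg u=1$, $\deg c_2=2$.

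Next I would pin down the coefficients by Kazarian's test-surface method. Write a general ansatz for each class as an integer linear combination of the weighted-degree-$6$ monomials in $c_1,c_2,u$. To produce enough independent linear conditions, evaluate both sides on carefully chosen test families $(\mc{S}\to B,L,\sigma)$ for which the number of singular fibers of the given type can be determined directly --- for instance, linear pencils on $\mathbb{P}^2$ and on Hirzebruch surfaces, families over $\mathbb{P}^1$ of plane sections of a fixed surface, or $(\mathbb{C}^*)^2$-equivariant test configurations amenable to Atiyah--Bott localization. Geometrically natural normal forms like $y^2+x^7$ for $A_6$, $x^2y+y^5$ for $D_6$, and $y^3+x^4$ for $E_6$ guide the choice of tests, since their miniversal deformations give models where each singularity can be isolated.

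The main obstacle is twofold. First, one must be careful about excess contributions: the naive closure $\overline{Z_\Sigma}$ may contain more degenerate singularities (e.g.\ $Z_{A_6}$ is adjacent to $E_6,D_6,A_7,\ldots$), and the test-class method must be implemented so that the polynomial genuinely represents the residual class of the stratum of interest. Second, since $A_6$, $D_6$, and $E_6$ all sit in codimension $6$ and their Thom polynomials visibly share the common factor $u(-c_1+u)(c_2-c_1 u+u^2)$ (the Thom polynomial of the simpler adjacent singularity), distinguishing them requires test families whose deformation transverse to the adjacent stratum actually picks out the $A_6$, $D_6$, or $E_6$ direction. In the paper this computation is not redone; it is invoked from \cite[Theorem 1]{Kazarian}, and my proposal simply mirrors the strategy by which Kazarian establishes the three formulas.
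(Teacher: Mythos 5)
The paper gives no proof of this proposition: it is quoted directly from \cite[Theorem 1]{Kazarian}, exactly as you acknowledge at the end of your proposal. Your sketch of the Thom-polynomial/test-class strategy (universal classes pulled back from jet bundles, coefficients pinned down by evaluation on test families, with care about adjacent strata) is a faithful outline of how Kazarian establishes these formulas, so both you and the authors ultimately defer the actual coefficient determination to Kazarian's computation, and there is nothing further to compare.
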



\begin{prop}
\label{oneorbit}
The set of irreducible quartic plane curves with an $A_6$
(respectively $D_6$ and $E_6$) singularity having full dimensional
orbit consists of a single $PGL(V)$ orbit.
\end{prop}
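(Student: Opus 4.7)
The plan is to show, for each $X \in \{A_6, D_6, E_6\}$, that the stratum $S_X \subset \mb{P}^{14}$ of plane quartics carrying an $X$-singularity (and no other singularity) is irreducible, of dimension $8$, and a single $PGL_3$-orbit. Note that for $A_6$ and $E_6$ the singularity is unibranch with $\delta = 3 = p_a$, forcing the quartic to be irreducible and rational; for $D_6$ we have $\delta = 4 > p_a$, so the quartic must be reducible, and a direct branch analysis identifies $S_{D_6}$ with the set of nodal cubics together with a line tangent to one branch of the node (the shape of $C_{D_6}$ in \Cref{quartictable}).

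For the dimension count, I would invoke the standard fact that in $\mb{P}(\on{Sym}^d V^\vee)$ the stratum of plane curves of degree $d$ carrying a simple singularity of Milnor number $\mu$ has codimension $\mu$ when non-empty. Since $\mu(A_6) = \mu(D_6) = \mu(E_6) = 6$ and the explicit examples in \Cref{quartictable} witness non-emptiness, each $S_X$ has dimension $14 - 6 = 8$.

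For irreducibility: in the $A_6$ and $E_6$ cases, I would parametrize $S_X$ as the image (in $\mb{P}^{14}$, modulo $PGL_2$-reparametrization) of the irreducible space of degree-$4$ morphisms $\phi: \mb{P}^1 \to \mb{P}^2$ whose image carries the prescribed singularity at a marked preimage --- the local jet conditions defining the $A_6$ or $E_6$ singularity are open conditions cut out within an irreducible algebraic constraint on the coefficients of $\phi$. For $D_6$, I would use a local branch analysis to rule out other configurations on a quartic (e.g., two lines plus a conic with compatible tangencies) by checking that they always carry additional singularities, and then identify $S_{D_6}$ with a $\mb{P}^1$-bundle over the irreducible locus of pointed nodal cubics.

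Granted irreducibility of $S_X$ and $\dim S_X = 8 = \dim PGL_3$, it suffices for the single-orbit conclusion that every $C \in S_X$ has finite $PGL_3$-stabilizer: its orbit is then open and dense in the irreducible $S_X$, so any two such orbits coincide. Finiteness of the stabilizer can be read off the automorphism counts $(3,3,2)$ of \Cref{quartictable} for the chosen representatives, or argued abstractly from the fact that any projective automorphism must fix the unique $X$-singularity and preserve its analytic normal form, cutting the stabilizer down to a finite group. The main obstacle is the irreducibility step, specifically the local branch analysis required to identify $S_{D_6}$ uniquely with the nodal-cubic-plus-tangent-line configuration.
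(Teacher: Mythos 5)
Your route is genuinely different from the paper's. The paper outsources both irreducibility and the codimension count for the $A_6$ and $E_6$ strata to the classification in \cite[Section 3.4]{Nejad2}, and then certifies that the generic orbit is $8$-dimensional by observing that the Aluffi--Faber predegree formulas \cite[Examples 5.2 and 5.4]{AF00} give a nonzero answer (a zero-dimensional orbit closure would have predegree $0$ by \Cref{predegree}); for $D_6$ it gives the same one-line branch argument you give. Your plan replaces the citation by a direct parametrization of the $A_6$/$E_6$ strata via degree-$4$ maps $\mb{P}^1\to\mb{P}^2$ with jet conditions, and replaces the predegree test by a stabilizer computation. That is more self-contained, and the observation that $\delta=3=p_a$ forces irreducibility and rationality in the $A_6$/$E_6$ cases while $\delta=4$ forces reducibility for $D_6$ is a nice clarification the paper leaves implicit. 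Two caveats on the dimension count: the ``codimension $=\mu$'' fact is only automatic when the linear system induces a versal deformation of the singularity, which is not free for quartics carrying a $\mu=6$ singularity and should be checked (or replaced by the tautological bound $\operatorname{codim}\le\mu$ combined with an upper bound on $\dim S_X$ coming from your parametrization, which in fact hands you the dimension for free in the $A_6$/$E_6$ cases).

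The genuine gap is in the single-orbit step. You correctly identify that you need \emph{every} $C\in S_X$ to have finite stabilizer (so that every orbit is open in the irreducible $8$-dimensional $S_X$, whence there is only one). But the two justifications you offer do not deliver this: reading $\#\on{Aut}$ off \Cref{quartictable} only covers the chosen representatives, which is circular before you know $S_X$ is a single orbit; and the abstract claim that fixing the unique singular point and preserving its analytic normal form forces a finite stabilizer is false in general --- the cuspidal cubic, the conic plus tangent line, and three concurrent lines all have a unique distinguished singular point yet positive-dimensional stabilizers (see the table in \Cref{sec:cubicplanecurves}). An actual argument is needed, of the kind the paper carries out for $C_{E_6}$ in \Cref{E6}, where finiteness of the stabilizer is extracted from auxiliary projective data (the two simple flexes) rather than from the singularity alone. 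Either supply such an argument uniformly over $S_X$, or fall back on the paper's device: a single representative with nonzero predegree has an $8$-dimensional orbit, which is then open and dense in $S_X$, and the remaining points of $S_X$ must be handled by the classification (a single normal form up to projective equivalence), which is exactly what the citation to \cite{Nejad2} is doing for the paper.
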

\begin{proof}
  The case of $D_6$ singularities is clear, since one of the branches
  of the singularity must be a line, and therefore $D_6$ is the union
  of a nodal cubic along with one of the branch lines at the node.
  Hence such a curve must be the union of a nodal cubic with a tangent
  branch line, constituting a single orbit.

  The fact that irreducible plane quartics with an $A_6$ or $E_6$
  singularity form an irreducible subvariety of codimension $6$ in the
  projective space $\mb{P} \on{Sym}^{4}V^{\vee}$ of all quartics
  follows from explicit classification, for example \cite[Section
  3.4]{Nejad2}. 

  That an orbit of a general curve with such a singularity is
  $8$-dimensional can be checked by the formulas for their pre-degrees
  as found in \cite[Examples 5.2 and 5.4]{AF00}, which gives a nonzero
  result. This proves the proposition.
\end{proof}

\begin{defn}
  \label{def:specialcurves}
  Let $C_{A_6}$ and $C_{E_6}$ denote the rational quartic curves with
  an $A_6$ and $E_6$ singularity, respectively, which have full
  dimensional orbits. By \Cref{oneorbit}, this definition is
  well-defined up to projective equivalence.
\end{defn}

There are explicit equations for $C_{A_6}$ and $C_{E_6}$ (see for
example \cite[Section 3.4]{Nejad2}):
\begin{align}
  \label{eq:sings}
  C_{A_6}: (X^2+YZ)^2+2YZ^3=0\\
  C_{D_6}: Z(ZXY+X^3+Z^3)=0\\
  C_{E_6}: Y^3Z+X^4+X^2Y^2=0.
\end{align}

\begin{cor}
\label{A6E6orbit}
We have
\begin{align*}
  \p_{C_{A_6}}&= 3\cdot112 (9 c_1^3 + 12 c_1 c_2 - 11 c_3) (2 c_1^3 + c_1 c_2 + c_3)\\
  \p_{C_{D_6}}&=3\cdot 64 (18 c_1^6 + 33 c_1^4 c_2 + 12 c_1^2 c_2^2 - 85 c_1^3 c_3 - 11 c_1 c_2 c_3 - 
                7 c_3^2)\\
  \p_{C_{E_6}}&= 2\cdot 48 (2 c_1^3 + c_1 c_2 + c_3) (9 c_1^3 - 6 c_1 c_2 + 7 c_3),
\end{align*}
where $\#\on{Aut}(C_{A_6})=\# \on{Aut}(C_{D_6})=3$ and
$\#\on{Aut}(C_{E_6})=2$.
\end{cor}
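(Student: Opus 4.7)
The plan is to deduce these three identities directly from Kazarian's formulas (\Cref{Kaztheman}) applied to the universal family of plane quartics, combined with \Cref{oneorbit}. Set up the universal situation by taking a base $B$ approximating $BGL_3$, the rank-3 tautological bundle $\mc{V}$ on $B$, and the projective bundle $\pi: \mb{P}(\mc{V}) \to B$, whose fibers are the smooth surfaces $\mb{P}^2$. The bundle $\mc{E} = \on{Sym}^4 \mc{V}^\vee$ parametrizes quartic forms; pulling everything back to $\mb{P}(\mc{V}) \times_B \mc{E}$, there is a tautological section of $L = \mc{O}_{\mb{P}(\mc{V})}(4)$ cutting out the universal quartic $\mc{C}$. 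Kazarian's formula then yields explicit polynomial classes $[Z_{A_6}], [Z_{D_6}], [Z_{E_6}]$ supported on the corresponding singular loci of $\mc{C}$.

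The first step is to compute $c_1, c_2$ of $T_{\pi}$ and $u = c_1(L) = 4h$ (with $h = c_1(\mc{O}_{\mb{P}(\mc{V})}(1))$) in terms of $h$ and the chern roots of $\mc{V}$, using the Euler sequence $0 \to \mc{O} \to \mc{V} \otimes \mc{O}_{\mb{P}(\mc{V})}(1) \to T_\pi \to 0$. Substituting these into Kazarian's expressions yields polynomials in $c_1(\mc{V}), c_2(\mc{V}), c_3(\mc{V})$ and $h$. The second step is to push forward along $\mb{P}(\mc{V}) \times_B \mc{E} \to \mc{E}$, which amounts to extracting the coefficient of $h^2$ (Segre pushforward along a $\mb{P}^2$-bundle), giving a class in $A^{\bullet}_{GL_3}(\mc{E})$.

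The third step is to identify the resulting pushforward with the equivariant orbit class. An $A_6$, $D_6$, or $E_6$ singularity on a quartic occurs at a unique point of the curve (the singularities are either unibranch or contain a unique two-branched point), so the projection from Kazarian's incidence locus to the singular locus in $\mc{E}$ is generically injective; and by \Cref{oneorbit} (together with the fact that reducible quartics cannot produce a unibranch $A_6$ or $E_6$, and the analysis in the $D_6$ case forcing a linear branch), that image is precisely the closure of the relevant $GL_3$-orbit $(O_C)_\mc{V}$. Hence the pushforward class equals $[O_C]_{GL_3}$. Finally, the automorphism counts $\#\on{Aut}(C_{A_6}) = \#\on{Aut}(C_{D_6}) = 3$ and $\#\on{Aut}(C_{E_6}) = 2$ are verified by computing the projective stabilizer of each given normal form, essentially reducing to automorphisms fixing the singular point and its distinguished tangent data; multiplying by these numbers produces $p_C$.

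The main obstacle I anticipate is the bookkeeping in the pushforward: Kazarian's polynomials have degree $8$ in mixed chern classes, and after substituting the Euler-sequence expressions the intermediate polynomial in $h$ is lengthy, so one must expand and then collect the coefficient of $h^2$ carefully (the other $h$-powers either vanish or contribute to higher Segre terms). A secondary point to check is transversality of the universal section $\sigma$ in the sense required by \Cref{Kaztheman}, which follows from the smoothness and high dimensionality of the universal setting and the genericity of the approximation to $BGL_3$. With these in hand, matching the pushforward against the stated polynomials is a routine but careful computation.
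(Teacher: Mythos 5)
Your proposal follows essentially the same route as the paper: apply Kazarian's formulas (\Cref{Kaztheman}) with $B=\mb{G}(2,N)$, $\mc{S}=\mb{P}(\mc{V})$, compute $c_i(T_\pi)$ from the relative Euler sequence, set $u=4c_1(\ms{O}_{\mb{P}(\mc{V})}(1))$, push forward to $B$, identify the result with the orbit class via \Cref{oneorbit}, and read off the automorphism counts from the normal forms. Your extra care about the universal family over $\on{Sym}^4\mc{V}^\vee$ and the generic injectivity onto the orbit closure only makes explicit what the paper leaves implicit.
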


We will also verify the result for $\p_{C_{D_6}}$ independently in
\Cref{CAN}.

\begin{proof}
  We apply \Cref{Kaztheman} to the case where $B$ is an arbitrary base
  variety and $\mc{V} \to B$ is an arbitrary rank $3$ sub-bundle. Let
  $T$ be the relative tangent bundle of $\mb{P}(\mc{V})\to B$. By the
  splitting principle and the relative Euler exact sequence for
  projective bundles, we get:
\begin{align*}
  c_1(T)&= c_1(\mc{V}) + 3c_1(\ms{O}_{\mb{P}(\mc{V})}(1)) \\
  c_2(T) &= c_2(\mc{V}) + 2 c_1(\mc{V}) c_1(\ms{O}_{\mb{P}(\mc{V})}(1)) + 3 c_1(\ms{O}_{\mb{P}(\mc{V})}(1))^2.
\end{align*}
Now, we substitute $u=4c_1(\ms{O}_{\mb{P}(\mc{V})}(1))$ in the
formulas for $[Z_{A_6}]$, $[Z_{D_6}]$ and $[Z_{E_6}]$ \Cref{Kaztheman}
and apply push-forward along the projection
$\mb{P}(\mc{V})\xrightarrow{\pi} B$. This yields
\begin{align*}
  \pi_{*}[Z_{A_6}]=&\ 112 (9 c_1(\mc{V})^3 + 12 c_1(\mc{V}) c_2(\mc{V}) - 11 c_3(\mc{V})) (2 c_1(\mc{V})^3 + c_1(\mc{V}) c_2(\mc{V}) + c_3(\mc{V}))\\
  \pi_{*}[Z_{D_6}]=&\ 64 (18 c_1(\mc{V})^6 + 33 c_1(\mc{V})^4 c_2(\mc{V}) + 12 c_1(\mc{V})^2 c_2(\mc{V})^2 - 85 c_1(\mc{V})^3 c_3(\mc{V}) \\
  & - 11 c_1(\mc{V}) c_2(\mc{V}) c_3(\mc{V}) - 7 c_3(\mc{V})^2)\\
  \pi_{*}[Z_{E_6}]=&\ 48 (2 c_1(\mc{V})^3 + c_1(\mc{V}) c_2(\mc{V}) + c_3(\mc{V})) (9 c_1(\mc{V})^3 - 6 c_1(\mc{V}) c_2(\mc{V}) + 7 c_3(\mc{V})).
\end{align*}
Now, $\pi_{*}[Z_{A_6}]$, $\pi_{*}[Z_{D_6}]$, and $\pi_{*}[Z_{E_6}]$
respectively give the formulas for $[\Orb({C_{A_6}})]_{GL(V)}$,
$[\Orb({C_{D_6}})]_{GL(V)}$, $[\Orb({C_{E_6}})]_{GL(V)}$, as they are
also the result of pulling back the relative cycles
$(\Orb({C_{A_6}}))_{\mc{V}}$, $(\Orb({C_{D_6}}))_{\mc{V}}$, and
$(\Orb({C_{E_6}}))_{\mc{V}}$ under a generic section
$B\to \on{Sym}^4\mc{V}^{\vee}$.

The statement on the automorphisms of $C_{A_6}$ and $C_{E_6}$ come
from a direct analysis of equations. Alternatively, one could compare
the predegrees of $C_{A_6}$ and $C_{E_6}$ with the projective versions
of $[Z_{A_6}]$ and $[Z_{E_6}]$ using \cite[Examples 5.2 and 5.4]{AF00}
and \Cref{projectivethom}.
\end{proof}

In order to calculate the orbit class of a general quartic with a
triple point, we will need to know $\p_Q$ in the case where $Q$ is the
union of four lines with no three concurrent, i.e. a complete
quadrilateral.  The method is simply to ``present'' the orbit closure
$\P \Orb(Q)$ by a more accessible variety.

\begin{prop}
\label{fourlines}
Let $Q$ be the union of four lines, no three concurrent. Then,
\begin{align*}
  \p_Q = 24\cdot 16 (18 c_1^6 + 33 c_1^4 c_2 + 12 c_1^2 c_2^2 + 131 c_1^3 c_3 + 
  153 c_1 c_2 c_3 - 147 c_3^2).
\end{align*}
Here, $\#\on{Aut}(Q)=24$.
\end{prop}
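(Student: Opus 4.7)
My plan is to compute $p_C$ by realizing $O_C$ as the image of a ``multiplication of linear forms'' morphism and pushing forward the fundamental class of the source. For the automorphism count, observe that by projective duality, four lines with no three concurrent correspond to four points in $(\P^2)^\vee$ with no three collinear; since $PGL_3$ is simply transitive on ordered $4$-tuples of such points, every permutation of the four lines lifts to a projective automorphism, so $\on{Aut}(C) \cong S_4$ and $\#\on{Aut}(C) = 24$.

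Working over the fixed base $B$ with rank-$3$ bundle $\mc{V}$, I would form the relative product $\mc{Y} := \mb{P}(\mc{V}^\vee)^{\times_B 4}$ with structure map $\pi: \mc{Y}\to B$, and write $h_i$ for the pullback of the tautological hyperplane class from the $i$-th factor. Composing the tensor product of the tautological sub-line-bundles with the symmetrization $(\mc{V}^\vee)^{\otimes 4}\to\on{Sym}^4\mc{V}^\vee$ yields a sub-line-bundle $\mc{L}^{-1} := \bigotimes_i \ms{O}_{(i)}(-1) \hookrightarrow \pi^*\on{Sym}^4\mc{V}^\vee$, and hence a morphism
\begin{align*}
  \mu : \on{tot}(\mc{L}^{-1}) \to \on{Sym}^4\mc{V}^\vee
\end{align*}
of total spaces over $B$. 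This is the algebraic incarnation of the multiplication map $(\ell_1,\ldots,\ell_4)\mapsto \ell_1\ell_2\ell_3\ell_4$, after quotienting out the codimension-$1$ subtorus of $\mb{G}_m^4$ that rescales the $\ell_i$ while preserving their product. Over the open locus of products of four distinct linear forms, $\mu$ is $|S_4|$-to-$1$ onto $O_C$, so by the projection formula
\begin{align*}
  p_C \;=\; 24\,[O_C]_{GL_3} \;=\; \mu_*[\on{tot}(\mc{L}^{-1})] \;=\; \pi_*\bigl(c_{14}(Q)\bigr),
\end{align*}
where $Q := \pi^*\on{Sym}^4\mc{V}^\vee/\mc{L}^{-1}$ is the rank-$14$ quotient bundle and the last equality comes from recognizing $\on{tot}(\mc{L}^{-1}) \subset \on{tot}(\pi^*\on{Sym}^4\mc{V}^\vee)$ as the pullback of the zero section of $\on{tot}(Q) \to \mc{Y}$.

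What remains is a direct Chern-class computation. I would expand $c(Q) = c(\pi^*\on{Sym}^4\mc{V}^\vee)/(1-\ell)$ with $\ell = h_1+\cdots+h_4$, express the Chern classes of $\on{Sym}^4\mc{V}^\vee$ in terms of $c_1,c_2,c_3$ via the splitting principle, and then perform the four successive projective-bundle pushforwards using the relation $h_i^3 - c_1 h_i^2 + c_2 h_i - c_3 = 0$ on each factor. The main obstacle is purely the volume of the resulting symbolic manipulation, which is best delegated to computer algebra rather than carried out by hand. A conceptually cleaner alternative is to invoke \cite{FML}: since $C$ is a generic $4$-line arrangement in $\P^2$, the desired formula is a specialization of the hyperplane arrangement orbit classes computed in the authors' earlier joint work with Spink.
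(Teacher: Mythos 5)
Your proposal is correct and is essentially the paper's own argument: the paper uses the projectivized multiplication map $\phi: \mb{P}(\mc{V}^{\vee})^{\times_B 4}\to \mb{P}(\on{Sym}^4\mc{V}^{\vee})$, which is $4!$-to-$1$ onto $\mb{P}O_C$, and the integration trick of \cite[Theorem 3.1]{FNR06} against $\alpha = H^{14}+c_1(\on{Sym}^4\mc{V}^{\vee})H^{13}+\cdots+c_{14}(\on{Sym}^4\mc{V}^{\vee})$. After the projection formula this is exactly your $\pi_*\bigl(\sum_j c_j(\on{Sym}^4\mc{V}^\vee)\,(h_1+\cdots+h_4)^{14-j}\bigr)=\pi_*(c_{14}(Q))$, so the affine total-space formulation and the projective one coincide.
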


\begin{proof}
  We will closely follow the ideas in \cite[Theorem 3.1]{FNR06}. Let
  $\mathcal{V} \to B$ be an arbitrary rank $3$ vector bundle. Consider
  the map
  $$\phi: \mb{P}(\mc{V}^{\vee})^4\to \mb{P}(\on{Sym}^4\mc{V}^{\vee}),$$
  which, fiber by fiber over $B$, restricts to the map induced by
  sending a tuple of linear forms $(L_1, \dots, L_{4})$ to the quartic
  form $L_1 \cdot L_2, \cdot L_{3} \cdot L_{4}$ . Then, $\phi$ maps
  $4!$ to 1 onto $\mb{P}\Orb(Q)_{\mathcal{V}}$, so
  $[\mb{P}\Orb(Q)]_{\mathcal{V}}=\frac{1}{24}\phi_{*}(1)$.

  Let $H=c_{1}\ms{O}_{\mb{P}\on{Sym}^4\mc{V}^{\vee}}(1)$ and set
\begin{align*}
  \alpha := H^{14}+c_1(\on{Sym}^4\mc{V}^{\vee})H^{13}+\cdots+c_{14}(\on{Sym}^4\mc{V}^{\vee}). 
\end{align*}
The Leray relation states that
$\alpha H + c_{15}(\on{Sym}^4\mc{V}^{\vee})=0$, and it follows from
this that the integral
\begin{align*}
  \int_{\mb{P}(\on{Sym}^4\mc{V}^{\vee})\to B}\alpha \cdot \beta
\end{align*}
returns the ``constant term'' (with respect to $H$) of any class
$\beta$. By this, we mean that any class
$\beta\in A^{\bullet}(\mb{P}(\on{Sym}^4\mc{V}^{\vee})$ can be written
as a polynomial in $H$ of degree at most $14$, with coefficients being
pullbacks of classes of $A^{\bullet}(B)$ and that integrating against
$\alpha$ and pushing forward to $B$ extracts the $H^{0}$ or constant
term of $\beta$.  (We use the notation $\int_{X \to Y}$ to denote
pushforward of classes along a map $X \to Y$.)

To finish, we choose $\beta :=\frac{1}{24}\phi_{*}(1)$ and apply the
push-pull formula to reduce our problem to the evaluation of
\begin{align*}
    \frac{1}{24}\int_{\mb{P}(\mc{V}^{\vee})^4\to B}\phi^{*}(\alpha).
\end{align*}
This evaluation is now standard (and we leave it to the reader), given
that $\phi^{*}H = h_1 + h_2 + h_3 + h_4$, where $h_{i}$ is the
pullback of the relative hyperplane class under the projection
$p_{i}: \P(\mc{V}^{\vee})^{4} \to \P(\mc{V}^{\vee})$ onto the $i$-th
factor.  The end result yields $\p_Q$ as stated in the proposition, in
light of \Cref{projectivethom}.
\end{proof}


\section{Degeneration I: Splitting off a line}

In this section, we analyze our first degeneration: We investigate how
the orbit closure specializes as a degree $d$ smooth curve specializes
to a general degree $e$ smooth curve together with $d-e$ general
lines. Let $U = \mb{A}^{1}$, with coordinate $t$ vanishing at $0$. We
will follow the terminology and framework of
\Cref{degenerationsection}.

\subsection{Framework}
\label{sec:frameworksplit}

Let $F(X,Y,Z)$ and $G(X,Y,Z)$ be forms of degrees $d-1$ and $d$
respectively, and assume $G$ does not vanish identically on the line
$\{X=0\}$. Let \[\alpha: U \to \P \on{Sym}^{d}V^{\vee}\] be given by
the formula $t \mapsto XF + tG$. Finally, let
\[\gamma: U^{\times} \to PGL(V)\] be the $1$-parameter family of
matrices
\[\begin{pmatrix}
    t^{-1} & 0 & 0 \\
    0 & 1 & 0\\
    0 & 0 & 1\\
  \end{pmatrix}.\] 

We maintain this framework throughout the rest of the section.

\begin{lem}
\label{line}
In the setting of \Cref{sec:frameworksplit}, the curve
$\alpha^{\gamma}(0) \in \P \on{Sym}^{d}V^{\vee}$ has equation
$X \cdot F(0,Y,Z)+G(0,Y,Z) = 0$.

\end{lem}

\begin{proof}
 Unraveling the definition of the family of curves
 $\alpha^{\gamma}: U \to \P \on{Sym}^{d}V^{\vee}$, it suffices to
 prove
 \begin{align*}
   \lim_{t\to 0}t^{-1}((tX)F(tX,Y,Z)+tG(tX,Y,Z))&=X \cdot F(0,Y,Z)+G(0,Y,Z),
\end{align*}
which is immediate.
\end{proof}

Let us interpret \Cref{line} in geometric terms, under the further
condition that $F$ and $G$ are suitably general.  The original family
$\alpha$ represents a general degree $d$ curve $G = 0$ specializing to
a reducible curve containing a line, $X \cdot F = 0$.  Upon twisting
by $\gamma$, the new family $\alpha^{\gamma}$ now specializes the same
general curve to the curve $X \cdot F(0,Y,Z) + G(0,Y,Z)$, which is a
{\sl general} curve among those possessing a multiplicity $(d-1)$
singular point.

\begin{rmk}
  \label{rmk:split}
  Notice that the limiting endomorphism $\gamma(0) \in \P \on{End}(V)$
  is given by the matrix
\[\begin{pmatrix}
    1 & 0 & 0 \\
    0 & 0 & 0\\
    0 & 0 & 0\\
  \end{pmatrix}\] and therefore the kernel space of $\gamma(0)$ is the
line $X=0$ which was a component of the curve $\alpha(0)$.
\end{rmk}

We can slightly generalize the analysis in \Cref{line} to the
situation where a general degree $d$ curve $C$ specializes to a curve
$D$ (via a map $\beta: U \to \P \on{Sym}^{d}V^{\vee}$), where $D$ is
the union of a general degree $e \geq 1$ curve along with $d-e$
generally chosen lines $L_1, \dots, L_{d-e}$.  By adapting the family
of matrices $\gamma$ to this more general situation, we obtain
$\gamma_{i}: U^{\times} \to PGL(V)$, one per each line $L_{i}$,
satisfying:
\begin{enumerate}
\item $\beta^{\gamma_{i}}(0)$ is a degree $d$ curve general among
  those with a multiplicity $d-1$ point, and
\item the kernel of the endomorphism $\gamma_{i}$ is the line
  $L_{i} \subset \P V$. (see \Cref{rmk:split}.)
\end{enumerate}

\begin{prop}
\label{linespecialization}
Assume $d\geq 4$ and let $C$ be a general degree $d$ curve, $C_{d-1}$
a general degree $d$ curve possessing a point of multiplicity $d-1$,
and $D$ the union of a general degree $e \geq 0$ curve together with
$d-e$ general lines. Then,
\begin{align*}
  \p_{C} =  (d-e)\p_{C_{d-1}}+ \p_{D}.
\end{align*}
\end{prop}

\begin{proof}
  This will be a direct application of \Cref{principle}.  Let
  $\beta: U \to \P \on{Sym}^{d}V^{\vee}$ and $\gamma_{i}$
  $i=1, \dots, d-e$ be as in the generalized setup immediately prior
  to the statement of the proposition.

  The hypotheses of \Cref{principle} are met:

\begin{enumerate}
\item $C_{d-1}$ has full dimensional orbit, as does $D$, (thanks to
  the assumption $d \geq 4$).
\item The kernels of $\gamma_{i}(0)$ are
  distinct, namely the lines $L_{i}$.

\item It remains to check the compatibility of predegrees.  For this,
  (by further specializing $D$ to a union of $d$ general lines) it
  suffices to consider the case $e=0$, where now the task remaining is
  to check that the predegree of a general degree $d$ curve $C$ is $d$
  times the predegree of $C_{d-1}$, a general degree $d$ curve with a
  point of multiplicity $d-1$ plus the predegree of the union of $d$
  general lines. Finally, this last check is accomplished by plugging
  into the formulas in \cite[Examples 3.1, 4.2]{AF00} and \cite{AF93}.
\end{enumerate}
The proposition follows from \Cref{principle}.
\end{proof}

\subsection{Summary}
The proof of \Cref{linespecialization} provides a comprehensive
understanding of the $t \to 0$ flat limit of orbit closures
$\P \Orb(C_{t})$ , $t \in U$ if $C_{t}$ is a family of general curves
specializing to a curve $D$ general among those which contain $d-e$
lines as components. Apart from $\P \Orb(D)$, we find $d-e$ other
orbits $\P \Orb(C_{i})$, where $C_{i}$ are general among curves
possessing a $d-1$-fold point.


\section{Degeneration II: Acquiring nodes and cusps}
\label{sec:nodecusp}

In this section, we establish the effect of acquiring an ordinary node
or cusp (a cusp is a singularity with analytic equation $y^{2}=x^{3}$)
on the polynomial $\p_{C}$, in the case of arbitrary $d \geq 4$.  A
node singularity $p$ of a plane curve $C$ is called {\sl ordinary} if
both tangent lines intersect $C$ with multiplicity $3$ at
$p$. Similarly, we call a cusp singularity ordinary if no line meets
it with multiplicity $\geq 4$.  Throughout, $U$ will denote an
appropriate open neighborhood of $0$ in $\mb{A}^{1}$ and $t$ will
denote a coordinate around $0$. We let $R$ denote the coordinate ring
of $U$, and we let $v: R \to \mb{N}$ denote the valuation
corresponding to the point $0 \in U$.

\subsection{Framework and summary of main results}
\label{sec:frameworknodecusp}

We assume \[\alpha: U \to \P \on{Sym}^{d}V^{\vee}\] induces a family
of curves $\pi: \mathcal{C} \to U$ with the following properties:
  \begin{enumerate}
  \item The curve $C_{u} := \pi^{-1}(u)$, $u\in U$ general, is a
    smooth curve with no hyperflexes, and
  \item the curve $C_{0} := \pi^{-1}(0)$ has $\delta$ ordinary nodes
    and $\kappa$ ordinary cusps, and
  \item $\mathcal{C}$ is a smooth surface, and $C_{0}$ has no
    hyperflexes.
  \end{enumerate}

  Certain curves of special significance will arise, so we collect
  their definitions here.

  \begin{defn}
    \label{specialcurves} Define the curves $C_{BN}$, $C_{AN}$,
    $C_{\on{flex}}[j]$, $j \in \k$ as:
    \begin{enumerate}
    \item $C_{BN}: Z^{d-3}(XYZ + X^3 + Z^3) = 0$\\
    \item $C_{AN}: Z^{d-3}(Y^2Z - X^3 + X^2Z) = 0$\\
    \item $C_{\on{flex}}[j]$: This is the union of a smooth cubic
      curve with $j$-invariant $j$ along with one of its flex lines,
      the line taken with multiplicity $(d-3)$.
    \end{enumerate}
  \end{defn}
 
  Although these curves depend on $d$, we have suppressed it from the
  notation, and hope $d$ is clear from context.  In geometric terms,
  $C_{BN}$ is a nodal cubic union a $d-3$ line tangent to one of the
  branches at the node, taken with multiplicity $d-3$. $C_{AN}$ is
  similarly a nodal cubic curve along with one of its three flex lines
  (at a smooth point), the line taken with multiplicity $(d-3)$.

  In fact, it will turn out that $\p_{C_{\on{flex}}[j]}$ is
  independent of $j \in \k$.  This follows from the following
  proposition, which is proven in \Cref{proof:CANCBN} below.

  \begin{prop}
    \label{prop:CANCBN}
    Keep the setting above. For every $j \in \k$,
  \[\# \on{Aut}(C_{\on{flex}}[j]) \cdot [\P \Orb(C_{\on{flex}}[j])]
    \leadsto \#\on{Aut}(C_{AN})[\P \Orb(C_{AN})] +
    \#\on{Aut}(C_{BN})[\P \Orb(C_{BN})]\] and therefore
  \[\p_{C_{\on{flex}}[j]} = \p_{C_{AN}} + 2\p_{C_{BN}}.\]
\end{prop}

The main result of this section, however, is to prove:

\begin{thm}
  \label{nodecusp}
  
  Assume the setting above. Then
\begin{align}
  \p_{C_{u}} = \p_{C_{0}} + 2\delta  \cdot \p_{C_{BN}} + \kappa \cdot   \p_{C_{\on{flex}}[j]}.
\end{align}
\end{thm}

\subsection{Degeneration to a node}
\label{nodedegeneration}

In this subsection, we assume
\[\alpha_{\on{node}}: U \to \P \on{Sym}^{d}V^{\vee}\] is a family of
curves cut out by a degree $d$ form $F(X,Y,Z)$ with coefficients in
$R$ the coordinate ring of $U$, with the properties:

\begin{enumerate}
\item The curve $C_{0} := \alpha_{\on{node}}(0)$ has an ordinary node
  at $[0:0:1] \in \P V$ with branch lines $X=0$ and $Y=0$, and
\item The total space $\mathcal{C}$ of the family of curves is smooth
  at the node of $C_{0}$.  
\end{enumerate}

In what follows, to ease exposition, rather than making a base changes
$s^n = t$, we will abuse notation and work with the fractional powers
$t^{1/n}$. We also extend the valuation $v$ to such fractional
expressions in the obvious way.

\begin{lem}
\label{nodematrix}
Maintain the setup immediately prior, and let $\gamma_{1}(t)$ denote
the family of matrices
\[ \begin{pmatrix}
    t^{-1/3} & 0 & 0\\
    0 & t^{-2/3} & 0\\
    0 & 0 & 1\\
  \end{pmatrix}.\]

Then, the limiting plane curve
$\alpha_{\on{node}}^{\gamma_{1}}(0) \in \P \on{Sym}^{d}V^{\vee}$ has
equation
\begin{align*}
  \lim_{t\to 0}{t^{-1}(F(t^{\frac{1}{3}}X,t^{\frac{2}{3}}Y,Z))}.
\end{align*}
This curve is projectively equivalent to $C_{BN}$.

Similarly, if $\gamma_{2}(t)$ is the family of matrices
\[ \begin{pmatrix}
    t^{-2/3} & 0 & 0\\
    0 & t^{-1/3} & 0\\
    0 & 0 & 1\\
  \end{pmatrix}\] then $\alpha_{\on{node}}^{\gamma_{2}}(0)$ is the
curve defined by
\begin{align*}
    \lim_{t\to 0}{t^{-1}(F(t^{\frac{2}{3}}X,t^{\frac{1}{3}}Y,Z))}
\end{align*}
and is also projectively equivalent to $C_{BN}$.
\end{lem}

\begin{proof}
  It suffices to just do one of the two cases -- we will prove the
  $\gamma_{2}$ case. We know the following:
  \begin{enumerate}
  \item The coefficient $a_{ij}$ of each monomial $X^iY^jZ^{d-i-j}$
    occurring in $F(X,Y,Z)$ is an element of $R$, and therefore have
    non-negative integer valuations.

  \item The assumptions on $C_0$ in the description of
    $\alpha_{\on{node}}$ imply:
    $$v(a_{0,0}),v(a_{1,0}),v(a_{0,1}),v(a_{2,0}),v(a_{0,2})\geq 1.$$

  \item Since the node singularity of $C_0$ is assumed to be ordinary,
    $v(a_{3,0})=v(a_{0,3})=0$.

  \item Since $\mathscr{C}$ is smooth at the node, it follows that
    $v(a_{0,0})=1$.
  \end{enumerate}

  Given these constraints, a direct check now shows that
  $\frac{2}{3}i+\frac{1}{3}j-1+v(a_{i,j})$ is zero if and only if
  $(i,j)\in \{(0,3),(1,1),(0,0)\}$ and is strictly positive otherwise.
  Therefore the result of substituting $t=0$ into the expression in
  the limit produces a nodal cubic with $(d-3)$-fold branch line, as
  claimed.
\end{proof}

\begin{rmk}
  \label{D6CBN}
  In the case $d=4$, any curve $C_{D_{6}}$ (\Cref{def:specialcurves})
  is equal to the curve $C_{BN}$.
\end{rmk}

\begin{rmk}
  \label{rmk:limitendoNode}
  In the context of \Cref{nodematrix}, observe that the limiting
  endomorphism $\gamma_{1}(0)$ has kernel given by the line $Y=0$,
  which is one of the branches of the node in $C_0$ while the limiting
  endomorphism $\gamma_{2}(0)$ has kernel given by the line $X=0$,
  which is the other branch.  Meanwhile, the $t \to 0$ limit of both
  $\gamma_{i}^{-1}$ have images equal to the node point $[0:0:1]$.
\end{rmk}

\subsection{The degree of $\P \Orb(C_{BN})$}
In light of \Cref{nodecusp}, in order to employ the strategy implicit
in \Cref{principle} we will need to compute the degree of the orbit
closure $\P \Orb(C_{BN})$.  In principle, this can be deduced by
applying the algorithm of Aluffi and Faber in \cite{AF00}.  We provide
an independent calculation in this section, as an extra check.

\begin{prop}\label{prop:predegreepolyCBN}
  Let $d \geq 4$. As a function of the degree $d$, the degree of
  $\P \Orb(C_{BN})$ is the quadratic polyonomial
  $24 + 144\cdot (d-3) + 140 \cdot (d-3)^{2}$. The predegree of
  $C_{BN}$ is $3(24 + 144\cdot (d-3) + 140 \cdot (d-3)^{2})$.
\end{prop}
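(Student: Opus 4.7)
\emph{Plan of proof.} The plan is to construct an explicit birational parametrization of the orbit closure $\mathbb{P}O_{C_{BN}}$ and compute its degree by pushforward. The fact that the answer is a quadratic polynomial in $d-3$ already suggests a parametrization in which $d-3$ enters only through a single line-bundle twist on a threefold base.

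First I would build a parameter space $X$ for pairs $(E,\ell)$, where $E$ is a nodal cubic and $\ell$ is a line tangent to one of the two branches of $E$ at its node. Let $Y=\{(p,\ell)\in\mathbb{P}^{2}\times\mathbb{P}^{2\vee}:p\in\ell\}$ be the flag variety, a smooth threefold with Chow ring $\mathbb{Z}[h,\xi]/(h^{3},\xi^{2}+3h\xi+3h^{2})$ and $H_{L}=\xi+2h$. Over $Y$, the cubic forms that are singular at $p$ with $\ell$ a tangent line at the node form the fibre of a rank-$6$ subbundle $\mathcal{E}\subset W\otimes\mathcal{O}_{Y}$ of the trivial bundle with fibre $W=H^{0}(\mathbb{P}^{2},\mathcal{O}(3))$: three linear conditions impose a singularity at $p$ and one more specifies the branch. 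Set $X=\mathbb{P}(\mathcal{E})$, an $8$-dimensional smooth variety, and consider the map $\Phi\colon X\to\mathbb{P}^{\binom{d+2}{2}-1}$ sending $([F],(p,\ell))\mapsto[F\cdot L^{d-3}]$ for a linear form $L$ cutting out $\ell$. Then $\Phi$ is birational onto $\mathbb{P}O_{C_{BN}}$: from a general image one recovers $L^{d-3}$ as the unique linear factor of high multiplicity, the residual cubic $F$, and the branch tangent to $L$.

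To compute the degree I would expand
\[
\Phi^{*}H=\eta+(d-3)H_{L},\qquad\eta=c_{1}(\mathcal{O}_{\mathbb{P}(\mathcal{E})}(1)),
\]
and integrate the $8$-th power over $X$. Since $Y\to\mathbb{P}^{2\vee}$ is a $\mathbb{P}^{1}$-bundle with $\dim\mathbb{P}^{2\vee}=2$, one has $H_{L}^{3}=0$; combined with the projective-bundle pushforward $\pi_{*}(\eta^{5+k})=c_{k}(\mathcal{Q})$, where $\mathcal{Q}=W\otimes\mathcal{O}_{Y}/\mathcal{E}$ is the rank-$4$ cokernel, only the three terms $j=0,1,2$ in the binomial expansion survive, explaining a priori why the answer is quadratic in $d-3$.

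The main technical step is to identify $\mathcal{Q}$ explicitly. Writing $L_{\mathrm{tan}}=\mathcal{O}_{\mathbb{P}(T_{\mathbb{P}^{2}})}(-1)$ for the tautological tangent-direction line bundle on $Y=\mathbb{P}(T_{\mathbb{P}^{2}})$, I would present $\mathcal{Q}$ as an extension
\[
0\to L_{\mathrm{tan}}^{-2}\otimes\mathcal{O}_{Y}(3,0)\to\mathcal{Q}\to\pi_{1}^{*}P^{1}(\mathcal{O}_{\mathbb{P}^{2}}(3))\to 0,
\]
where the quotient factor encodes the three nodality conditions and the sub-line-bundle encodes restriction of the tangent cone to $\ell$. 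From the Euler sequence one has $c(P^{1}(\mathcal{O}(3)))=(1+3h)(1+3h+3h^{2})=1+6h+12h^{2}$ on $\mathbb{P}^{2}$, so
\[
c(\mathcal{Q})=(1+2\xi+3h)(1+6h+12h^{2})=1+(9h+2\xi)+(30h^{2}+12h\xi)+24h^{2}\xi.
\]
Integrating $\int_{Y}c_{3}(\mathcal{Q})$, $\int_{Y}c_{2}(\mathcal{Q})H_{L}$, and $\int_{Y}c_{1}(\mathcal{Q})H_{L}^{2}$ in the Chow ring of $Y$ and weighting by $\binom{8}{0}$, $\binom{8}{1}$, and $\binom{8}{2}$ should produce $24+144(d-3)+140(d-3)^{2}$. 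The hard part is keeping the sign conventions in the projective-bundle pushforward straight and verifying that the natural defining map $W\otimes\mathcal{O}_{Y}\twoheadrightarrow\mathcal{Q}$ really has rank $4$ at every point of $Y$, so that $\mathcal{E}$ is a genuine subbundle. The predegree is then $3$ times the degree because the stabiliser of $Z^{d-3}(ZXY+X^{3}+Z^{3})$ in $PGL_{3}$ is exactly the diagonal $\mathbb{Z}/3$ generated by $(X,Y,Z)\mapsto(\omega X,\omega^{2}Y,Z)$: any stabiliser must preserve the node, the distinguished tangent line $\{Z=0\}$, and the other tangent $\{X=0\}$, and the resulting upper-triangular calculation forces diagonality.
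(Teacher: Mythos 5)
Your proposal is correct, and I checked your numbers: with $\int_Y h^2\xi=1$, $\int_Y h\xi^2=-3$, $\int_Y\xi^3=6$ one gets $\int_Y c_3(\mathcal{Q})=24$, $\int_Y c_2(\mathcal{Q})H_L=18$, $\int_Y c_1(\mathcal{Q})H_L^2=5$, hence $24+8\cdot 18\,(d-3)+28\cdot 5\,(d-3)^2$ as claimed; the sign worry you flag is harmless because with the subspace convention $\pi_*(\eta^{5+k})=s_k(\mathcal{E})=c_k(\mathcal{Q})$ directly. However, your route to the three coefficients is genuinely different from the paper's. Both arguments use the same $8$-dimensional parameter space (your $\mathbb{P}(\mathcal{E})$ is the paper's variety $V$ of triples $(C,L,p)$), the same birational map induced by $H+(d-3)h$, and the same observation $h^3=0$ reducing everything to three intersection numbers $a,b,c$. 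But the paper evaluates these by three separate arguments: $a=2\cdot 12$ from the classical count of singular cubics in a pencil, $c=5\binom{8}{2}$ from a degeneracy-locus computation with a jet bundle on the fixed line $L$ (its Lemma on the five cubics), and $b$ only indirectly, by computing the $d=4$ total $a+b+c=308$ from Kazarian's $D_6$ formula and subtracting. Your approach computes all three coefficients uniformly from the Chern classes of the single rank-$4$ quotient $\mathcal{Q}$ on the flag variety; this is more self-contained (no appeal to Kazarian or to the twelve nodal cubics, both of which are instead recovered as consequences) at the cost of having to justify the extension presentation of $\mathcal{Q}$ and the bundle conventions carefully. Your automorphism computation for the predegree factor of $3$ is also correct and in fact more detailed than the paper, which simply asserts that $C_{BN}$ has an order-$3$ automorphism group. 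Two small points to tighten: the fourth linear condition defining $\mathcal{E}$ is that $\ell$ lies in the tangent cone at $p$ (equivalently meets the cubic to multiplicity $\geq 3$ there), not that it "specifies a branch," and for $d=4$ the line occurs with multiplicity one in the image curve, so birationality of $\Phi$ rests on the line being the unique linear factor of the general member rather than on its high multiplicity.
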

We will prove \Cref{prop:predegreepolyCBN} in pieces below.  Observe
that the calculation of the degree of $\P \Orb(C_{BN})$ implies the
assertion on the predegree thanks to the fact, left to the reader to
check, that the curve $C_{BN}$ has order $3$ automorphism group.

First, we show that such a quadratic expression in $d$ exists in the
first place.

\begin{lem}\label{lem:predegreepolyCBN}
  Let $d \geq 4$. As a function of the degree $d$, the degree of the
  orbit closure $\P \Orb(C_{BN}) \subset \P \on{Sym}^{d}V^{\vee}$ is a
  quadratic polyonomial \[a + b\cdot (d-3) + c \cdot (d-3)^{2}\] with
  $a,b,c\geq 0$.

  Explicitly $a,b,c$ are the answers to the following enumerative
  problems:
  \begin{enumerate}
  \item $a$ is twice the number of singular cubics through $8$ general
    points in $\P V$, i.e. $a = 24$.
  \item $b$ is $\binom{8}{2}$ times the number of nodal cubics through
    $7$ general points in $\P V$ with a nodal branch line containing a
    fixed $8$-th general point.
  \item $c$ is $\binom{8}{2}$ times the number of nodal cubics through
    $6$ general points having a specified line as a branch of the
    node.
  \end{enumerate}

\end{lem}

\begin{proof} Let
  \begin{align}
    \label{Delta}
    \Delta \subset \P \on{Sym}^{3}V^{\vee} \times \P V^{\vee} \times \P V
  \end{align}
  denote the $8$-dimensional variety which is the closure of the set
  of triples $(C,L,p)$ where $C$ is a nodal cubic curve singular at
  the point $p \in \mb{P} V$ and $L \subset \P V$ is a line containing
  $p$ whose intersection multiplicity with $C$ is strictly greater
  than $2$.

  The variety $\Delta$ has three natural projection maps
  $p_{1}, p_{2}, p_{3}$ to the three respective factors of
  $\P \on{Sym}^{3}V^{\vee} \times \P V^{\vee} \times \P V$. Let $H$
  denote the divisor class on $\Delta$ corresponding to
  $p_{1}^{*}\mc{O}(1)$.  Similarly, let $h$ denote the divisor class
  $p_{2}^{*}(\mc{O}(1))$.

  Let
  \[\nu: \mb{P}\on{Sym}^{3}V^{\vee} \times \mb{P}V^{\vee} \to
    \mb{P}\on{Sym}^{d}V^{\vee}\] denote the map which sends a pair
  $(C,L)$ to the degree $d$ curve $C \cup (d-3) \cdot L$. Then the
  composite map
  \[\nu \circ (p_{1},p_{2}) : \Delta \to \P \on{Sym}^{d}V^{\vee}\] is
  such that the divisor class corresponding to
  $\left[\nu \circ (p_{1},p_{2})\right]^{*}\mc{O}(1)$ is $H +
  (d-3)h$. Furthermore, $\nu \circ (p_{1},p_{2})$ is birational onto
  its image, and its image is precisely $\P \Orb(C_{BN})$.

  Therefore, we conclude that the degree of $\P \Orb(C_{BN})$ is given
  by the intersection number $\int_{\Delta}(H+(d-3)h)^{8}$ on
  $\Delta$.

  Since $h^{3}=0$ on $\Delta$, this latter intersection number is
  equal to
\begin{align*}
  \int_{\Delta} H^{8} + 8(d-3)H^{7}h + {8 \choose 2}(d-3)^{2}H^{6}h^{2}.
\end{align*}

The numbers $a,b,c$ appearing in the statement of the lemma correspond
to the monomials $H^{8}, H^{7}h, H^{6}h^{2}$ evaluated on
$\Delta$. Each monomial is straightforwardly interpreted as the
solution to certain enumerative problems:
\begin{align*}
  H^{8}&=2\# \{\text{singular cubics through 8 points}\},
  \end{align*}
  where the coefficient of 2 arises because
  $p_{1}: \Delta \to \mb{P}\on{Sym}^{3}V^{\vee}$ is $2:1$ onto its
  image. Furthermore,
  \begin{align*}
    H^{7}h&=\# \{\text{nodal cubics through 7 points with a nodal branch line containing a fixed 8th point}\}\\
    H^{6}h^{2}&=\#\{\text{nodal cubics through 6 points with specified nodal branch line}\}.
\end{align*}
This proof of the lemma is complete, after noting that the exact value
of $a$ comes from the fact that there are twelve nodal cubics in a
general pencil of cubics \cite{W08}.
\end{proof}

Obviously, our task is now to establish the numbers $b,c$ in
\Cref{lem:predegreepolyCBN}.

\begin{lem}
\label{abcsum}
The sum $a+b+c$ in \Cref{lem:predegreepolyCBN} is $308$.
\end{lem}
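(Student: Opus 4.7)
My plan is to evaluate the intersection number $(H + h)^8$ on the 8-dimensional variety $V$ introduced in the proof of \Cref{lem:predegreepolyCBN}, and then read off $a + b + c$ (and in fact the individual values of $a$, $b$, $c$) from the binomial expansion $(H + h)^8 = H^8 + 8 H^7 h + 28 H^6 h^2$; higher powers of $h$ vanish because $h$ is pulled back from $\mb{P}^{2*}$. The binomial coefficients match exactly the normalizations in the definitions of $a$, $b$, $c$ from \Cref{lem:predegreepolyCBN}, so this computes all three monomials at once.

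To execute this, I realize $V$ as a projective bundle $\pi: V \cong \mb{P}(\mc{E}) \to I$, where $I = \{(p, L) \in \mb{P}^2 \times \mb{P}^{2*} : p \in L\}$ is the 3-dimensional flag variety, and $\mc{E}$ is the rank-6 subbundle of the trivial bundle $H^0(\mb{P}^2, \mc{O}(3)) \otimes \mc{O}_I$ whose fiber over $(p, L)$ is the space of cubics singular at $p$ with $L$ tangent to a branch at $p$. The rank-4 quotient $\mc{Q}$ fits into an extension
\[
0 \to \mc{P}^1(\mc{O}(3))|_p \to \mc{Q} \to \mc{L}_{\mathrm{branch}} \to 0,
\]
where the first term is the pullback of the first jet bundle of $\mc{O}(3)$ along $I \to \mb{P}^2$, and $\mc{L}_{\mathrm{branch}} = (TL)^{-2} \otimes \mc{O}(3)|_p$ records the condition that the Hessian at $p$ applied to the tangent direction of $L$ vanishes (the $(TL)^{-2}$ factor reflects that the Hessian is quadratic in the tangent vector).

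Working in $A^\bullet(I) = \mb{Z}[h_p, h_L]/(h_p^3, h_L^3, h_L^2 - h_p h_L + h_p^2)$, I compute $c(\mc{P}^1(\mc{O}(3))) = 1 + 6 h_p + 12 h_p^2$ from the standard jet sequence $0 \to \Omega^1 \otimes \mc{O}(3) \to \mc{P}^1(\mc{O}(3)) \to \mc{O}(3) \to 0$, and $c_1(\mc{L}_{\mathrm{branch}}) = 2 h_L - h_p$ after identifying the tautological hyperplane class of $I \cong \mb{P}(T \mb{P}^2)$ with $h_L - 2 h_p$. Using $c(\mc{E}) = c(\mc{Q})^{-1}$ and the Segre-class identity $s_k(\mc{E}) = c_k(\mc{Q})$, the pushforward rule $\pi_*(\xi^{5+k}) = s_k(\mc{E})$ gives
\[
H^8 = \int_I s_3(\mc{E}) = 24, \quad H^7 h = \int_I h_L \, s_2(\mc{E}) = 18, \quad H^6 h^2 = \int_I h_L^2 \, s_1(\mc{E}) = 5,
\]
so $a + b + c = 24 + 8 \cdot 18 + 28 \cdot 5 = 308$ (recovering $a = 24$, $b = 144$, $c = 140$).

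The main hurdle is identifying $\mc{L}_{\mathrm{branch}}$ correctly and expressing it in the generators $h_p, h_L$; this requires unpacking the description of $I$ as $\mb{P}(T\mb{P}^2)$ and deriving the relation $\eta = h_L - 2 h_p$ for the tautological class by matching coefficients against the unique degree-$2$ relation of $I$. Once this identification is in place, the rest is mechanical bookkeeping in the presentation of $A^\bullet(I)$.
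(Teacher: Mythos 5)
Your proof is correct, but it takes a genuinely different route from the paper's. The paper disposes of this lemma in one line: since $a+b+c$ is the value of the quadratic at $d=4$, i.e.\ the degree of the orbit of $C_{D_6}$ in the $\mb{P}^{14}$ of quartics, it reads that degree off from the equivariant class $p_{C_{D_6}}$ already obtained from Kazarian's $D_6$ formula (\Cref{A6E6orbit}), converted via the substitution of \Cref{projectivethom}. You instead compute the three intersection numbers $H^8$, $H^7h$, $H^6h^2$ directly on $V$ by exhibiting $V$ as $\mb{P}(\mc{E})$ over the flag variety $I$ and pushing forward powers of the hyperplane class via Segre classes. I checked your ingredients: the relation $h_p^2 - h_ph_L + h_L^2 = 0$ in $A^{\bullet}(I)$, the identification $\eta = h_L - 2h_p$ for the tautological class of $I \cong \mb{P}(T\mb{P}^2)$ (so $c_1(\mc{L}_{\mathrm{branch}}) = 2h_L - h_p$), $c(\mc{P}^1(\mc{O}(3))) = 1 + 6h_p + 12h_p^2$, and the resulting $c(\mc{Q}) = 1 + (5h_p+2h_L) + (6h_p^2 + 12h_ph_L) + 24h_p^2h_L$ all come out right, and the pushforward rule $\pi_*(\xi^{5+k}) = c_k(\mc{Q})$ is the correct one for the paper's subbundle convention; this yields $H^8=24$, $H^7h=18$, $H^6h^2=5$ and hence $a+b+c=24+144+140=308$. (A minor cosmetic point: in your extension presenting $\mc{Q}$, the line bundle $\mc{L}_{\mathrm{branch}}$ should be the sub and the jet bundle the quotient, but this does not affect the total Chern class.) What your approach buys: it is independent of Kazarian's formula, it works uniformly in $d$, and it determines $a=24$, $b=144$, $c=140$ individually, which would let one bypass \Cref{5cubics} and the classical count of singular cubics in a pencil in the proof of \Cref{prop:predegreepolyCBN}. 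What it costs is the careful identification of $\mc{L}_{\mathrm{branch}}$ in the generators of $A^{\bullet}(I)$, whereas the paper simply reuses a computation it needs anyway.
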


\begin{proof}
  To compute $a+b+c$, we must specialize the general formula in
  \Cref{lem:predegreepolyCBN} to the case $d=4$. In this case,
  $C_{BN}$ is also the curve $C_{D_6}$. To calculate the degree of
  $\P \Orb(C_{D_6})$ in the space $\mb{P}\on{Sym}^{4}V^{\vee}$ of
  quartic plane curves, we apply \Cref{A6E6orbit} together with
  \Cref{projectivethom}. Explicitly, we take
\begin{align*}
  \frac{1}{\#\on{Aut}(C_{D_6})}\p_{C_{D_6}}&= 64 (18 c_1^6 + 33 c_1^4 c_2 + 12 c_1^2 c_2^2 - 85 c_1^3 c_3 - 11 c_1 c_2 c_3 - 
                                             7 c_3^2)
\end{align*}
from \Cref{A6E6orbit}, then make the substitution $c_1\mapsto u+v+w$,
$c_2\mapsto uv+uw+vw$, $c_3\mapsto uvw$ followed by the substitutions
$u\mapsto u-\frac{H}{4}$, $v\mapsto v-\frac{H}{4}$,
$w\mapsto w-\frac{H}{4}$, and then extract the coefficient of
$H^6$. The end result is $308$.
\end{proof}

\begin{prop}
\label{5cubics}
The coefficient $c$ in \Cref{lem:predegreepolyCBN} is
$5 \cdot {8 \choose 2}$.
\end{prop}

\begin{proof}
  By \Cref{lem:predegreepolyCBN} $c$ amounts to the following
  enumerative fact: Fix $6$ general points $p_{1}, ... p_{6}$ in
  $\P V$ and fix a general line $L \subset \P V$.  Then there are $5$
  cubics containing the points $p_{i}$ which are singular at a point
  on $L$ and meeting $L$ with multiplicity $\geq 3$ at the singular
  point.

  To prove this fact, we re-express the statement as the degree of the
  degeneracy locus of a map between two rank $4$ vector bundles
  \[e: \mathcal{A} \to \mathcal{B}\] on the line $L$. We omit the
  transversality argument implicit in this re-expression, and leave it
  to the reader.

  The vector bundle $\mathcal{A}$ is simply the trivial vector bundle
  whose fiber at any point $p \in L$ is the vector space
  \[H^{0}\left(\P V, \mc{I}_{p_1, \ldots, p_{6}}(3)\right)\] of cubic
  curves containing the $6$ points $p_{1}, ..., p_{6}$.

  We next describe the second vector bundle $B$ as a certain bundle of
  jets.  For each point $p \in L$, let
  \[\mc{J}_{p} \subset \mc{O}_{\mb{P}V}\] denote the ideal defining
  the divisor $3p$ in $L$, and let
  \[\mathfrak{m}^{2}_{p} \subset \mathcal{O}_{\mb{P}V}\] denote the
  square of the ideal sheaf of $p$. Let \[W_{p} \subset Z_{p}\] denote
  the subschemes defined by $\mc{J}_{p}$ and
  $\mc{J}_{p} \cap \mathfrak{m}^{2}_{p}$ respectively.

  We first define $\mathcal{B}'$ to be the rank $3$ vector bundle on
  $L$ whose fiber at a point $p$ is given by:
  \begin{align*}
  \mathcal{B}'|_{p} = H^{0}(\P V, \mc{O}(3))/H^{0}(\P V, \mathcal{J}_{p}(3)),
\end{align*}

Finally, we define $\mathcal{B}$ to be the rank $4$ vector bundle on
$L$ whose fiber at a given point $p$ is
\begin{align*}
  \mc{B}|_{p} = H^{0}(\P V, \mc{O}(3))/H^{0}(\P V, (\mathcal{J}_{p} \cap \mathfrak{m}_{p}^{2})(3)).
\end{align*}

Our next task is to compute the degree of $\mathcal{B}$.  The quotient
$\mc{J}_{p}/\left(\mc{J}_{p} \cap \mathfrak{m}^{2}_{p}\right) $ can
naturally be identified with the restriction of the conormal space
$\left(\mc{I}_{L}/\mc{I}_{L}^{2}\right)|_{p}$: In local affine
coordinates $(x,y)$, if $L$ is the line $x=0$ and $p$ is the origin,
then
$\mc{J}_{p} = (x,y^{3}), \mc{J}_{p} \cap \mathfrak{m}^{2}_{p} =
(x^{2},xy,y^{3})$, and
$\mc{J}_{p}/\mc{J}_{p} \cap \mathfrak{m}^{2}_{p}$ is generated by
$\bar{x}$, the local generator for
$\left(\mc{I}_{L}/\mc{I}_{L}^{2}\right)|_{p}$.

Putting these observations together, we obtain a short exact sequence
of vector bundles:
\begin{align}
  0 \to \mc{I}_{L}/\mc{I}^{2}_{L} \otimes \mc{O}_{L}(3) \to \mathcal{B} \to \mathcal{B'} \to 0.
\end{align}

Therefore, the degree of $\mathcal{B}$ (as vector bundle on $L$) is
equal to the degree of $\mathcal{B}'$ plus the degree of the line
bundle $\mc{I}_{L}/\mc{I}^{2}_{L} \otimes \mc{O}_{L}(3)$.  The latter
clearly has degree $2$.  $\mathcal{B}'$ is the standard second order
jet bundle for the line bundle $\mc{O}_{L}(3)$, and is easily seen to
have degree $3$. Therefore, the degree of $\mathcal{B}$ is $5$.

Furthermore, in reference to the affine coordinates above, notice that
a general cubic polynomial in the ideal $(x^{2},xy,y^{3})$ is
precisely a cubic singular at $(0,0)$ having one branch being the line
$x=0$.

At last, we take the map $e: A \to B$ induced by the natural quotient
map over each point $p$.  By the observation in the previous
paragraph, the locus of points where $e$ drops rank is precisely the
number $c$.  Since $A$ is trivial, the number of points where $e$
drops rank is the degree of $B$, which is $5$.  The lemma now follows.
\end{proof}

\begin{proof}[Proof of \Cref{prop:predegreepolyCBN}]
  Since $a=24$ and $c=5\cdot 28=140$ (from \Cref{5cubics}), it follows
  that $b=144$ thanks to \Cref{abcsum}, and the proposition is proved.
\end{proof}

\subsection{Degeneration to a cusp}

Recall that a cusp singularity on a plane curve is called
\emph{ordinary} if no line meets it with multiplicity $\geq 4$. Let
$U \subset \mathbb{A}^{1}$ be a suitable neighborhood of $0$ with
coordinate $t$.  As in \Cref{nodedegeneration}, we will write $R$ for
the coordinate ring of $U$, with valuation $v$ corresponding to $0$,
and we abuse notation and use fractional powers of $t$ to indicate
base changes $s^{n}=t$.

Throughout this subsection, we assume
\[\alpha_{\on{cusp}}: U \to \P \on{Sym}^{d}V^{\vee}\] is a family of curves
defined by a degree $d$ homogeneous form $F(X,Y,Z)$ with coefficients
in $R$, obeying:
\begin{enumerate}
\item The curve $C_{0} := \alpha_{\on{cusp}}(0)$ has an ordinary cusp
  singularity at $[0:0:1]$ meeting the line $Y=0$ to order $3$,
\item the total space $\mathcal{C}$ of the family is smooth at the
  cusp point on $C_{0}$.
\end{enumerate}

\begin{lem}
\label{cuspmatrix}
Maintain the setting above, and let $\gamma(t)$ denote the family of
matrices
\[\begin{pmatrix}
    t^{-1/3} & 0 & 0 \\
    0 & t^{-1/2} & 0 \\
    0 & 0 & 1\\
  \end{pmatrix}
  \]
  Then the curve $\alpha_{\on{cusp}}^{\gamma}(0)$ is defined by the
  equation
\begin{align*}
    \lim_{t\to 0}t^{-1}(F(t^{\frac{1}{3}}X,t^{\frac{1}{2}}Y,Z))
\end{align*}
and is projectively equivalent to the curve
$C_{\on{flex}}[0]: Z^{d-3}(X^3+Y^2Z+Z^3) = 0$. (See
\Cref{specialcurves}.)
\end{lem}

\begin{proof}
  The proof runs exactly parallel to the proof of \Cref{nodematrix},
  so we omit it.
\end{proof}

\begin{rmk}
  \label{rmk:cuspkernel}
  Notice that the limit endomorphism $\gamma(0)$ has as kernel space
  the line $Y=0$, i.e. the cuspidal tangent of the cusp of $C_0$.
  Meanwhile, the $t=0$ limit of $\gamma^{-1}$ has as its image the
  cusp point $[0:0:1]$.
\end{rmk}

\subsection{Degenerating the orbit of $C_{\on{flex}}[j]$}
\label{sec:jdegeneration}

In this subsection, we will study how $\P \Orb(C_{\on{flex}}[j])$
specializes as we vary the $j$-invariant to $\infty$.  The outcome
will be the proof of \Cref{prop:CANCBN}.  In order to proceed, we
introduce and study particular variety which will induce the
specialization implicit in \Cref{prop:CANCBN}.

\begin{defn}
  \label{def:W}
  Define
  \[\mathcal{W} \subset \P \on{Sym}^{3}V^{\vee} \times \P V^{\vee}
    \times \P V\] to be the closed subvariety parametrizing triples
  $(C,L,p)$ where $L$ is a line meeting the cubic curve $C$ at the
  point $p$ with multiplicity $\geq 3$.  Let $p_{1}, p_{2}, p_{3}$
  denote the projections of $\mathcal{W}$ to the respective factors.
  Finally, we let $H_{\on{curve}}, H_{\on{line}}$ and $H_{\on{point}}$
  denote the pullbacks of hyperplane classes under the maps
  $p_{1}, p_{2}, p_{3}$ respectively.
\end{defn}

It is easy to see that $\mathcal{W}$ is smooth: Indeed, $\mathcal{W}$
is a projective bundle over the smooth $3$-dimensional incidence
variety parametrizing pairs $(L,p)$ with $p \in L$.

\begin{defn}
  \label{def:j}
  We let \[J: \mathcal{W} \dashrightarrow \P^1\] denote the rational
  map which sends a triple $(C,L,p)$ to the $j$-invariant $j(C)$.
\end{defn}

The $j$-invariant is well-defined (and finite) for smooth cubics.  It
takes on the value $\infty$ for a cubic with a single node, for a
cubic with two nodes (i.e. a smooth conic union a line), and for a
cubic with three nodes (i.e. a union of three non-concurrent lines).
For any other cubic curve, the $j$-invariant is not defined.

For each $j \in \P^{1}$ let
\begin{align}
  \label{WjW}
  \mathcal{W}_{j} \subset \mathcal{W}
  \end{align}
  denote the divisor (possibly non-reduced) given by $J^{-1}(j)$.
  This is well-defined, as the locus of indeterminacy of $J$ has
  codimension at least $2$.  $J$ determines a rational equivalence of
  cycles:
  \[[\mathcal{W}_{j}] \sim [\mathcal{W}_{\infty}].\]

  Our short-term objective is to understand the irreducible components
  of $\mathcal{W}_{\infty}$.  We leave it to the reader to check that
  $\mathcal{W}_{\infty}$ has precisely three irreducible components:

\begin{itemize}
\item [$\mathcal{W}_{BN, \infty}$]: This is the closure of the locus
  of triples $(C,L,p)$ where $C$ has a unique node at $p$ and $L$ is
  one of the branches at the node,
\item [$\mathcal{W}_{AN, \infty}$]: This is the closure of the locus
  of those triples $(C,L,p)$ such that $C$ has a unique node, $p$ is a
  (smooth) flex point of $C$, and $L$ is the flex line at $p$.
\item [$\mathcal{W}_{\on{conic}}$]: This is the closure of the locus
  of those triples $(C,L,p)$ such that $C = Q \cup L$, where $Q$ is a
  smooth conic, and $p \in L$ is a general point.
\end{itemize}

Thus, for any finite $j \in \P^{1}$, we $J$ provides a
$GL(V)$-equivariant rational equivalence:
\begin{align}
  \label{eq:Jrat}
  [\mathcal{W}_{j}] \sim t_{BN} \cdot [\mathcal{W}_{BN, \infty}] + t_{AN} \cdot [\mathcal{W}_{AN, \infty}] + t_{\on{conic}} \cdot [\mathcal{W}_{\on{conic}}]. 
\end{align}

We must determine the two multiplicities $t_{BN},t_{AN}$ in
particular, because $[\mathcal{W}_{\on{conic}}]$ will not matter in
the final analysis.

\begin{prop}
  \label{prop:tBNtAN}
  In \eqref{eq:Jrat}, $t_{BN} = 3$ and $t_{AN} = 1$.
\end{prop}

\begin{proof}
  Recall from \Cref{def:W} the class $H_{\on{curve}}$ on $\mathcal{W}$
  corresponding to the line bundle $p_{1}^{*}(\mathcal{O}(1))$.  Upon
  intersecting both sides of \eqref{eq:Jrat} by $H_{\on{curve}}^{8}$
  we get:
  \begin{align}
    \label{eq:intW}
    \int_{\mathcal{W}} H_{\on{curve}}^{8} \cdot  [\mathcal{W}_{j}]  = t_{BN} \int_{\mathcal{W}}H_{\on{curve}}^{8} \cdot   [\mathcal{W}_{BN, \infty}] + t_{AN} \int_{\mathcal{W}}H_{\on{curve}}^{8} \cdot  [\mathcal{W}_{AN, \infty}] + 0.  
  \end{align}
  The $0$ arises because the cycle $\mathcal{W}_{\on{conic}}$ gets
  contracted under the projection $p_{1}$.  Now, we simply determine
  the three integrals above.
  \begin{enumerate}
  \item
    $\int_{\mathcal{W}} H_{\on{curve}}^{8} \cdot [\mathcal{W}_{j}] =
    12 \times 9$ because, in a general pencil of cubic curves, a given
    $j$-value arises $12$ times, and each smooth cubic has $9$ flexes.

  \item
    $\int_{\mathcal{W}}H_{\on{curve}}^{8} \cdot [\mathcal{W}_{BN,
      \infty}] = 12 \times 2$ because, in a general pencil of cubics,
    there are $12$ singular cubics and each one has two branch lines,

  \item
    $\int_{\mathcal{W}}H_{\on{curve}}^{8} \cdot [\mathcal{W}_{AN,
      \infty}] = 12 \times 3$ because, a general pencil of cubics has
    $12$ singular members, and each has $3$ (smooth) flex points.
  \end{enumerate}

  Therefore, we conclude that \[108 = 24t_{BN} + 36t_{AN}.\] The only
  positive integer solution to this latter equation is
  $t_{BN} = 3, t_{AN} = 1$, and the proposition is proved.
\end{proof}

\begin{proof}[Proof of \Cref{prop:CANCBN}]\label{proof:CANCBN}
  The rational equivalence induced by $J$ in \eqref{eq:Jrat} is
  evidently $GL(V)$-equivariant. We push forward this rational
  equivalence to $\P \on{Sym}^{d}V^{\vee}$ via the map
  $\varphi: \mathcal{W} \to \on{Sym}^{d}V^{\vee}$ sending $(C,L,p)$ to
  the curve $C \cup (d-3)L$, and after using \Cref{prop:tBNtAN}, the
  proposition follows.
\end{proof}

\subsection{The degree of $\P \Orb(C_{\on{flex}}[j])$} Next, we
compute the degree of the orbit closure
$\P \Orb(C_{\on{flex}}[j]) \subset \P \on{Sym}^{d}V^{\vee}$, where $j$
is {\sl general}.  Again, although this can be computed in principle
using the algorithm of Aluffi and Faber \cite{AF00}, we have decided
to proceed independently, providing a further check.

First, we record the analogue to \Cref{lem:predegreepolyCBN}.
\begin{lem}
  \label{lem:Cflexpoly} As a function of $d$, the degree of the orbit
  closure $\P \Orb(C_{\on{flex}}[j])$, $j$ general, is a quadratic
  polynomial $a + b\cdot(d-3) + c \cdot (d-3)^{2}$, where the
  coefficients $a,b,c$ are the answers to the following enumerative
  problems:
\begin{align*}
  a &= 9\cdot 12\# \{\textrm{Cubics through 9 points} \}= 108\\
  b &= 12\cdot \binom{8}{1}\# \{\text{Cubics through 8 points with flex line containing a fixed 9th point}\}\\
  c &= 12\cdot \binom{8}{2} \#\{\text{Cubics through 7 points flexed at a specified line}\}
\end{align*}
\end{lem}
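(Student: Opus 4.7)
The plan is to follow the proof of Lemma \ref{lem:predegreepolyCBN} essentially verbatim, with the smooth $9$-dimensional variety $W$ (from the proof of Proposition \ref{jdegeneration}) replacing the $8$-dimensional variety $V$. Let $H$ and $h$ denote the divisor classes on $W$ pulled back from $\mc{O}(1)$ under the projections $W \to \mb{P}^9$ (forgetting $L$ and $p$) and $W \to \mb{P}^{2*}$ (forgetting $C$ and $p$), respectively. As in the earlier lemma, the natural map
\begin{align*}
f: W \to \mb{P}^{\binom{d+2}{2}-1}, \qquad (C,L,p) \mapsto C + (d-3) L
\end{align*}
is induced by the divisor class $H + (d-3) h$.

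The next step is to argue that for $d \geq 4$ and generic $j$, the restriction $f|_{W_j}$ is birational onto the orbit closure of $C_{\on{flex}}$: from a generic curve $C + (d-3) L$ one recovers $L$ as the unique multiplicity-$(d-3)$ component, $C$ as the residual cubic, and $p$ as the unique point where $L$ meets $C$ with multiplicity at least $3$. Hence the degree of the orbit closure equals
\begin{align*}
[W_j] \cdot (H + (d-3)h)^8 \;=\; [W_j] \cdot H^8 \,+\, 8(d-3)\, [W_j] \cdot H^7 h \,+\, \binom{8}{2} (d-3)^2 \, [W_j] \cdot H^6 h^2,
\end{align*}
using $h^3 = 0$ on $W$ (as $h$ is pulled back from the $2$-dimensional $\mb{P}^{2*}$). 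This already exhibits the answer as a quadratic polynomial in $d-3$ with coefficients $a,b,c$.

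The last step is the enumerative interpretation of each intersection number. Since the $j$-invariant is a ratio of degree-$12$ forms (e.g.\ $1728\, g_2^3/\Delta$) in the coefficients of the cubic, the class of the locus of cubics with fixed $j$-invariant equals $12 H$ on $\mb{P}^9$; pulling back through the generically $9$-to-$1$ projection $W \to \mb{P}^9$ gives $[W_j] = 12 H$ in $A^1(W)$ and $H^9 = 9$ on $W$. This immediately recovers $a = 12 \cdot 9 = 108$, which is already the content of the calculation $[W_j] \cdot H^8 = 108$ carried out in the proof of Proposition \ref{jdegeneration}. The factor of $12$ in the stated formulas for $b$ and $c$ comes from $[W_j] = 12 H$, while the products $H^{9-k} h^k$ enumerate triples $(C,L,p)$ in $W$ with $9-k$ general point conditions imposed on $C$ and $k$ conditions imposed on $L$ (with $h^2$ pinning down $L$ completely), matching the stated interpretations. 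The only genuinely new point beyond Lemma \ref{lem:predegreepolyCBN} is the birationality of $f|_{W_j}$; this follows from the uniqueness of the decomposition "smooth cubic plus multiplicity-$(d-3)$ line" for generic members of the orbit closure once $d \geq 4$, and I do not anticipate any serious obstacle.
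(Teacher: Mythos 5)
Your proposal is correct and follows essentially the same route as the paper: compute $(H+(d-3)h)^8\cdot[W_j]$ on $W$, use $h^3=0$ and $[W_j]=12H$, and interpret the resulting intersection numbers enumeratively. The only addition is your explicit check that $f|_{W_j}$ is birational onto the orbit closure, which the paper leaves implicit but which is indeed the reason the intersection number computes the degree of the image.
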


\begin{proof}\label{proof:Cflex}
  Recall the schemes $\mathcal{W}$ and $\mathcal{W}_j$ from
  \eqref{WjW}, and the classes $H_{\on{curve}}$ and $H_{\on{line}}$
  from \Cref{def:W}. The variety $\mathcal{W}$ affords the natural map
  \[\varphi: \mathcal{W} \to \P \on{Sym}^{d}V^{\vee}\]
  defined by sending $(C,L,p)$ to the curve $C \cup (d-3)L$, and the
  pullback of a hyperplane under this map has class
  $H_{\on{curve}} + (d-3)H_{\on{line}}$.

  Our objective is to calculate the degree of the image
  $\varphi(\mathcal{W}_{j})$, as this is precisely the orbit closure
  of $C_{\on{flex}}[j]$.  Thus, it suffices to compute
  $\int_{\mathcal{W}}(H_{\on{curve}}+(d-3)H_{\on{line}})^{8} \cdot
  [W_j]$ in the Chow ring of $\mathcal{W}$.

  Since
  $H_{\on{line}}^{3}=0$, we get that the degree of the orbit closure
  of $C_{\on{flex}}[j]$ is:
  \begin{align}
    \label{eq:polyflex}
    \int_{\mathcal{W}} \left(H_{\on{curve}}^{8}\cdot [\mathcal{W}_j] + 8(d-3)H_{\on{curve}}^{7}H_{\on{line}}\cdot [\mathcal{W}_j] + {8 \choose 2}(d-3)^{2}H_{\on{curve}}^{6}H_{\on{line}}^{2}\cdot [\mathcal{W}_j] \right).
  \end{align}

  Next, we observe that the divisor class of $\mathcal{W}_j$ is
  $12 H_{\on{curve}}$, because the degree of the divisorial locus in
  $\P \on{Sym}^{3}V^{\vee}$ consisting of the closure of plane cubics
  with prescribed generic $j$-invariant is $12$.  Therefore, the
  degree of the orbit closure of $C_{\on{flex}}[j]$ is
  \begin{align*}
    12 \int_{\mathcal{W}}\left( H_{\on{curve}}^{9} + 8(d-3)H_{\on{curve}}^{8}H_{\on{line}} + {8 \choose 2}(d-3)^{2}H_{\on{curve}}^{7}H_{\on{line}}^{2} \right).
  \end{align*}

  The lemma now follows by interpreting the three intersection numbers
  \[\int_{\mathcal{W}}H_{\on{curve}}^{9}, \int_{\mathcal{W}}
    H_{\on{curve}}^{8}H_{\on{line}}, \int_{\mathcal{W}}
    H_{\on{curve}}^{7}H_{\on{line}}^{2}\] as the quantities appearing
  in the descriptions of $a, b$ and $c$ in the statement of the lemma.
  \end{proof}

  Our next task is to determine the coefficients $a,b,c$ in
  \Cref{lem:Cflexpoly}.

  \begin{lem}
    \label{lem:bCflex}
    There are $9$ cubic curves passing through eight general points
    and having a flex line containing a general fixed ninth point,
    i.e. \[\int_{\mathcal{W}}H_{\on{curve}}^{8}H_{\on{line}} = 9.\]
  \end{lem}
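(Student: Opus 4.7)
The plan is to realize $H^8h$ as a B\'ezout intersection number on $\mathbb{P}^1 \times \mathbb{P}^2$ and then subtract the excess contributions coming from the nodal fibers of the pencil. I fix $8$ general points of $\mathbb{P}^2$ determining a general pencil $\{C_t\}_{t\in\mathbb{P}^1}$ of cubics, let $\mathcal{C}\subset\mathbb{P}^1\times\mathbb{P}^2$ be its total space (a divisor of bidegree $(1,3)$), and fix a general point $q\in\mathbb{P}^2$. Since a triple in $W$ is determined by $(t,p)$ via the tangent line, $H^8h$ counts pairs $(t,p)$ with $C_t$ smooth, $p$ a flex of $C_t$, and the tangent line to $C_t$ at $p$ passing through $q$.

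Next, I introduce two auxiliary divisors on $\mathbb{P}^1\times\mathbb{P}^2$. The universal Hessian $\mathcal{H}=\{(t,p):H(F_t)(p)=0\}$ has bidegree $(3,3)$ because, $F_t$ being linear in $t$ and cubic in $p$, each entry $\partial^2 F_t/\partial p_i\partial p_j$ is bilinear in $(t,p)$, whence $H(F_t)(p)=\det(\partial^2F_t/\partial p_i\partial p_j)$ has degree $3$ in each; on $\mathcal{C}$, $\mathcal{H}$ cuts out the flexes of smooth fibers. The polar $\mathcal{P}_q=\{(t,p):q\cdot\nabla F_t(p)=0\}$ has bidegree $(1,2)$ and records, at a smooth point of $C_t$, that the tangent line there contains $q$. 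Denoting by $t,h$ the pullbacks of hyperplane classes on $\mathbb{P}^1,\mathbb{P}^2$, B\'ezout gives
\[
[\mathcal{C}]\cdot[\mathcal{H}]\cdot[\mathcal{P}_q]=(t+3h)(3t+3h)(t+2h)=33\,th^2,
\]
so the triple intersection number is $33$.

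Finally, I identify the excess. A general pencil of cubics contains exactly $12$ nodal members, since the discriminant locus in $\mathbb{P}^9$ of plane cubics is a hypersurface of degree $12$. At each node $(t_0,p_0)$ of a nodal fiber $C_{t_0}$, the gradient $\nabla F_{t_0}(p_0)$ vanishes, so $(t_0,p_0)\in\mathcal{P}_q$; furthermore the Hessian cubic $H(F_{t_0})$ vanishes to order $2$ at $p_0$ with the same tangent cone as $F_{t_0}$ itself. A short computation in local analytic coordinates at the node then shows that $\mathcal{C}\cap\mathcal{H}$ has two smooth branches crossing transversally at $(t_0,p_0)$, each tangent to a distinct branch of the node, and that for generic $q$ the polar $\mathcal{P}_q$ is smooth and meets each branch transversally. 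Thus each node contributes local intersection multiplicity $2$ which does not correspond to a genuine flex configuration, yielding total excess $12\cdot 2=24$. Subtracting, $H^8h=33-24=9$.

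The main obstacle is the local computation at a node: one must check that the tangent cone of $H(F_{t_0})$ at $p_0$ coincides (up to scalar) with the tangent cone of $F_{t_0}$, from which both the two-branch structure of $\mathcal{C}\cap\mathcal{H}$ and the transversality of $\mathcal{P}_q$ to each branch (for generic $q$) follow, pinning the per-node excess at exactly $2$.
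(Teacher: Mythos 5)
Your proposal is correct, but it takes a genuinely different route from the paper. The paper represents the class $H^{8}$ by the \emph{Hesse pencil} $s(X^3+Y^3+Z^3)+tXYZ$, whose nine base points are the nine common flexes of every smooth member; at each base point the flex lines of the members sweep out a pencil of lines, so a general point $q$ lies on exactly one flex line per base point, giving $9$ immediately. You instead take a general pencil, compute the triple product $(t+3h)(3t+3h)(t+2h)=33\,th^2$ of the universal curve, the universal Hessian, and the polar of $q$ on $\mb{P}^1\times\mb{P}^2$, and subtract the excess $12\cdot 2=24$ concentrated at the nodes of the twelve singular members. Both arguments are sound; the paper's is shorter but leans on the special geometry of one pencil, while yours is more mechanical and would generalize (e.g.\ to nets, or to higher-degree analogues) without needing a distinguished representative. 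The one step you defer -- that the Hessian of a nodal cubic has a node at the same point with the same tangent cone, so that $\mc{C}\cap\mc{H}$ is nodal there with branches tangent to the branches of the fiber, and the polar (which automatically passes through the node since the gradient vanishes) meets it with local multiplicity $2$ for general $q$ -- is a classical fact and checks out by direct computation (e.g.\ for $x^3+x^2z-y^2z$ the Hessian is $8x^2z-8y^2z-24xy^2$, with tangent cone $8(x^2-y^2)$ at the node, matching that of $F$); one should also note, as you implicitly do, that for a general pencil the correction term coming from restricting $t$ to the graph $t=t_0-F_{t_0}/G$ does not cancel this leading quadratic term. With that verification supplied, $33-24=9$ agrees with the paper.
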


  \begin{proof}
    Let $\Lambda \subset \P \on{Sym}^{3}V^{\vee}$ denote the Hesse
    pencil
    \[s(X^{3} + Y^3 + Z^3) + tXYZ = 0, \, [s:t] \in \P^1.\] Recall
    that the $9$ base points of the Hesse pencil consist of the $9$
    flexes of every smooth cubic in $\Lambda$.  At each base point $p$
    of the pencil, the flex lines of the cubic curves in the pencil in
    sweep out a pencil of lines in $\P V^{\vee}$.  Therefore, a
    general point $x$ in $\P^{2}$ is contained in exactly $9$ flex
    lines of members of the Hesse pencil, one per basepoint.

    Thus, if we use the pullback of the Hesse pencil $\Lambda$ to
    represent the curve class $H_{\on{curve}}^{8}$ on $\mathcal{W}$,
    we deduce the lemma.
  \end{proof}

  \begin{lem}
    \label{lem:cCflex} There are $3$ cubic curves passing through $7$
    general points and possessing a particular line as flex line,
    i.e. \[\int_{\mathcal{W}}H_{\on{curve}}^{7}H_{\on{line}}^{2}=3.\]
  \end{lem}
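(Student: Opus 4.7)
The plan is to reinterpret the count as the number of intersections between a plane and a twisted cubic inside the linear system $|\mc{O}_L(3)|$. Fix general points $p_1, \dots, p_7 \in \mb{P}^2$ and a general line $L \subset \mb{P}^2$, and let $V \subset H^0(\mb{P}^2, \mc{O}(3))$ denote the $3$-dimensional space of cubic forms vanishing at the seven points. First I would show that the restriction map $\rho: V \to H^0(L, \mc{O}_L(3)) \cong \mb{C}^4$ is injective: any element of the kernel would factor as $L \cdot Q$ with $Q$ a conic through the seven points, forcing $Q = 0$ since no conic passes through seven general points. Projectivizing, $\rho$ identifies $\mb{P}(V) \cong \mb{P}^2$ with a plane $\Pi \subset \mb{P}^3 = \mb{P}(H^0(L, \mc{O}_L(3)))$.

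The key geometric observation is that a cubic $C \in \mb{P}(V)$ has $L$ as a flex line at some $p \in L$ precisely when $C \cdot L = 3p$ as a divisor on $L$, equivalently when the point $\rho(C) \in \mb{P}^3$ lies on the twisted cubic $T$, defined as the image of $L$ under the third Veronese map $p \mapsto 3p$. Since $T$ has degree $3$ in $\mb{P}^3$, a general plane meets $T$ in exactly three points, and each such intersection yields a triple $(C, L, p) \in W$ counted by $H^7 h^2$.

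The final step is a transversality check: I would verify that $\Pi$ is generic enough to meet $T$ in three distinct reduced points. The locus of planes failing this condition — those tangent to $T$ or osculating it — forms a proper closed subset of $(\mb{P}^3)^*$, so it suffices to check that the map from $(\mb{P}^2)^7 \times (\mb{P}^2)^*$ to $(\mb{P}^3)^*$ sending a configuration $(p_1, \dots, p_7, L)$ to the associated plane $\Pi$ is not contained in this bad locus. I expect this to be the only technical step, to be handled either by a dimension count on the parameter map or by exhibiting a single explicit configuration where direct inspection shows three distinct intersection points. Granting the transversality, the count gives $H^7 h^2 = 3$.
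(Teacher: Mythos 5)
Your argument is correct, and it reaches the count by a route that is dual to, but genuinely distinct from, the paper's. Both proofs begin the same way: the cubics through the seven points form a net, and the problem is to count the members of that net whose restriction to $L$ is a divisor of the form $3p$. The paper extracts the number $3$ from the source side: the restricted net defines a degree-$3$ map $L \to \mb{P}^2$ whose image is a nodal plane cubic, and the desired divisors correspond to the $3$ flexes of that nodal cubic (a classical Pl\"ucker-type count). You instead work in the ambient $\mb{P}^3 = \mb{P}(H^0(L,\mc{O}_L(3)))$: after checking that restriction embeds the net as a plane $\Pi$ (your kernel argument via ``no conic through seven general points'' is exactly right), the divisors $3p$ sweep out the rational normal curve $T$ of degree $3$, and the answer is $\#(\Pi \cap T) = \deg T = 3$. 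Your version buys a more elementary final step --- the degree of the twisted cubic replaces the flex count of a nodal cubic --- at the cost of having to address transversality of $\Pi \cap T$ explicitly, which you correctly flag; the paper absorbs the same genericity issue into the unproved assertion that the restricted net is a \emph{general} net in $|\mc{O}_L(3)|$. Either way the required check is that the map $(p_1,\dots,p_7,L) \mapsto \Pi$ dominates the space of planes (or at least misses the tangent developable of $T$), which a single explicit configuration settles. So the two proofs are logically parallel, and yours is, if anything, slightly more self-contained.
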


  \begin{proof}
    Let $L \subset \P V$ be a fixed line, and suppose
    $p_{1}, ..., p_{7} \in \P V$ are general points.  Then the net of
    cubic curves containing the points $p_{i}$ restricts to a general
    net in the linear system $|\mc{O}_{L}(3)|$.  A general such net
    maps $L$ to a nodal cubic in $\P^{2}$. This nodal cubic has
    exactly $3$ (smooth) flex points.  The points on $L$ corresponding
    to these flex points, in turn, provide the solutions to the
    enumerative problem in the statement of the lemma.
  \end{proof}

  \begin{cor}
    \label{cor:predegreeCflex}
    The degree of $\P \Orb(C_{\on{flex}}[j])$, $j$-general, is
    $12 \left(9 + 72(d-3) + 84(d-3)^2\right)$. The predegree of
    $C_{\on{flex}}[j]$ is $24 \left(9 + 72(d-3) + 84(d-3)^2\right)$.
  \end{cor}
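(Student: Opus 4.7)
The plan is simply to assemble the three intersection numbers computed in the preceding lemmas and then convert the resulting degree into a predegree by accounting for the automorphism group. Concretely, \Cref{lem:Cflexpoly} expresses the degree of the orbit closure of $C_{\on{flex}}$ as the quadratic
\begin{align*}
12\bigl(H^{9} + 8(d-3) H^{8}h + \tbinom{8}{2}(d-3)^{2} H^{7}h^{2}\bigr).
\end{align*}
The intersection number $H^{9}$ counts triples $(C,L,p) \in W$ where $C$ passes through $9$ general points; a general smooth cubic has exactly $9$ flexes, each furnishing a unique flex line, so $H^{9}=9$. The numbers $H^{8}h = 9$ and $H^{7}h^{2}=3$ are the content of \Cref{lem:bCflex,lem:cCflex}. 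Plugging in gives
\begin{align*}
12\bigl(9 + 72(d-3) + 84(d-3)^{2}\bigr),
\end{align*}
which is the asserted degree of the orbit closure.

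To pass from the degree to the predegree, the only remaining point is to identify the order of the automorphism group of $C_{\on{flex}}$ as a subscheme of $\P^{2}$. The curve $C_{\on{flex}} = C \cup (d-3)L$, where $L$ is a flex line of the smooth cubic $C$; since $L$ and $C$ have different degrees, an automorphism of $C_{\on{flex}}$ must preserve each component separately. Thus $\on{Aut}(C_{\on{flex}}) \subset PGL_{3}$ is the stabilizer in $\on{Aut}(C)$ of the flex point $p = C \cap L$ (equivalently, of the flex line $L$). For a cubic $C$ of general $j$-invariant, $\on{Aut}(C) \subset PGL_{3}$ has order $18$ and acts transitively on the $9$ flex points, so the stabilizer of a single flex has order $2$. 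Multiplying the degree by this $2$ yields the predegree
\begin{align*}
24\bigl(9 + 72(d-3) + 84(d-3)^{2}\bigr).
\end{align*}

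Given that all three intersection numbers have already been computed and that the automorphism calculation is essentially an exercise, I do not anticipate any genuine obstacle: the proof is pure bookkeeping. The only point that merits a sentence of justification in the write-up is the automorphism count, to confirm that we are in the regime where a general $j$-invariant applies (which is consistent with the definition of $C_{\on{flex}}$ as a \emph{general} smooth cubic together with a flex line).
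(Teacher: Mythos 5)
Your proposal is correct and follows the same route as the paper: the paper's proof is simply "combine \Cref{lem:Cflexpoly}, \Cref{lem:bCflex}, \Cref{lem:cCflex}" together with the observation that $C_{\on{flex}}$ has an order $2$ automorphism group because the generic elliptic curve does. Your orbit--stabilizer justification of the automorphism count (order $18$ acting transitively on the $9$ flexes, hence stabilizer of order $2$) is a slightly more explicit version of the same fact, and your identification $H^9 = 9$ matches the value $a = 108 = 12\cdot 9$ already recorded in \Cref{lem:Cflexpoly}.
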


  \begin{proof}
    Combine \Cref{lem:Cflexpoly}, \Cref{lem:bCflex},
    \Cref{lem:cCflex}. The second statement on the predegree follows
    from the fact that the curve $C_{\on{flex}}[j]$ has an order $2$
    automorphism group, since the generic elliptic curve has an order
    $2$ automorphism group.
  \end{proof}
  
  \subsection{Proof of \Cref{nodecusp}}
  
  We now have all ingredients for the proof of \Cref{nodecusp}.
  Recall the setup in \Cref{sec:frameworknodecusp}.

\begin{proof}[Proof of \Cref{nodecusp}]

Let $p_{i} \in C_0$, $i=1, \dots, \delta$ and $q_{j} \in C_0$,
$j=1, \dots, \kappa$ denote the ordinary nodes and cusps,
respectively.  For each $i = 1, \dots, \delta$ or
$j = 1, \dots, \kappa$ separately, upon conjugating by a suitable
invertible change of coordinates in $PGL(V)$, we obtain $1$-parameter
families $\gamma_{i,1}(t), \gamma_{i,2}(t)$ ,and $\gamma_{j}(t)$
appearing \Cref{nodematrix} and \Cref{cuspmatrix} accordingly.

Recall the variety
\[\mathcal{Y} \subset U \times \mathsf{Inv}(V) \times \P
  \on{Sym}^{d}V^{\vee}\] attached to the family $\alpha$. (See
\Cref{def:Inv} for the definition of $\mathsf{Inv}(V)$.)  Each
$\gamma_{i,1}(t), \gamma_{i,2}(t)$ and $\gamma_{j}(t)$ is full for
$\alpha$ due to \Cref{nodematrix} and \Cref{cuspmatrix} -- the curves
$C_{BN}$ and $C_{\on{flex}}[0]$ have finitely many
automorphisms. Thus, by \Cref{def:Agamma}, we get corresponding
irreducible components
$\mathcal{A}_{i,1}, \mathcal{A}_{i,2}, \mathcal{A}_{j}$ in the fiber
$\mathcal{Y}_0$.

By the $\mathsf{Inv}(V)$-adaptation of \Cref{gamma12}, and by the
statements about kernels and images found in \Cref{rmk:limitendoNode}
and \Cref{rmk:cuspkernel}, it follows that the irreducible components
$\mathcal{A}_{i,1}, \mathcal{A}_{i,2}, \mathcal{A}_{j}$ are all
pairwise distinct.  Therefore, the hypotheses of \Cref{principle2} are
met, and the statement follows.  (The coefficient of $2$ in
$-2 \delta \cdot p_{C_{BN}}$ arises because there are two components
$\mathcal{A}_{i,1}, \mathcal{A}_{i,2}$ corresponding to each node.)

It remains to establish compatibility of predegrees. For this, we once
again reference Aluffi and Faber, specifically \cite[Example 4.1 and
Example 5.2]{AF00} where the defect on the predegree caused by an
ordinary node and cusp are shown to be, respectively
  \begin{align*}
     24 (35 d^2 - 174 d + 213)&=2\cdot 3(24+144(d-3)+140(d-3)^2) \\ 
     72 (28 d^2 - 144 d + 183)&=24(9+72(d-3)+84(d-3)^2) 
  \end{align*}
  These two numbers, fortuitously, happen to be twice the predegree of
  $C_{BN}$ and the predegree of $C_{\on{flex}}[j]$ respectively by
  \Cref{prop:predegreepolyCBN} and \Cref{cor:predegreeCflex}.  The
  theorem follows from \Cref{principle2}.
\end{proof}


  \section{Computation of $\p_{C_{AN}}$ and $\p_{C_{BN}}$}
\label{CAN}
In light of the main results of \Cref{sec:nodecusp}, it is natural to
seek explicit formulas for $\p_{C_{AN}}$ and $\p_{C_{BN}}$. In this
section we provide a method for this computation, and apply it to the
specific case $d=4$. Recall that when $d=4$, $C_{BN}$ is equal to
$C_{D_{6}}$ -- our computation yields the same result as the
computation of $\p_{C_{D_{6}}}$ in \Cref{A6E6orbit}, thankfully.

\begin{prop}\label{prop:CAN}
  When $d=4$,
\begin{align*}
  \p_{C_{BN}}&=3 \cdot 64 (18 c_1^6 + 33 c_1^4 c2 + 12 c_1^2 c_2^2 - 85 c_1^3 c_3 - 11 c_1 c_2 c_3 - 
              7 c_3^2)\\
  \p_{C_{AN}}&=2 \cdot 192 (18 c_1^6 + 33 c_1^4 c_2 + 12 c_1^2 c_2^2 + 19 c_1^3 c_3 - 7 c_1 c_2 c_3 - 
              35 c_3^2).
\end{align*}
\end{prop}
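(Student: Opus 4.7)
The plan is to compute $[O_{C_{BN}}]$ by a direct pushforward calculation from a relative version of the $8$-dimensional parameter space $V$ appearing in the proof of \Cref{lem:predegreepolyCBN}, and then to deduce $[O_{C_{AN}}]$ via \Cref{jdegeneration} and a parallel calculation of $[O_{C_{\on{flex}}}]$.

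For $[O_{C_{BN}}]$, fix a base $B$ carrying a rank $3$ vector bundle $\mc{V}$. Let $\pi\colon \P(\mc{V}) \to B$ be the relative $\P^2$ and let $\mc{Q} = \pi^*\mc{V}/\mc{O}_{\P(\mc{V})}(-1)$ be the tautological rank $2$ quotient, so that $\rho\colon \P(\mc{Q}) \to \P(\mc{V})$ is the $\P^1$-bundle parametrizing pairs $(p, L)$ of a point $p\in \P(\mc{V})$ and a line $L$ through $p$. Over $\P(\mc{Q})$ one constructs a natural rank $6$ vector subbundle $\mc{E}\subset \rho^*\pi^*\on{Sym}^3 \mc{V}^\vee$ whose fiber at $(p,L)$ consists of cubic forms singular at $p$ and having tangent cone containing $L$. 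Then $V_{\mc{V}} := \P(\mc{E})$ is smooth of relative dimension $8$ over $B$, and the multiplication map $(p,L,[C]) \mapsto [C\cdot s_L]$, where $s_L$ is a linear form defining $L$, yields a morphism $\phi\colon V_{\mc{V}} \to \P(\on{Sym}^4\mc{V}^\vee)$. Since a generic quartic $C + L$ in the image determines $C$, $L$ and the node $p$ uniquely, $\phi$ is birational onto $(\P O_{C_{BN}})_\mc{V}$, and one obtains $[(\P O_{C_{BN}})_\mc{V}] = \phi_*[V_{\mc{V}}]$. Expanding this via standard Segre-class formulas through the tower of projective bundles and translating via \Cref{projectivethom} yields the claimed polynomial for $[O_{C_{BN}}]$.

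For $[O_{C_{AN}}]$, \Cref{jdegeneration} gives $[O_{C_{\on{flex}}}] = [O_{C_{AN}}] + 3[O_{C_{BN}}]$, so it suffices to compute $[O_{C_{\on{flex}}}]$. I would repeat the construction above but with $\mc{E}$ replaced by the rank $7$ subbundle $\tilde{\mc{E}}\subset \on{Sym}^3\mc{V}^\vee$ whose fiber at $(p,L)$ consists of cubics meeting $L$ with multiplicity at least $3$ at $p$. The resulting $W_{\mc{V}} = \P(\tilde{\mc{E}})$ has relative dimension $9$, and by the proof of \Cref{lem:Cflexpoly}, the divisor $(W_j)_\mc{V}$ has class $12H$, where $H$ is the pullback of $\mc{O}(1)$ from $\P(\on{Sym}^3\mc{V}^\vee)$, and maps birationally onto $(\P O_{C_{\on{flex}}})_\mc{V}$ under the analogous multiplication map. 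Pushing $12H$ forward and translating via \Cref{projectivethom} yields $[O_{C_{\on{flex}}}]$; subtracting $3[O_{C_{BN}}]$ then gives the desired expression for $[O_{C_{AN}}]$.

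The main technical obstacle is correctly identifying the line bundle of which $s_L$ is a section as $(p,L)$ varies over $\P(\mc{Q})$, and hence keeping track of the twist relating $\phi^*\mc{O}_{\P(\on{Sym}^4\mc{V}^\vee)}(1)$ to $\mc{O}_{\P(\mc{E})}(1)$ and to the pullbacks of the tautological line bundles on $\P(\mc{V})$ and $\P(\mc{Q})$. Once this bookkeeping is set up properly, the pushforwards reduce to mechanical applications of standard projective-bundle formulas and can be verified by machine. As a sanity check, the resulting formula for $[O_{C_{BN}}]$ should agree with $\tfrac{1}{3}\,p_{C_{D_6}}$ from \Cref{A6E6orbit} (yielding the independent verification of Kazarian's formula promised earlier), and the identity $p_{C_{\on{flex}}} = p_{C_{AN}} + 2p_{C_{BN}}$ serves as an internal consistency check.
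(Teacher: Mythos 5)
Your strategy is viable and runs parallel to the paper's: both arguments compute the classes by locating the relevant loci inside a relative space of triples $(C,L,p)$ and pushing forward to $\P(\on{Sym}^4\mc{V}^\vee)$ via the integration trick of \cite[Theorem 3.1]{FNR06}. For $[O_{C_{BN}}]$ your route is arguably cleaner than the paper's: you realize the relative version of $V$ directly as a $\P^5$-bundle $\P(\mc{E})$ over the flag bundle $\P(\mc{Q})$ and push forward its fundamental class (and the multiplication map is indeed birational onto the relative orbit closure, since a general quartic in the image determines $C$, $L$ and the node uniquely), whereas the paper instead extracts $[W_{\infty,BN}]$ as a piece of the Riemann--Hurwitz ramification divisor of the generically finite map $W_{\mc{V}}\to \P(\on{Sym}^3\mc{V}^\vee)$, after identifying and subtracting the conic-plus-line component $Z$. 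For $[O_{C_{AN}}]$ the paper stays inside $W_{\mc{V}}$ and pulls back the relative discriminant, writing it as $3W_{\infty,BN}+W_{\infty,AN}+2Z$; your detour through $[O_{C_{\on{flex}}}]$ and \Cref{jdegeneration} is a legitimate alternative decomposition.

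There is, however, one concrete error that would propagate into a wrong answer for $[O_{C_{AN}}]$: in the relative setting the divisor $(W_j)_{\mc{V}}$ does \emph{not} have class $12H$. The degree-$12$ form cutting out the locus of cubics with fixed $j$-invariant is only $SL_3$-invariant, not $GL_3$-invariant; it transforms by a power of the determinant, so the $j=j_0$ locus in $\P(\on{Sym}^3\mc{V}^\vee)$ has class $12H-12c_1$ for every $j_0$ (for $j_0=\infty$ this is exactly the relative discriminant class $12\mathscr{O}(1)-12c_1$ that the paper's proof uses). The relation $W_j\sim 12H$ from \Cref{lem:Cflexpoly} is only the non-equivariant shadow of this. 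Using $12H$ would give a class for $[O_{C_{\on{flex}}}]$ with the correct predegree (its $c_i=0$ specialization) but with wrong $c_1$-dependent corrections, and hence a wrong $[O_{C_{AN}}]$ after subtracting $3[O_{C_{BN}}]$; note that your proposed consistency check $p_{C_{\on{flex}}}=p_{C_{AN}}+2p_{C_{BN}}$ cannot detect this, since it is the very relation you use to define $[O_{C_{AN}}]$. With the corrected class $12H-12c_1$, and with the line-bundle bookkeeping for the multiplication map that you already flag as the main technical point, your computation should reproduce the stated formulas.
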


\begin{proof}

  In the interest of brevity, we will indicate {\sl how} to arrive at
  the result, by breaking the calculation into several steps and
  explaining how each step is executed.

  First, the factor of $3$ in $\p_{C_{BN}}$ and $2$ in $\p_{C_{AN}}$
  come from automorphisms.  What we will actually describe, and what
  is equivalent by \Cref{projectivethom}, is the calculation of the
  classes $[\Orb(C_{BN})]_{GL(V)}$ and $[\Orb(C_{AN})]_{GL(V)}$,
  essentially using the same overall strategy of ``presentation'' used
  in the proof of \Cref{fourlines}.

  We begin with the variety $\mathcal{W}$ defined in \Cref{def:W},
  with its attendant divisor classes
  $H_{\on{curve}}, H_{\on{line}}, H_{\on{point}}$. We can regard
  $\mathcal{W}$ as an iterated projective bundle, by first forgetting
  $C$, then forgetting $p$, and then forgetting $L$ (to map to a
  point). Each of these projective bundles are given by the
  projectivization of a vector bundle over their associated base
  spaces, and so the Chow ring of $W$ is determined by the chern
  classes of these vector bundles in a standard way.  Furthermore, the
  Picard group of $\mathcal{W}$ is freely generated by the three
  divisor classes $H_{\bullet}$.

  Let
  \[\mathsf{Inc} \subset \P V^{\vee} \times \P V\] denote the standard
  {\sl incidence variety} parametrizing pairs $(L,p)$ where $p \in L$.
  Observe that $\mathcal{W}$ is a projective bundle over
  $\mathsf{Inc}$.

  We let

  \[\phi: \mathcal{W} \to \P \on{Sym}^{4}V^{\vee}\]
  denote the map sending a triple $(C,L,p)$ to the quartic curve
  $C \cup L$, and let $H$ denote the pullback of the hyperplane class
  via $\phi$.  Observe that $H = H_{\on{curve}} + H_{\on{line}}$.

  Finally, recall the irreducible divisors (specified a few lines
  after \eqref{WjW})
  $\mathcal{W}_{\infty,BN}, \mathcal{W}_{\infty,AN}$,
  $\mathcal{W}_{\on{conic}}$ in $\mathcal{W}$. We now provide the
  stages of the calculation.

  \underline{{\bf Steps of the calculation}}:
  \begin{itemize}
  \item [{\bf Step 1}:] The divisor class $[\mathcal{W}_{\on{conic}}]$
    is straightforward to compute because $\mathcal{W}_{\on{conic}}$
    has a simple description. Indeed, $\mathcal{W}_{\on{conic}}$ is a
    projective sub-bundle of the projective bundle
    $\mathcal{W} \to \mathsf{Inc}$.  It is straightforward to access
    this sub-bundle, and the conclusion is:
    \begin{align}
      \label{eq:conic}
      [\mathcal{W}_{\on{conic}}] = H_{\on{curve}} - 3H_{\on{point}} + 3H_{\on{line}}.
    \end{align}
  \item [{\bf Step 2}:] 
   Let
  \[\mathcal{R} \subset \mathcal{W}\] denote the ramification divisor
  of $p_{1}$ -- observe that
  \[p_{1}: \mathcal{W} \to \P \on{Sym}^{3}V^{\vee}\] is unramified
  over the locus parametrizing smooth cubic curves, and is also
  unramified along $\mathcal{W}_{AN,\infty}$. Applying the
  Riemann-Hurwitz formula (i.e. subtracting canonical classes) to
  $p_{1}$ yields the class of the ramification divisor:
  \begin{align}
    \label{eq:R}
    [\mathcal{R}] = 3H_{\on{curve}} + H_{\on{line}} + H_{\on{point}}.
  \end{align}
  $\mathcal{R}$ is supported on $\mathcal{W}_{BN, \infty}$ and
  $\mathcal{W}_{\on{conic}}$. Using the classical fact that a branch
  line of a node is the limit of three flexes,
  $\mathcal{W}_{BN, \infty}$ appears with multiplicity 2 in the
  ramification divisor $\mathcal{R}$. In light of this, by
  concentrating on the $H_{\on{curve}}$ coefficients in
  $[\mathcal{R}]$ and $[\mathcal{W}_{\on{conic}}]$, there is only one
  option for the class $[\mathcal{W}_{BN,\infty}]$:
    \begin{align}
      \label{eq:WBN}
      [\mathcal{W}_{BN,\infty}] = H_{\on{curve}} - H_{\on{line}} + 2H_{\on{point}}.
    \end{align}
  \item [{\bf Step 3}:] Next, to access $\mathcal{W}_{AN,\infty}$ we
    pull back the discriminant hypersurface
    \[\mathsf{Disc} \subset \P \on{Sym}^{3}V^{\vee}\] (parametrizing singular cubics) via
    $p_{1}$. The divisor $p_{1}^{*}(\mathsf{Disc})$ is clearly
    supported on $\mathcal{W}_{BN, \infty}, \mathcal{W}_{AN, \infty}$,
    and $\mathcal{W}_{\on{conic}}$. Since $p_{1}$ is locally $3:1$
    near a general point of $\mathcal{W}_{BN, \infty}$ and unramified
    at a general point of $\mathcal{W}_{AN,\infty}$ we conclude that
    $p_{1}^{*}(\mathsf{Disc}) = 3 [\mathcal{W}_{BN, \infty}] +
    [\mathcal{W}_{AN,\infty}] + z \cdot [\mathcal{W}_{\on{conic}}]$
    for some positive integer $z$.

  \item [{\bf Step 4}:] The integer $z$ above is $2$. This follows
    from the local calculation that a general pointed curve (spectrum
    of a DVR) $(T,0) \subset \mathcal{W}$ intersecting
    $\mathcal{W}_{\on{conic}}$ transversely (at $0 \in T$) at a
    general point $(Q \cup L, L, p)$ will produce a curve in
    $\P \on{Sym}^{3}V^{\vee}$ meeting $\mathsf{Disc}$ with
    multiplicity $2$ at $0$. Here $Q$ and $L$ are a general conic and
    line, respectively. The multiplicity $2$ comes from the fact that
    the total space of the family of curves parametrized by $T$ is
    smooth at the two nodes $Q \cap L$ of the curve $Q \cup L$.

  \item [{\bf Step 5}:] Combining the previous steps, we conclude:
    \begin{align}\label{eq:WAN}
      [\mathcal{W}_{AN, \infty}] = 7H_{\on{curve}} - 3H_{\on{line}}
    \end{align}

  \item [{\bf Step 6}:] We now observe that all objects and
    constructions in the previous step were $GL(V)$-equivariant.  This
    means it is possible to do exactly the same calculations in the
    {\sl relative setting}: $\mathcal{V} \to B$ is now an arbitrary
    rank $3$ vector bundle over a variety $B$.  $\mathcal{W}$ is
    replaced by
    \[ \mathcal{W}_{\mathcal{V}} \subset \P
      \on{Sym}^{3}\mathcal{V}^{\vee} \times \P \mathcal{V}^{\vee}
      \times \P \mathcal{V},\] the divisor classes $H_{\bullet}$
    relativize as the pullbacks of the $\mathcal{O}(1)$ classes on
    each of the bundles
    $\on{Sym}^{3}\mathcal{V}^{\vee}, \P \mathcal{V}^{\vee}, \P
    \mathcal{V}^{\vee}$, etc...

    Let $c_{1}, c_{2}, c_{3}$ denote the chern classes of
    $\mathcal{V}$. By performing the calculations now in the relative
    setting, the analogous divisor classes are:
    \begin{align}
      \label{eq:relativeW}
      H = H_{\on{curve}} + H_{\on{line}},\\ 
      [\mathcal{W}_{BN,\infty}]_{\mathcal{V}} = H_{\on{curve}} - H_{\on{line}} + 2H_{\on{point}}\\
      [\mathcal{W}_{AN, \infty}]_{\mathcal{V}} = 7H_{\on{curve}} - 3H_{\on{line}} - 6c_{1}
    \end{align}
    These are elements in the Chow ring
    $A^{\bullet}(\mathcal{W}_{\mathcal{V}})$.

  \item[{\bf Step 7}:] Recall that our objective is to compute the
    classes $[ \Orb({C_{BN}})]_{GL(V)}$ and
    $[\Orb({C_{AN}})]_{GL(V)}$.  To get there from the previous
    step, we use the exact same trick as in \Cref{fourlines} and
    \cite[Theorem 3.1]{FNR06}.  Letting
    \[\phi: \mathcal{W}_{\mathcal{V}} \to \P
      \on{Sym}^{4}\mathcal{V}^{\vee}\] denote the relative version of
    the map sending $(C,L,p)$ to $C \cup L$, we must pull back the class
    \[\alpha =
      H^{14}+c_1(\on{Sym}^4\mc{V}^{\vee})H^{13}+\cdots+c_{14}(\on{Sym}^4\mc{V}^{\vee})\]
    via $\phi$, intersect with
    $[\mathcal{W}_{BN,\infty}]_{\mathcal{V}}$ (or
    $[\mathcal{W}_{AN,\infty}]_{\mathcal{V}}$ respectively) and
    push-forward to $B$ (using the Leray relation for projective
    bundles).  This yields $[\Orb(C_{BN})]$ and $[\Orb({C_{AN}})]$,
    and concludes the calculation, after applying the change of
    variables \Cref{projectivethom}.

    \end{itemize}

\end{proof}

\begin{rmk}
  The only place $d=4$ was used in the the proof of \Cref{prop:CAN}
  was the definition of class $\alpha$ and the pullback map
  $A^{\bullet}(\mb{P}(\on{Sym}^4\mc{V}^\vee))\to
  A^{\bullet}(W_{\mc{V}})$, meaning that the above provides a recipe
  to get the formulas for $\p_{C_{AN}}$ and $\p_{C_{BN}}$ for all $d$,
  but we have not tried to use it to arrive at a closed expression.
\end{rmk}


\section{Degenerations III: Quartic Plane Curves}
In this section, we finally specialize all the way to the setting
$r=2, d=4$ of quartic curves. We thoroughly study the classical
degeneration to a double conic.  Then we investigate the effect of
acquiring a hyperflex on $\p_{C}$. (We believe the hyperflex
specialization can be analyzed in arbitrary degree, but the algorithm
in \cite{AF00} is too complicated for us to apply with sufficient
confidence.)

\subsection{Degeneration to the double conic}

In this section, we study how $\p_{C}$ changes as a general smooth
quartic $C$ specializes to a double conic.

\subsection{Preliminary lemmas}

\begin{lem}
\label{interpolate}
Let $Q \subset \P V$ be a smooth conic and $p\in Q$ a point. Let $p_1$
and $p_2$ (respectively $p_1$) be points of $\mb{P} V$ so that $p_1$,
$p_2$ and $p$ are not collinear (respectively not lying on the tangent
line to $Q$ at $p$).  Then, there exists a unique smooth conic
$Q' \subset \P V$ meeting $Q$ at $p$ with multiplicity 3 (respectively
4) and containing $p_1$ and $p_2$ (respectively $p_1$).
\end{lem}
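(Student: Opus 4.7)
The plan is to set up convenient coordinates and explicitly parameterize the family of conics having high-order contact with $Q$ at $p$, then translate the passing-through conditions into elementary linear algebra on a low-dimensional linear system.

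First I would choose coordinates so that $Q=\{y^2=xz\}$, $p=[1:0:0]$, and the tangent line to $Q$ at $p$ is $\{z=0\}$; parametrize $Q$ near $p$ by $t\mapsto[1:t:t^2]$. Substituting this parametrization into a general conic $ax^2+bxy+cxz+dy^2+eyz+fz^2$ and demanding vanishing to order $3$ (respectively $4$) at $t=0$ yields $3$ (respectively $4$) independent linear conditions on the coefficients, cutting the $\mb{P}^5$ of conics down to the explicit linear family
\begin{align*}
\lambda(y^2-xz)+\mu yz+\nu z^2=0 \qquad (\text{respectively}\quad \lambda(y^2-xz)+\nu z^2=0).
\end{align*}

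Next I would impose the point conditions. Passing through $q=[x_0:y_0:z_0]$ is the linear functional $(\lambda,\mu,\nu)\mapsto \lambda(y_0^2-x_0z_0)+\mu y_0z_0+\nu z_0^2$, equivalently the point $\phi(q):=[y_0^2-x_0z_0:y_0z_0:z_0^2]\in\mb{P}^{2\vee}$. A short case analysis on whether $z_0=0$ shows $\phi$ is injective on $\mb{P}^2\setminus\{p\}$ away from the tangent line, and collapses the punctured tangent line $\{z=0\}\setminus\{p\}$ to the single point $[1:0:0]$. Hence whenever $p,p_1,p_2$ are not collinear (which in particular excludes both lying on the tangent line), the functionals $\phi(p_1),\phi(p_2)$ are linearly independent, so the first system has a unique solution $[\lambda:\mu:\nu]\in\mb{P}^2$. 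The order $4$ case is analogous but easier: $\phi(p_1)$ gives one nontrivial condition on the $\mb{P}^1$ family as long as $p_1\neq p$, singling out a unique $[\lambda:\nu]$.

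Finally I would verify smoothness. The determinant of the symmetric matrix representing $\lambda(y^2-xz)+\mu yz+\nu z^2$ is a scalar multiple of $\lambda^3$, so the conic is singular exactly when $\lambda=0$. In that case it factors as $z(\mu y+\nu z)$, a union of the tangent line with another line through $p$. If this degenerate conic passed through $p_1$ and $p_2$, then either both lie on $\{z=0\}$ or both lie on $\{\mu y+\nu z=0\}$; in either subcase $p,p_1,p_2$ are collinear, contradicting the hypothesis. In the order $4$ case, $\lambda=0$ forces the curve to be the double tangent line $z^2=0$, which contains $p_1$ only if $p_1$ lies on the tangent line, again contradicting the hypothesis. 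The only real bookkeeping obstacle is cleanly disentangling the two degeneration mechanisms (collapsing onto the tangent line versus collinearity through $p$), but both reduce to the single determinant computation and the injectivity analysis of $\phi$ described above.
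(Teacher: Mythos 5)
Your setup is correct and more explicit than the paper's: the identification of the conics meeting $Q$ to order $3$ (resp.\ $4$) at $p$ with the net $\lambda(y^2-xz)+\mu yz+\nu z^2$ (resp.\ the pencil $\lambda(y^2-xz)+\nu z^2$), the uniqueness of $[\lambda:\mu:\nu]$ via two independent linear point-conditions, and the determinant computation $\det=-\lambda^3/4$ are all right, and your order-$4$ case is complete. The gap is in the last step of the order-$3$ case: when $\lambda=0$ the conic is $z(\mu y+\nu z)$, and your claim that $p_1,p_2$ must then \emph{both} lie on $\{z=0\}$ or \emph{both} lie on $\{\mu y+\nu z=0\}$ is a false dichotomy --- one point can lie on each component. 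This mixed case really occurs, and it is a genuine counterexample to the lemma as stated: take $p=[1:0:0]$, $p_1=[0:1:0]$ (on the tangent line $\{z=0\}$ but distinct from $p$) and $p_2=[0:0:1]$. These three points are not collinear, yet the unique member of the net through $p_1$ and $p_2$ is $yz=0$, which is singular; and indeed no smooth conic can work, since a smooth conic meeting $Q$ to order at least $3$ at $p$ must be tangent to $\{z=0\}$ at $p$, hence meets that line only at $p$ and cannot contain $p_1$.

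So the order-$3$ statement needs the extra hypothesis that $p_1$ and $p_2$ avoid the tangent line to $Q$ at $p$; with that hypothesis your argument closes, since in the case $\lambda=0$ both points are forced onto the line $\mu y+\nu z=0$, which passes through $p$, contradicting non-collinearity. For what it is worth, the paper's own proof has the same blind spot: it rules out $Q'=L_1\cup L_2$ on the grounds that the length-$3$ scheme $Z$ is not contained in a line, but $Z$ can be contained in a union of two distinct lines meeting at $p$ without being contained in either one (again $Z\subset\{yz=0\}$). The strengthened hypothesis is harmless for the only place the lemma is invoked (\Cref{coniclemma}, cases $n=5,6$), because there the relevant tangent line meets the quartic only at its singular point, so the auxiliary points automatically avoid it.
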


\begin{proof}
  Let $Z \subset Q$ be the scheme of length 3 (respectively 4)
  supported at $p\in Q$.  By counting conditions, for both $n=3,4$ we
  see that there is a conic $Q'$ containing $Z$,
  $p_1,\ldots,p_{5-n}$. If $n=3$, then $Q'$ cannot be a double line
  (since $Z$, $p_1$, $p_2$ are not contained in a line), nor can it be
  the union of two distinct lines (since $Z$ is not contained in a
  line, and also $p$, $p_{1}$, $p_{2}$ are not collinear). Therefore
  the conic $Q'$ is smooth. It is unique because of Bezout's theorem.

  If $n=4$, then $Q'$ cannot be a double line since the underlying
  line would then have to be tangent to $Q$ at $p$, but the tangent
  line does not pass through $p_1$ by assumption. We also cannot have
  $Q'$ be the union of two distinct lines or else $Q'$ can only meet
  $Q$ at $p$ with maximum multiplicity 3. Therefore $Q'$ is smooth,
  and again Bezout's theorem provides uniqueness.
\end{proof}

\begin{lem}
\label{coniclemma}
Let $n$ be such that $3\leq n\leq 7$ and let $C \subset \P V$ be a
general quartic curve with an $A_n$ singular point. Then, there exists
a smooth conic meeting $C$ at $C$'s singular point with multiplicity
$n+1$ and meeting $C$ transversely at $7-n$ other points.
\end{lem}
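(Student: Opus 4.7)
The plan is to construct the required smooth conic $Q$ by osculating the branches of $C$ at the singular point $p$, exploiting the explicit analytic structure of the $A_n$ singularity. Set up local coordinates $(x,z)$ centered at $p$ with the tangent line of $C$ at $p$ given by $\{z=0\}$. Any smooth conic $Q$ through $p$ tangent to $\{z=0\}$ has a local graph $z = c_2 x^2 + c_3 x^3 + c_4 x^4 + \cdots$ with $c_2 \neq 0$; its three analytic invariants $c_2, c_3, c_4$ parametrize the $3$-dimensional family of such conics (the higher $c_k$ are determined by these). The branches of $C$ at $p$ admit Puiseux expansions, and $I_p(Q, C)$ decomposes as a sum over branches of orders of vanishing of the corresponding Puiseux differences against $\phi_Q$.

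For the boundary case $n = 7$: a general $A_7$ quartic is necessarily reducible, since its delta invariant exceeds the arithmetic genus $3$ of any irreducible reduced plane quartic. Writing $C = Q_1 \cup Q_2$ as the union of two smooth conics meeting at $p$ with $I_p(Q_1, Q_2) = 4$, I apply \Cref{interpolate} to $Q_1$ with one extra point chosen off $C$: this yields a unique smooth conic $Q$ with $I_p(Q, Q_1) = 4$. Since $Q_1$ and $Q_2$ share their $3$-jet at $p$ (because they osculate to order $4$), the match of $Q$'s $3$-jet with $Q_1$'s automatically forces $I_p(Q, Q_2) = 4$. Hence $I_p(Q, C) = 8 = n+1$, and by B\'ezout $Q \cap C$ is concentrated at $p$, giving the required $7-n = 0$ residual points.

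For $3 \le n \le 6$, I would construct $Q$ by explicit jet-matching to the Puiseux expansion of an appropriate branch of $C$. In the two-branch (odd Milnor) cases, both branches share their low-order Puiseux coefficients up to the tangency order of the singularity, so choosing $(c_2, c_3, c_4)$ to match a distinguished branch of $C$ one step past the common-jet order gives a particular high contact with that branch and a forced (lower) contact with the other; careful branch-by-branch bookkeeping produces the exact value $I_p(Q, C) = n+1$. The unibranch cases proceed analogously using the Puiseux data of the single branch. In each case the jet-matching conditions cut out a positive-dimensional family of smooth conics with the required $I_p$ at $p$, from which \Cref{interpolate} (by imposing one additional generic-point constraint) selects a unique $Q$ while preserving the contact.

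Finally, transversality of the $7-n$ residual intersection points follows from B\'ezout together with a genericity argument: within the positive-dimensional family of conics realizing $I_p(Q, C) = n+1$, the locus where the residual scheme is non-reduced or accumulates extra multiplicity at $p$ is proper, so a generic $Q$ in the family (and generic $C$ in its moduli) yields distinct simple transverse residual intersections. The main obstacle is verifying that $I_p(Q, C)$ jumps exactly to $n+1$ rather than staying at the naive bound of $4$ or overshooting to $n+2$; this requires the case-by-case Puiseux analysis indicated above, leveraging the explicit local normal forms of $A_n$ quartics across the range $3 \le n \le 7$.
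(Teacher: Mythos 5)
Your overall strategy (jet/Puiseux matching at the singular point, followed by a genericity argument for the residual intersection) is broadly the same idea the paper implements, and your $n=7$ argument is essentially complete and correct; it even covers a case that the paper's own proof passes over in silence (a general $A_7$ quartic is a union of two smooth conics with contact $4$ at $p$, and any conic sharing their common $3$-jet meets each of them with multiplicity exactly $4$ at $p$ by B\'ezout, hence meets $C$ nowhere else). However, for $3\le n\le 6$ the proposal defers precisely the steps that carry the content of the lemma. First, the existence of a \emph{smooth} conic with contact exactly $n+1$ is only sketched (``I would construct,'' ``careful branch-by-branch bookkeeping produces''), and you yourself flag the exact-contact verification as ``the main obstacle,'' i.e.\ it is not done. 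Second, and more seriously, the transversality of the $7-n$ residual points is disposed of by asserting that the locus of conics with a non-reduced residual scheme is a proper closed subset of the family. That properness is equivalent to the non-emptiness of the good locus, which is exactly what has to be proved: nothing in your argument rules out, say, that every smooth conic with contact $5$ at an $A_4$ point is forced to be tangent to $C$ at one of its residual points. (For $n=6,7$ the residual scheme has length $\le 1$ and transversality is automatic from B\'ezout once the contact at $p$ is exactly $n+1$; the issue is genuine only for $n=3,4,5$, but there it is the whole point.)

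The paper supplies the missing ingredient by exhibiting explicit good members of each family. For $n=3$ it degenerates the conic to a union of two lines (the prescribed tangent line at $p$ together with a line transverse to $C$), which visibly has four distinct residual points, so a general smooth conic in the linear system does too. For $n=4$ it specializes $C$ to $(X^2+YZ)^2+X^3Y=0$, writes down the conics $X^2+YZ+aXY+bY^2=0$, and checks by a discriminant computation that the residual intersection consists of three distinct points. For $n=5,6$ it reduces to \Cref{interpolate}. If you want to complete your argument, the $n=4$ case in particular will force you into a computation of essentially this kind: the Puiseux expansion does show the contact at $p$ is exactly $5$ once the $2$-jet matches (no conic can match the $x^{5/2}$ term), but it says nothing about the three residual points.
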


\begin{proof}
  We will do this case by case. Let $p\in C$ be the singular point.

  \begin{itemize}
  \item [{\bf Case $n=3$}:] For the case of a tacnode $n=3$, the
    desired conic is required to pass through $p$ with a specified
    tangent direction and otherwise intersect $C$ transversely. There
    is a $3$-dimensional linear system
  \[\Lambda \subset \P \on{Sym}^{2}V^{\vee}\]
  of conics passing through $p$ with a specified tangent direction. In
  $\Lambda$, the set of conics that intersect $C$ at 4 distinct
  points, apart from $p$, is a {\sl nonempty} open set, because the
  union of the unique line passing through $p$ in the specified
  tangent direction together with a line intersecting $C$ transversely
  provides an example.

  Since the set of smooth conics in $\Lambda$ is also nonempty, there
  exists a smooth conic passing through $p$ in the specified tangent
  direction and intersecting $C$ at four other points transversely,
  finishing this case..

\item [{\bf Case $n=4$}:] For the case $n=4$, (a {\sl ramphoid cusp})
  we need to resort to actual equations. The space of conics meeting
  $C$ at $p$ to order 5 is the same as the space of conics containing
  a particular length 3 curvilinear scheme $Z$, and we can assume
  $p=[0:0:1]$ and $Z$ is given by the length 3 neighborhood of
  $X^2+YZ$ around $p$. Since the condition we are trying to realize is
  open, we can specialize $C$ while preserving $p$ and $Z$, and it
  suffices to prove the result for a specialization of $C$. To this
  end, consider the particular rational quartic curve $C_0$ given by
$$(X^2+YZ)^2+X^3Y=0,$$ which has a rhamphoid cusp ($A_4$ singularity) at $[0:0:1]$ and an ordinary cusp at $[0:1:0]$.

Consider the conic $Q_{a,b}$ given by \[X^2+YZ+aXY+bY^2=0.\] If we
reduce the equation of $C_0$ modulo the equation of $Q_{a,b}$, then we
get the same result as reducing the equation
$(X^3+a^2X^2Y+2abXY^2+b^2Y^3)Y$. Therefore, the intersection of $C_0$
with $Q$ (as a scheme) is also given by the intersection of the union
of 4 lines through $p=[0:0:1]$ with $Q_{a,b}$. One of these lines, the
line given by $Y=0$, is tangent to $Q_{a,b}$ at $p$, so it suffices to
check the remaining three lines are distinct for general $a,b$. This
can be shown by noting that the discriminant of the cubic polynomial
$X^3+a^2X^2Y+2abXY^2+b^2Y^3$ does not vanish identically (indeed it is
not even homogenous). Thus, for general $a,b$, $Q_{a,b}$ furnishes the
conic we want.

\item [{\bf Cases $n=5,6$}:]For the cases $n=5,6$, we use
  \Cref{interpolate}. In both cases, we have a curvilinear scheme $Z$
  of length $n-2$ contained in a conic, and we need to find a smooth
  conic containing $Z$ and passing through $7-n$ distinct other points
  of $C$. If $n=5$, then it suffices to pick the remaining 2 points
  $p_1,p_2$ of $C$ so that $p$, $p_1$, and $p_2$ do not all lie on a
  line. If $n=6$, it suffices to pick the remaining point $p_{1}$ to
  not be contained in the tangent line to $Z$.
\end{itemize}
\end{proof}

\subsection{How $C_{A_{6}}$ arises}

In this subsection, let $F(X,Y,Z) \in \on{Sym}^{4}V^{\vee}$ define a
smooth quartic containing the point $[0:0:1]$ and meeting the conic
$Q(X,Y,Z) = X^{2}+YZ$ transversely there.  Next, let
\[\alpha: U \to \P \on{Sym}^{4}V^{\vee}\] be the family defined by
\[t^{3}F + Q^{2}\] where $t$ is a coordinate on $U \subset \mb{A}^{1}$ a
neighborhood of $0$.

Finally, define the $1$-parameter family
\[\gamma: U^{\times} \to PGL(V)\] given by
\[\begin{pmatrix}
    t^{-1} & 0 & 0 \\
    0 & t^{-2} & 0 \\
    0 & 0 & 1\\
  \end{pmatrix}. \]

\begin{lem}
\label{doubleconic}
Let $F(X,Y,Z)$ cut out a quartic plane curve and let
$Q(X,Y,Z)=X^2+YZ$. Suppose $F$ and $Q$ meet transversely at
$[0:0:1]$. Then the curve $\alpha^{\gamma}(0)$ is defined by
\begin{align*}
  \lim_{t\to 0}t^{-4}(t^3F(tX,t^{2}Y,Z)+Q(tX,t^{2}Y,Z))
\end{align*}
and is projectively equivalent to $C_{A_6}$ (from
\Cref{def:specialcurves}.)
\end{lem}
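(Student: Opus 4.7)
The plan is a direct power-series computation in $t$. Write $F=\sum c_{ijk}X^{i}Y^{j}Z^{k}$; the hypothesis $F([0{:}0{:}1])=0$ forces $c_{004}=0$, and transversality of $F$ and $Q$ at $[0{:}0{:}1]$ amounts to the nonvanishing of the coefficient of $F$ that witnesses a tangent direction different from that of $Q$, whose tangent line at $[0{:}0{:}1]$ is $\{Y=0\}$. Under the indicated weighted scaling each monomial of $F$ acquires a $t$-weight $2i+j$, while the three monomials of $Q^{2}$ acquire weights $8,5,2$ respectively. Form the prescribed combination, collect the coefficient of each monomial $X^{i}Y^{j}Z^{k}$ as a polynomial in $t$, and pass to the limit.

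The scaling has been arranged so that, after the $t^{-4}$ normalization, one quartic block proportional to $(X^{2}+YZ)^{2}$ survives in the limit, while among the monomials of $F$ only the single one witnessing the transverse tangent direction attains the matching $t$-order; all other monomials of $F$ carry strictly higher powers of $t$ and drop out. The resulting limit is therefore a quartic of the form $(X^{2}+YZ)^{2}+c\,L\,Z^{3}$ with a nonzero constant $c$ and a distinguished linear form $L$, where $c$ is proportional to the transverse coefficient of $F$ and is nonzero precisely because of the transversality hypothesis.

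To finish, I would verify projective equivalence with $C_{A_{6}}:(X^{2}+YZ)^{2}+2YZ^{3}=0$ by a local singularity analysis. One checks that the limit curve has a unique singular point lying on $\{Q=0\}$ (e.g., at $[0{:}1{:}0]$); after translating that point to the origin of an affine chart and introducing local coordinates with $u=X^{2}+YZ$ as one variable and a suitable local coordinate on $\{Q=0\}$ as the other, a short formal series computation brings the local equation into the form $u^{2}+(\text{unit})\cdot w^{7}=0$, which is the normal form of an $A_{6}$ singularity. Since \Cref{oneorbit} shows that the irreducible rational quartics with a single $A_{6}$ singularity form one $PGL_{3}$-orbit, projective equivalence to $C_{A_{6}}$ follows. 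The main obstacle is this singularity verification: the quadratic leading term $u^{2}$ is immediate, but distinguishing $A_{6}$ from $A_{n}$ with $n\neq 6$ requires tracking the expansion in $w$ through order $7$ by an explicit unfolding, rather than reading off only the quadratic leading part.
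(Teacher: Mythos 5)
Your overall strategy is the paper's: track the $t$-order of each monomial under the weighted scaling, observe that only the $(X^2+YZ)^2$ block and one monomial of $F$ survive at order $t^4$, and identify the resulting quartic with $C_{A_6}$. But the one step that constitutes the actual content of the lemma is asserted rather than carried out, and the numbers you do write down contradict the assertion. You correctly compute that under the substitution as printed ($X\mapsto t^2X$, $Y\mapsto tY$) the three monomials $X^4,\,2X^2YZ,\,Y^2Z^2$ of $Q^2$ acquire $t$-orders $8,5,2$. A block whose monomials carry three different $t$-orders cannot ``survive proportional to $(X^2+YZ)^2$'' under any single normalization: after dividing by $t^4$ the term $Y^2Z^2$ sits at order $t^{-2}$ and dominates everything (every monomial of $t^3F$ has order $\geq 3+2i+j-4\geq 0$ once $c_{004}=0$), so the literal limit in $\mb{P}^{14}$ is the double line-pair $Y^2Z^2=0$, not $C_{A_6}$. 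The exponents in the displayed substitution are swapped (as is the missing square on $Q$): the scaling must be $X\mapsto tX$, $Y\mapsto t^2Y$, $Z\mapsto Z$, so that $Q\mapsto t^2Q$ uniformly and $Q^2\mapsto t^4Q^2$. The paper's proof silently uses this correct scaling (its first line reads $Q(\cdot)^2=t^4Q(\cdot)^2$, which holds only for the swapped weights). Having computed the weights honestly, you were in a position to catch this; instead the middle paragraph declares the desired outcome. Under the corrected scaling the $F$-monomials surviving at order $\le t^4$ are $Z^4$ (coefficient zero since $F(p)=0$) and $XZ^3$, whose coefficient is nonzero exactly because the tangent to $Q$ at $[0:0:1]$ is $\{Y=0\}$ and $F$ is transverse to $Q$ there; the limit is $(X^2+YZ)^2+cXZ^3$, $c\neq 0$.

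Two further points. First, you leave the surviving linear form $L$ unspecified, but this cannot be left vague: $(X^2+YZ)^2+cXZ^3$ has an $A_6$ singularity at $[0:1:0]$, whereas $(X^2+YZ)^2+cYZ^3$ has an $A_5$ there (its quasihomogeneous leading part in local coordinates is $u^2-c x^6$, two distinct branches), so the identity of $L$ is exactly what the transversality hypothesis is buying you. Second, your proposed finish --- a local normal-form computation to order $7$ followed by \Cref{oneorbit} --- is workable (you would also need irreducibility of the limit quartic to invoke \Cref{oneorbit}), but the paper sidesteps it entirely: $(X^2+YZ)^2+cXZ^3$ with $c\neq 0$ is, after rescaling coordinates, precisely the normal form of the rational quartic with an $A_6$ singularity and full-dimensional orbit recorded in \cite[Example 5.4]{AF00}, so no singularity analysis is needed once the limit is in this shape.
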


\begin{proof}
Note $Q(tX,t^{2}Y,Z)^2=t^4Q(X,Y,Z)$. Also, the only coefficients of
$t^3F(tX,t^{2}Y,Z)$ whose vanishing order with respect to $t$ is at
most 4 are the coefficients of $Z^4$ and $Z^3 X$. Since $F$ vanishes
at $p=[0:0:1]$ by assumption, the coefficient of $Z^4$ is zero.

The tangent line to $\{Q=0\}$ at $p$ is given by $Y=0$. Since
$\{F=0\}$ is transverse to $\{Q=0\}$ at $p$, the coefficient of $Z^3X$
is nonzero. Therefore,
\begin{align*}
  \lim_{t\to 0}t^{-4}(t^3F(t^2X,tY,Z)+Q(t^2X,tY,Z)^2)=(X^2+YZ)^2+aZ^3X
\end{align*}
for some constant $a\neq 0 \in \k$, which is the unique, up to
projective equivalence, rational curve with an $A_6$ singularity
having a full dimensional orbit, as found in \cite[Example
5.4]{AF00}. This is the curve $C_{A_{6}}$
\end{proof}

\begin{rmk}
  \label{rmk:imageA6}
  Notice that the image of the $t \to 0$ limit endomorphism
  $\gamma^{-1}$ is the point $p$.
\end{rmk}

\begin{cor}
\label{cor24}
For a general quartic plane curve $C$,

\[ \P \Orb(C) \leadsto 8 \cdot 3 \cdot \P \Orb({C_{A_6}})\] and
therefore
\[\p_C = 8 \p_{C_{A_6}}.\]
\end{cor}

\begin{proof}
  Let $F(X,Y,Z)$ cut out $C$. Pick a conic intersecting $C$
  transversely in 8 points and let $Q(X,Y,Z)$ cut out the conic. Then,
  consider the family of curves over $\mb{A}^1$ given by
\begin{align*}
  t^3F(X,Y,Z)+Q(X,Y,Z)^2.
\end{align*}
Applying \Cref{doubleconic} (after conjugating by appropriate elements
in $PGL(V)$) yields eight $1$-parameter families
$\gamma_i: U\to PGL(V)$, $1\leq i\leq 8$, to be used in
\Cref{principle2}. The principle applies, thanks to the
$\mathsf{Inv}(V)$-variant of \Cref{gamma12} and the observation in
\Cref{rmk:imageA6}.

To conclude, we use either \cite[Example 5.4]{AF00} or
\Cref{A6E6orbit} to see that the predegree of the rational quartic
$C_{A_6}$ is $1785$, and $1785\cdot 8=14280$, which is the predegree
the orbit of a general quartic curve \cite{AF93}. Finally, the factor
of $3$ in front of $\P \Orb(C_{A_6})$ arises because
$\#\on{Aut}(C_{A_6})=3$.
\end{proof}

\begin{thm}
\label{smooth}
For any smooth quartic plane curve $C$ with no hyperflexes,
\begin{align*}
  \p_C = 8 \p_{C_{A_6}}.
\end{align*}
\end{thm}

\begin{proof}
  The result follows from combining \Cref{principle}, \Cref{cor24},
  and the fact that the predegree of a general plane quartic is the
  same as the predegree of any smooth quartic not possessing a
  hyperflex (\cite{AF93}).
\end{proof}

\begin{rmk}
  We remark that our usage of the predegree computations of Aluffi and
  Faber in \cite{AF93} can in principle be replaced by the explicit
  description of the semistable reduction of an $A_n$ singularity
  given in \cite{CL13}.
\end{rmk}

\begin{thm}
\label{Anorbit}
Let $C_{A_n}$ be a general curve with an $A_n$ singularity, where
$3\leq n\leq 6$. Then,
\begin{align*}
  \#\on{Aut}(C_{A_n}) \P \Orb({C_{A_n}})\leadsto (7-n) \cdot 3 \cdot \P \Orb({C_{A_6}})
\end{align*}
and therefore
\[\p_{C_{A_n}}=(7-n)\p_{C_{A_6}}.\]
\end{thm}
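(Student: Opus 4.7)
The plan is to mirror the proof of \Cref{smooth} and \Cref{cor24}, degenerating via the double conic construction, but with a conic chosen to respect the $A_n$ singularity of $C_{A_n}$. The essential change is that only $7-n$ transverse intersection points are available, producing $7-n$ limiting orbits of type $C_{A_6}$ instead of $8$.

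First, I invoke \Cref{coniclemma} to choose a smooth conic $Q=0$ that meets $C_{A_n}$ at its singular point $p_0$ with intersection multiplicity $n+1$ and meets $C_{A_n}$ transversely at $7-n$ additional smooth points $p_1,\dots,p_{7-n}$. Let $F(X,Y,Z)$ be a defining equation for $C_{A_n}$ and $Q(X,Y,Z)$ the equation of the conic, and consider the $\Delta=\mb{A}^1$-family of quartics cut out by
\begin{align*}
t^3 F(X,Y,Z)+Q(X,Y,Z)^2.
\end{align*}
At each transverse intersection $p_i$ ($1\leq i\leq 7-n$), \Cref{doubleconic} provides a one-parameter family $\gamma_i\colon \Delta^{\times}\to PGL_3$ whose limit $\gamma_i(0)\in\mb{P}^8$ has image exactly $p_i$ and whose induced limiting quartic is projectively equivalent to $C_{A_6}$. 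Since the images $p_i$ are distinct, the families $\gamma_i(0)$ lie in pairwise distinct $PGL_3$-orbits on $\mb{P}^8$, so \Cref{principle} applies.

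Applying \Cref{principle} produces the inequality that $p_{C_{A_n}}-(7-n)p_{C_{A_6}}$ is a nonnegative sum of equivariant classes of effective cycles. To upgrade this to the equality claimed, and simultaneously to the specialization $O_{C_{A_n}}\leadsto (7-n)O_{C_{A_6}}$ after accounting for automorphism factors, it suffices to verify that the corresponding predegrees balance, i.e.\ that
\begin{align*}
\#\on{Aut}(C_{A_n})\cdot \deg(O_{C_{A_n}})=(7-n)\cdot 3\cdot 1785.
\end{align*}
This is a direct plug-in into the Aluffi--Faber predegree formula for quartics with an $A_n$ singularity in \cite[Example 5.4]{AF00}, combined with the fact $\deg(O_{C_{A_6}})=1785$ and $\#\on{Aut}(C_{A_6})=3$ used already in \Cref{cor24}.

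The main obstacle, as in \Cref{smooth}, is purely bookkeeping: one must be confident that no further $8$-dimensional orbit appears in the flat limit from the high-order tangency at $p_0$. Crucially, we do not need to analyze the geometry at $p_0$ directly; the predegree identity forces the nonnegative class $p_{C_{A_n}}-(7-n)p_{C_{A_6}}$ to have total degree zero on projective space, which implies it vanishes identically. Hence no unseen contribution at the singular point can exist, and the stated equivalence of weighted orbits follows.
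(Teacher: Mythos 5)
Your argument is essentially identical to the paper's proof: the same conic from \Cref{coniclemma}, the same family $t^3F+Q^2$, the same $7-n$ one-parameter subgroups from \Cref{doubleconic} applied via \Cref{principle}, and the same predegree balance (from \cite[Example 5.4]{AF00}) forcing the effective difference to vanish, so that no unseen contribution at the singular point can occur. The only flaw is a bookkeeping slip in your numerical check: $1785$ is the \emph{predegree} of $C_{A_6}$ (so $\deg O_{C_{A_6}}=595$ and $\#\on{Aut}(C_{A_6})=3$), and the balance should read $\#\on{Aut}(C_{A_n})\cdot\deg(O_{C_{A_n}})=(7-n)\cdot 1785$ rather than $(7-n)\cdot 3\cdot 1785$.
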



\begin{proof}
  By \Cref{coniclemma} we can find a smooth conic that meets $C_{A_n}$
  at its singular point to order $n+1$ and meets $C$ transversely at
  $7-n$ other points $p_1,\ldots,p_{7-n}$. Let $F(X,Y,Z)$ cut out
  $C_{A_n}$ and $Q(X,Y,Z)$ cut out the conic.

  Consider the family of quartic curves given by
\begin{align*}
    t^3 F(X,Y,Z)+Q(X,Y,Z)^2.
\end{align*}
Note, critically, that for {\sl general} $t$, we get a curve with an
$A_n$ singularity. From \Cref{doubleconic} we get, upon conjugating by
appropriate elements of $PGL(V)$, $7-n$ $1$-parameter families
$\gamma_i: U^{\times} \backslash\{0\}\to PGL_3$ to use in
\Cref{principle2}. The rest of the argument is just as in the proof of
\Cref{cor24}.

Applying \cite[Example 5.4]{AF00}, we find that the predegree of
$C_{A_n}$ is $(7-n)$ times the predegree of $C_{A_6}$ if $n\geq 3$,
and so \Cref{principle2} gives the result.
\end{proof}

\subsection{When a quartic acquires hyperflexes}
\label{sec:hyperflex}
Aluffi and Faber studied the situation of a smooth plane curve with no
hyperflexes specializing to a smooth curve possessing a hyperflex in
\cite[Theorem IV(2)]{AF93P}. We will analyze this degeneration, and
instead of using explicit equations as in \cite{AF93P} and as in our
previous degenerations, we found the need to proceed more
conceptually.  The ideas in this section will be familiar to those
readers having experience in the theory of limit linear series, though
we won't use any substantial part of that theory.

First, we need the following lemma characterizing $C_{E_{6}}$.

\begin{lem}
\label{E6}
Let $C \subset\mb{P}^2$ be a rational quartic possessing one $E_6$
singularity and two distinct flex points. Then, $C$ has a full
dimensional orbit, and is therefore projectively equivalent to
$C_{E_6}$.
\end{lem}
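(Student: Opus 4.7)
The plan is to reduce the lemma to an immediate application of \Cref{oneorbit}. Recall that \Cref{oneorbit} states that the irreducible plane quartics with an $E_6$ singularity form a single $8$-dimensional $GL_3$-orbit, namely the orbit of $C_{E_6}$. So I only need to verify that $C$ is irreducible; then $C$ is automatically projectively equivalent to $C_{E_6}$ and in particular has an $8$-dimensional orbit.

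First, I would use the hypothesis that $C$ is rational to conclude irreducibility. A rational curve is irreducible by convention (``rational'' means birational to $\mb{P}^{1}$). If the intended convention were weaker (merely requiring each component to be rational), I would supply a direct irreducibility argument as follows: the $E_6$ singularity is reduced, unibranched, and of multiplicity $3$, so exactly one component of $C$ passes through the singular point, and that component has multiplicity $\geq 3$ there. A degree-$3$ component of multiplicity $3$ at a point would be a union of three concurrent lines through that point, which is neither reduced nor unibranched, contradicting the local structure of $E_6$. Hence the component must be $C$ itself, so $C$ is irreducible.

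Having established irreducibility, \Cref{oneorbit} yields $C\sim C_{E_6}$ under $GL_3$, and hence the $8$-dimensionality of the orbit. The hypothesis that $C$ has two simple flexes is not used in this argument; it is a natural consistency check verifiable from the equation $y^{3}z + x^{4} + x^{2}y^{2}=0$ via a P\"ucker-type count ($24 - 22 = 2$, where $24$ is the flex number of a smooth plane quartic and $22$ is the flex drop at an $E_6$ singularity), and it corresponds to the data one has available in the intended application.

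The main ``obstacle'' is essentially bookkeeping: making sure the proposed chain of implications (rationality $\Rightarrow$ irreducibility $\Rightarrow$ \Cref{oneorbit} applies) is airtight. In particular, if one wanted to use the two-flex hypothesis in an essential way, an alternative plan would be to show directly that the stabilizer of $C$ in $PGL_{3}$ is finite: any positive-dimensional subgroup stabilizing $C$ must fix the $E_6$ point $p$ and its tangent cone (a triple line), cutting the stabilizer down to a torus acting with at most two independent weights; such a torus must also permute the two simple flexes and their tangent lines, and these constraints force the subgroup to be trivial. This alternative, however, is strictly more work than invoking \Cref{oneorbit} and is not needed.
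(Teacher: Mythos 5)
Your reduction to \Cref{oneorbit} does not work, and the hypothesis of two simple flexes that you set aside as a ``consistency check'' is in fact the essential content of the lemma. What the proof of \Cref{oneorbit} actually establishes is that the irreducible quartics with an $E_6$ singularity form an irreducible $8$-dimensional subvariety of $\mb{P}^{14}$ whose \emph{general} member has an $8$-dimensional orbit; it does not show that \emph{every} irreducible quartic with an $E_6$ singularity lies in that dense orbit, and the stronger statement is false. Indeed, the curve $y^3z+x^4=0$ is an irreducible rational quartic whose only singularity is an $E_6$ at $[0:0:1]$, yet it is preserved by the one-parameter subgroup $(x,y,z)\mapsto(x,ty,t^{-3}z)$, so its orbit has dimension at most $7$. (It is obtained from $C_{E_6}$ by the degeneration $y^3z+x^4+t^2x^2y^2\to y^3z+x^4$, under which the two simple flexes collide into a hyperflex at $[0:1:0]$.) This degenerate possibility is exactly what the two-simple-flex hypothesis excludes, and it genuinely occurs in the intended application: in the proof of \Cref{hyperflex}, the case where $\wt{\sigma_1}$ and $\wt{\sigma_2}$ meet $D_1$ at the same point produces precisely such a rational quartic with an $E_6$ singularity, a hyperflex, and an orbit of dimension less than $8$, which is why that argument must iterate the blow-up. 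So the implication ``irreducible with an $E_6$ singularity $\Rightarrow$ projectively equivalent to $C_{E_6}$'' fails, and with it your main argument.

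Your dismissed ``alternative plan'' is essentially the paper's actual proof, and it is what is needed: one shows the identity component $G$ of the stabilizer fixes the $E_6$ point and the two flexes (hence is diagonal in suitable coordinates), and then uses the tangent line at the singularity (which by B\'ezout misses both flexes) together with the flex tangent lines to kill the remaining ratios of eigenvalues. If you prefer a reduction-style argument, you would first need the sharper classification statement that an irreducible quartic with an $E_6$ singularity and two \emph{distinct simple} flexes is projectively equivalent to $y^3z+x^4+x^2y^2=0$; \Cref{oneorbit} as proved is not that statement.
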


\begin{proof}
  We will show $\on{Aut}(C)$ is finite by showing that only a finite
  subgroup of $PGL_3$ preserves the flexes and the unique tangent
  branch at the $E_6$ singularity. Let $G$ be the component of
  $\on{Aut}(C)$ containing the identity.

  Without loss of generality, we can assume the $E_6$ singularity is
  at $[0:0:1]$ and the two flex points are at $[0:1:0]$ and
  $[1:0:0]$. The group $G$ fixes these three points, so $G$ is a
  subgroup of the torus
  \[\begin{pmatrix} a & & \\ & b & \\ & & c\end{pmatrix} \subset
    PGL(V).\]

  In addition, $G$ fixes the tangent line $L$ to the singularity. $L$
  meets the curve to order 4 at $[0:0:1]$, so it cannot intersect
  $[0:1:0]$ or $[1:0:0]$. Since $G$ must preserve $L$, it follows that
  $a=b$.

  Let $L_1$ be the flex line of $C$ at $[0:1:0]$ and $L_2$ be the
  analogue at $[1:0:0]$. By Bezout's theorem we know neither line
  $L_{i}$ contains $[0:0:1]$, and also $L_1\neq L_2$.  Therefore,
  either $L_1$ does not pass through $[0:1:0]$ or $L_2$ does not pass
  through $[1:0:0]$. In the first case, we find $a=c$ and in the
  second case we find $b=c$. In any case, $G$ is a trivial group and
  we are done.
\end{proof}

\begin{rmk}
  Note that in \Cref{E6} we don't assume the two flexes are ordinary
  flexes, i.e. not hyperflexes.  The conclusion ends up forcing this
  to be the case, however.
\end{rmk}

Now, let us assume we have a family
\[\alpha_{\on{h.flex}} : U \to \P \on{Sym}^{4}V^{\vee}\] which obeys
the following properties:
\begin{enumerate}
\item The family of curves $\pi: \mathcal{C} \to U$ is a smooth
  morphism,
\item the curve $C_{u}$, $u \in U$ general, does not have any
  hyperflex,
\item the curve $C_{0}$ has a hyperflex at $p$,
\item the family $\pi$ has two sections
  $\sigma_{1,2}: U \to \mathcal{C}$ meeting transversely at
  $p = \sigma_{1}(0) = \sigma_{2}(0)$ and such that $\sigma_{i}(u)$ is
  a flex point of $C_{u}$ for $u \in U$ general.
\end{enumerate}

In order to extract the suitable $1$-parameter family $\gamma$, we
will essentially perform an ``elementary transformation,''
$\P V \dashrightarrow \P V$, though the reader need not be familiar
with this concept.  

Let
\[\varphi: \mathcal{C} \to \P V\] denote the natural map, and let
$\mathcal{L}$ denote the line bundle $\varphi^{*}\mathcal{O}(1)$.
Suppose we have selected homogeneous variables so that the homogeneous
coordinate $X$ vanishes with order $4$ at $p$ along $C_{0}$, the
homogeneous coordinate $Y$ vanishes with multiplicity $1$ at $p$
(i.e. $Y=0$ is transverse to $C_{0}$ at $p$) and the homogeneous
coordinate $Z$ does not vanish at $p$. (Thus, $p = [0:0:1]$, and $X=0$
is the hyperflex line of $C_0$.) Abusing notation, we let $X, Y, Z$
denote the corresponding pulled back sections of $\mathcal{L}$ as
well.

Now, let $\mathcal{C}'$ denote the blow up of $\mathcal{C}$ at the
point $p$, and let \[\pi_{1}:\mathcal{C}' \to \mathcal{C}\] be the
blow-down map with exceptional curve $E$.  $E$ is a rational curve,
and the family of curves $\mathcal{C}' \to U$ has a two-component
fiber over $0$, namely $C_{0} \cup E$, with the two curves
intersecting transversely the point $p = C_{0} \cap E$.

Consider the line bundle
\[\mathcal{L}' := \pi_{1}^{*}\mathcal{L}(-4E).\]
Then $\pi_{1}^{*}X, \pi_{1}^{*}Y,$ and $\pi_{1}^{*}Z$ are rational
sections of $\mathcal{L}'$ with pole orders $0$, $3$, and $4$,
respectively along $E$.  Therefore, the sections
\[\pi_{1}^{*}X, t^{3}\pi_{1}^{*}Y. t^{4}\pi_{1}^{*}Z\] are regular
sections of $\mathcal{L}'$ on $\widetilde{\mathcal{C}}$, and they
vanish with multiplicity $0, 3, 4$ respectively along $C_{0}$. None of
them vanish entirely on $E$, and therefore, on $E$ they vanish with
orders $0$, $3$, and $4$ at the point $p \in E$.

Now consider the (regular) map
\begin{align}
  \label{newmap}
  \varphi': \widetilde{\mathcal{C}} \to \P^{2}
  \end{align}
  given by $[\pi_{1}^{*}X: t^{3}\pi_{1}^{*}Y: t^{4}\pi_{1}^{*}Z]$.  By
  what we have just said,
\begin{enumerate}
\item $\varphi'(C_0) = [1:0:0]$
\item the degree $4$ map $\varphi'|_{E}: E \to \P^{2}$ is birational
  onto its image $\overline{E}$: this is because the section with
  vanishing order $3$ at $p$ could not exist otherwise, as $3$ is
  relatively prime to $4$.
\item $\overline{E}$ has a uni-branched triple point at the point
  $ \varphi'|_{E}(p)$. (No other point $q \neq p$ can map to this
  point because the section $t^{4}\pi_{1}^{*}Z$ would vanish at a
  scheme of length $\geq 5$ on $E$, meaning it would be identically
  zero, which it isn't.)
\end{enumerate}

Therefore, the curve $\overline{E} \subset \P^{2}$ is a quartic with
an $E_{6}$ singularity at $\varphi'(p)$.  $\overline{E}$ can have no
other singularity, otherwise Bezout's theorem is violated.

If $\sigma_{i}': U \to \widetilde{\mathcal{C}}$, $i=1,2$ denote the
proper transforms of the sections $\sigma_{i}$, we note that by our
assumptions on the family $\alpha_{\on{h.flex}}$,
$p_{1} := \sigma_{1}'(0)$ and $p_{2} := \sigma_{2}'(0)$ are distinct
points of $E$, both distinct from $p$, and furthermore their
corresponding points $\overline{p}_{1}, \overline{p_{2}} \in E$ are
flex points. Therefore, by \Cref{E6}, we conclude that $\overline{E}$
is projectively equivalent to the curve $C_{E_{6}}$.

\begin{lem}
  \label{lem:E6}
  Maintain the setting in the discussion immediately prior.  If
  \[\gamma: U^{\times} \to PGL(V)\] is the $1$-parameter family
  defined by
  \[\begin{pmatrix} 1 & 0 & 0 \\
      0 & t^3 & 0\\
      0 & 0 & t^4\\
    \end{pmatrix}\] then $\gamma$ is full for $\alpha_{\on{h.flex}}$
  and $\alpha_{\on{h.flex}}^{\gamma}(0)$ is a curve projectively
  equivalent to $C_{E_{6}}$.
\end{lem}

\begin{proof}
  The map $\varphi': \mathcal{C}' \to \P^{2}$ from \eqref{newmap}
  resolves the composite
  \[\mathcal{C} \to \P V \dashrightarrow \P V\] where the first map is
  $\varphi$ and the second rational map sends a point $[x:y:z]$ to
  $[x:t^3y:t^4z]$.  Therefore, with $\gamma$ as stated in the lemma,
  the fact that $\overline{E} = C_{E_{6}}$ completes the proof.
\end{proof}

\begin{rmk}
  \label{rmk:E6} Not that the limiting endomorphism $\gamma(0)$ has
  kernel space equal to the flex line $X=0$ of $C_0$.
\end{rmk}

\begin{rmk}
  \label{rmk:generalE6}
  Condition (4) imposed on our family $\alpha_{\on{h.flex}}$ is not
  strictly necessary.  Possibly after performing a base change, and
  then blowing up $\mathcal{C}$ iteratively until the sections
  $\sigma_{i}$ become separated, we arrive at a specialization to the
  curve $C_{E_{6}}$.  We omitted this for brevity.
\end{rmk}

\begin{thm}
\label{allsmooth}
Let $C$ be a smooth quartic plane curve with $n$ hyperflexes. Then,
\begin{align*}
 \p_C = 8\p_{C_{A_6}}-n\p_{C_{E_6}}.
\end{align*}
\end{thm}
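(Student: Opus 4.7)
The plan is to adapt the proof of \Cref{smooth} by introducing one extra specialization for each hyperflex of $C$. First I would let $F$ cut out a general smooth plane quartic $D$ (which in particular has no hyperflexes and trivial automorphism group) and let $G$ cut out $C$. Consider the pencil $tF + G$ over $\Delta$, whose special fiber is $C$ and whose generic fiber is projectively equivalent to $D$. I will apply \Cref{principle} to this pencil using the identity family together with $n$ further families coming from \Cref{hyperflex}.

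Concretely, let $\gamma_0: \Delta \to PGL_3$ be the constant family equal to the identity, whose extension evaluated at $0$ is the identity matrix in $\mb{P}^8$ and whose associated limit curve is $C$ itself. For each hyperflex point $p_i \in C$ with $1 \le i \le n$, \Cref{hyperflex}, applied to the family $tF + G$ after an appropriate base change of $\Delta$, produces a $1$-parameter family $\gamma_i: \Delta^\times \to PGL_3$ whose extension $\overline\gamma_i(0) \in \mb{P}^8$ is a rank one matrix with image $p_i \in \mb{P}^2$, and such that the associated limit curve in $\mb{P}^{14}$ is projectively equivalent to $C_{E_6}$. The images $\overline\gamma_i(0) \in \mb{P}^8$ are pairwise distinct, since $\overline\gamma_0(0)$ has rank three while the remaining $\overline\gamma_i(0)$ have rank one with pairwise distinct images $p_i$; thus the distinctness hypothesis of \Cref{principle} is satisfied.

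The main obstacle is the predegree matching required at the end of \Cref{principle}: I need that the predegree of $D$ equals the predegree of $C$ plus $n$ times the predegree of $C_{E_6}$. This is, modulo tracking of $\#\on{Aut}$ factors, the content of Aluffi and Faber's Theorem IV(2) in \cite{AF93P}, and is precisely the ingredient that is being ``borrowed and adapted'' according to the introduction; concretely, each hyperflex decreases the predegree of a smooth quartic's orbit by $\#\on{Aut}(C_{E_6})$ times the degree of $\P O_{C_{E_6}}$. With that predegree identity in hand, \Cref{principle} upgrades the effectivity statement into the specialization
\[ \#\on{Aut}(D)\, O_D \;\leadsto\; \#\on{Aut}(C)\, O_C \;+\; n \cdot 2\, O_{C_{E_6}}, \]
or equivalently $p_D = p_C + n\, p_{C_{E_6}}$. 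Applying \Cref{smooth} to the general quartic $D$ (which has no hyperflexes) gives $\#\on{Aut}(D)\, O_D \sim 8(3\, O_{C_{A_6}})$ and $p_D = 8\, p_{C_{A_6}}$, and rearranging yields the desired identities $\#\on{Aut}(C)\, O_C \sim 8(3\, O_{C_{A_6}}) - n(2\, O_{C_{E_6}})$ and $p_C = 8\, p_{C_{A_6}} - n\, p_{C_{E_6}}$.
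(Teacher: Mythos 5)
Your proposal is correct and follows essentially the same route as the paper: the paper likewise degenerates a general (hyperflex-free) smooth quartic $C'$ to $C$, applies \Cref{hyperflex} at each of the $n$ hyperflexes together with \Cref{principle} to get that $p_{C'}-p_C-np_{C_{E_6}}$ is effective, matches predegrees using the $294n$ drop from Aluffi--Faber and the predegree $294$ of $C_{E_6}$, and concludes with \Cref{smooth}. Your explicit verification that the limits $\overline\gamma_i(0)\in\mb{P}^8$ are pairwise distinct (rank considerations and distinct images $p_i$) is a detail the paper leaves implicit.
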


\begin{proof}
  We consider a family of smooth quartic curves, where the general
  member $C'$ has no hyperflexes, and with $C$ as the special fiber.

  Applying \Cref{lem:E6}, once per hyperflex, (also see
  \Cref{rmk:generalE6}) and using \Cref{principle} (\Cref{rmk:E6}
  shows that the hypotheses are met), we see that it suffices to check
  that the predegree of $C'$ is the predegree of $C$ plus $n$ times
  the predegree of $C_{E_6}$. The predegree of $C$ is $294n$ less than
  the predegree of $C'$ \cite[Section 3.6]{AF93}. Also, the predegree
  of $C_{E_6}$ is 294 from \cite[bottom of page 36]{AF00} or
  \Cref{A6E6orbit} (noting $\#\on{Aut}(C_{E_6})=2$).

  Finally, we use $\p_{C'}=8\p_{C_{A_6}}$ from \Cref{smooth}.
\end{proof}


\appendix
\section{Points on $\mb{P}^{1}$}
\label{sec:pointsonline}
This appendix will showcase two computations of $\p_X$ in the case $X$
is a hypersurface in $\mathbb{P}^1$, i.e. a set of points with
multiplicities.  In the case $X\subset \mathbb{P}^1$ is supported on
at most three points, these are strata of coincident root loci, which
were first computed in \cite{FNR06} and generalized to
$PGL_2$-equivariant cohomology in \cite{PGL2}. Therefore, we only have
to deal with the case where $X$ is supported on at least four points,
and we give two separate proofs. For formulas with fewer signs, and
also because our conventions are opposite to the conventions in
\cite{FML}, we will actually compute the class $\p_X(-u,-v)$ instead
of $\p_X(u,v)$.  Here $u,v$ are the chern roots of the universal rank
$2$ vector bundle $V$ over $\mathbb{B} GL(V)$.
\begin{thm}
\label{points}
Let $X\subset\mathbb{P}^1$ be a subscheme of length
$d = m_1 + \ldots + m_n$ supported on points $p_1,\ldots,p_n$ with
multiplicities $m_1,\ldots,m_n$ and with $n\geq 3$. Then,
\begin{align*}
  \p_X(-u,-v) = \frac{\prod_{i=0}^{d}{(iu+(d-i)v)}}{(u-v)^2}\left(\frac{n-2}{duv}+\sum_{i=1}^{n}{\frac{2m_i-d}{(m_iv+(d-m_i)u)(m_iu+(d-m_i)v)}}\right)
\end{align*}
\end{thm}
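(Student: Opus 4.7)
The plan is to apply the Atiyah-Bott localization formula to a $GL_2$-equivariant resolution of the orbit closure $\overline{O_X} \subset \on{Sym}^d V^\vee$. Let $T \subset GL_2$ be the standard maximal torus, acting on $V$ with characters $u, v$, so that $\mb{P}(\on{Sym}^d V^\vee)$ has $d+1$ isolated $T$-fixed points $[X^i Y^{d-i}]$ of weight $iu + (d-i)v$. Since the cases $n \le 3$ are coincident root loci covered by \cite{FNR06, PGL2}, I will focus on $n \ge 4$, where the generic $PGL_2$-stabilizer of $X$ is trivial.

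First I will invoke the $GL_2$-equivariant resolution $\pi \colon \widetilde{O}_X \to \overline{O}_X$ of \cite{AF93P}, which arises by iteratively blowing up the indeterminacy locus of the orbit map $GL_2 \dashrightarrow \on{Sym}^d V^\vee$ along loci where labeled points of the configuration collide on $\mb{P}^1$. Second I will describe the $T$-fixed locus of $\widetilde{O}_X$: the underlying $T$-fixed configurations are obtained by sending a subset $S \subset \{1, \ldots, n\}$ of the labeled points to $0 = [1{:}0]$ and the complement to $\infty = [0{:}1]$, landing in the monomial $X^s Y^{d-s}$ with $s = \sum_{i \in S} m_i$; the resolution lifts each such specialization to finitely many $T$-fixed points of $\widetilde{O}_X$ supported on exceptional divisors and indexed by the order in which collisions were resolved. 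I will then read off the $T$-weights on each tangent space $T_p \widetilde{O}_X$ directly from the blowup tree.

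Third I apply Atiyah-Bott,
\[
  \pi_*(1) \;=\; \sum_{p \in \widetilde{O}_X^T} \frac{1}{e_T(T_p \widetilde{O}_X)},
\]
to express $[\overline{O_X}]_{GL_2}$ as the pushforward of $1$, multiply by $\#\on{Aut}(X)$ to obtain $p_X$, and apply the sign convention $(u,v) \mapsto (-u,-v)$. Under this convention, the equivariant Euler class $e_T(\on{Sym}^d V^\vee) = \prod_{i=0}^d (iu + (d-i)v)$ factors out as the numerator of the stated formula, while the $(u-v)^2$ in the denominator encodes the two non-Cartan root weights of $\mathfrak{gl}_2$ that appear on the directions transverse to each generic $T$-fixed configuration.

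The hard part will be the combinatorial collapse of the many terms (summed over subsets $S$ and their resolution-refinements) into the compact expression
\[
  \frac{n-2}{duv} \;+\; \sum_{i=1}^n \frac{2m_i-d}{(m_iv+(d-m_i)u)(m_iu+(d-m_i)v)};
\]
I expect this to follow from a careful partial-fractions identity in $u,v$ once the Atiyah-Bott contributions are grouped by their $\mb{P}^d$-image. As the second, independent derivation advertised in the introduction, the same formula can also be deduced by specializing the $GL_2$-equivariant orbit class of a generic configuration of $d$ distinct points — the simplest hyperplane arrangement case treated in \cite{FML} — along a flat family colliding points into the prescribed multiplicities $m_1, \ldots, m_n$, and tracking the limit cycles via the degeneration framework of \Cref{degenerationsection}.
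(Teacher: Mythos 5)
Your high-level strategy matches the paper's second proof (Appendix B): resolve the orbit map and integrate by Atiyah--Bott localization. But two of your concrete steps are wrong, and they are exactly the steps that carry the content of the argument. First, the Aluffi--Faber resolution for points on a line is not an iterated blowup along collision loci: the orbit map is $\mb{P}\on{Hom}(V,\mb{C}^2)\cong\mb{P}^3\dashrightarrow \mb{P}(\on{Sym}^dV)$, its base locus is the union of $n$ \emph{disjoint lines} $R_i$ (rank-one maps with image $p_i$), and a single blowup of $\mb{P}^3$ along these lines resolves it. Second, and more seriously, your enumeration of $T$-fixed configurations by subsets $S\subset\{1,\dots,n\}$ is incorrect. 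A one-parameter degeneration of a rank-two matrix to a rank-one matrix $A(0)$ sends every $p_i$ not in $\ker A(0)$ to the single image point, and at most one $p_i$ lies in the kernel; so the only boundary configurations are $d\cdot q$ and $m_i\, q' + (d-m_i)\, q$. Consequently the fixed locus of the resolution is not $2^n$ isolated points but two \emph{one-dimensional} components (the proper transforms of the two $\mb{P}^1$'s of rank-one matrices with fixed image) together with $2n$ isolated points lying over their intersections with the $R_i$. Your localization formula $\pi_*(1)=\sum_p 1/e_T(T_p)$ presupposes isolated fixed points and does not apply; one must compute the normal bundles of the two fixed curves (which requires controlling how the blowup changes them at the $n$ points where each meets the exceptional locus — this is where the factor $(1+(1-n)z+v-u)$ and ultimately the term $(n-2)/duv$ come from) and integrate over them. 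The $(u-v)^2$ in the denominator arises from these integrals, not from roots of $\mathfrak{gl}_2$. The "combinatorial collapse over subsets $S$" you defer to the end therefore has nothing to collapse to; with the correct fixed locus the sum has only $2n+2$ terms and assembles directly into the stated expression.

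Your proposed second derivation also diverges from the paper's. The paper's first proof does not degenerate a configuration of $d$ distinct points into one with multiplicities; it applies the general hyperplane-arrangement formula of \cite[Theorem 12.5]{FML}, which already handles arbitrary multiplicities, and reduces to a three-step symbolic computation (Lagrange interpolation, substitution, coefficient extraction). A collision degeneration of orbit closures would generically produce extra limit components beyond $O_X$ (this is the whole point of \Cref{principle}), so as sketched it does not yield the formula without substantial additional work identifying and discarding those components.
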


We give a proof of \Cref{points} using the resolution given by Aluffi
and Faber \cite{AFpoint} together with the Atiyah-Bott formula
\cite{EG98b} in \Cref{AB}. This proof is self-contained and
direct. The second proof we give is from the machinery developed in
\cite{FML} that apply to arbitrary hyperplane arrangements. A
computation is required to specialize the results from the case of
ordered points on $\mb{P}^1$ to unordered points on $\mb{P}^1$, we do
this now.
\begin{proof}[Proof using \cite{FML}]
  We use the same argument in \cite[Theorem 12.5]{FML}, so we only describe the computation, and refer the motivation and proof of correctness to \cite{FML}. Because our sign convention is opposite that of \cite{FML}, we will actually compute $\p_X(-u,-v)$. Let $d=\sum_{i=1}^{n}{m_i}$ and $G(z)=\prod_{i=0}^{d}{(H+iu+(d-i)v)}\in \mathbb{Z}[u,v][z]$. Let $L(z)=\frac{G(z)-G(0)}{z}$. Let $\overline{L}(H_1,\ldots,H_n)$ be the result of reducing $L(m_1H_1+\cdots+m_nH_n)$ modulo $(H_i+u)(H_i+v)$ for each $i$. Now, we carry out the three steps in the proof of \cite[Theorem 12.5]{FML}.\\
  \textbf{Step 1} By Lagrange interpolation,
\begin{align*}
    L(m_1H_1+\cdots+m_nH_n)&=\frac{G(m_1H_1+\cdots+m_nH_n)-L(0)}{m_1H_1+\cdots+m_nH_n}\\
    \overline{L}(H_1,\ldots,H_n)&=\sum_{T\subset \{1,\ldots,n\}}\frac{-G(0)}{-\sum_{i\in T}{m_iv}-\sum_{i\notin T}{m_i u}}\left(\prod_{i\in T}\frac{H_i+u}{-v+u}\right)\left(\prod_{i\notin T}\frac{H_i+v}{-u+v}\right).
\end{align*}\\
\textbf{Step 2} Substituting $z$ for each $H_i$ yields
\begin{align}
    \overline{L}(z,\ldots,z) &= G(0)\sum_{T\subset \{1,\ldots,n\}}\frac{1}{\sum_{i\in T}{m_iv}+\sum_{i\notin T}{m_i u}}\frac{(z+u)^{\# T}(z+v)^{d-\#T}}{\prod_{i\in T}{(-v+u)}\prod_{i\notin T}{(-u+v)}}.\label{z}
\end{align}
\textbf{Step 3} Let $F(z)=(z+u)(z+v)$. All terms of \eqref{z} are
divisible by $F(z)^2$ unless $\# T\in \{0,1,n-1,n\}$. Thus,
$[z^1][F(z)^1]\overline{L}(z,\ldots,z)$ is
\begin{align*} 
\frac{G(0)}{(u-v)^n}[z^1][F(z)^1]\left(\frac{(-1)^n(z+v)^n}{du}+\frac{(z+u)^n}{dv}+\sum_{i=1}^{n}{\frac{(-1)^{n-1}F(z)(z+v)^{n-2}}{m_iv+(d-m_i)u}}+\sum_{i=1}^{n}{\frac{(-1)F(z)(z+u)^{n-2}}{m_iu+(d-m_i)v}}\right). 
\end{align*}
As in the proof of \cite[Theorem 12.5]{FML},
\begin{align*}
  [z^1][F(z)^1]F(z)(z+u)^k &= (u-v)^{k-1} \qquad [z^1][F(z)^1](z+u)^k=(k-2)(u-v)^{k-3}\\
  [z^1][F(z)^1]F(z)(z+v)^k &= (v-u)^{k-1} \qquad [z^1][F(z)^1](z+v)^k=(k-2)(v-u)^{k-3},    
\end{align*}
so $[z^1][F(z)^1]\overline{L}(z,\ldots,z)$ simplifies to
\begin{align*}
  &\frac{G(0)}{(u-v)^n}\left(\frac{(-1)^n(n-2)(v-u)^{n-3}}{du}+\frac{(n-2)(u-v)^{n-3}}{dv}+\sum_{i=1}^{n}\frac{(-1)^{n-1}(v-u)^{n-3}}{m_iv+(d-m_i)u}+\sum_{i=1}^{n}\frac{(-1)(u-v)^{n-3}}{m_iu+(d-m_i)v}\right)\\
  &\frac{G(0)}{(u-v)^n}\left(\frac{(-1)(n-2)(u-v)^{n-3}}{du}+\frac{(n-2)(u-v)^{n-3}}{dv}+\sum_{i=1}^{n}{\frac{(u-v)^{n-3}}{m_iv+(d-m_i)u}+\frac{(-1)(u-v)^{n-3}}{m_iu+(d-m_i)v}}\right)\\
  &\frac{G(0)}{(u-v)^n}\left(\frac{(n-2)(u-v)^{n-2}}{duv}+\sum_{i=1}^{n}{\frac{(2m_i-d)(u-v)^{n-2}}{(m_iv+(d-m_i)u)(m_iu+(d-m_i)v)}}\right)\\
  &\frac{G(0)}{(u-v)^2}\left(\frac{n-2}{duv}+\sum_{i=1}^{n}{\frac{2m_i-d}{(m_iv+(d-m_i)u)(m_iu+(d-m_i)v)}}\right)
\end{align*}
\end{proof}

In the case all the multiplicities are all one, the formula in
\Cref{points} simplifies. We will also give a direct proof by slow
projection immediately after this corollary.
\begin{cor}
  \label{multiplicityone}
In the setting of \Cref{points} if each $m_i=1$, then $n=d$, and
\begin{align*}
  \mathsf{P}_X(-u,-v) = n(n-1)(n-2)\prod_{j=2}^{n-2}{(H+(ju+(n-j)v))}.
\end{align*}
\end{cor}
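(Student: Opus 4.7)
The plan is to derive the simplified formula by directly specializing \Cref{points} to $m_{i}=1$ for all $i$ (so $d=n$), followed by an elementary algebraic simplification; as an alternative, we outline the ``slow projection'' approach alluded to in the text. We note that the $H$ appearing in the statement is consistent with the projective version of the class via \Cref{projectivethom}, while the affine expression produced by \Cref{points} does not involve $H$.

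With all $m_{i}=1$, each summand of $\sum_{i=1}^{n}\frac{2m_{i}-d}{(m_{i}v + (d-m_{i})u)(m_{i}u + (d-m_{i})v)}$ equals $\frac{2-n}{((n-1)u+v)((n-1)v+u)}$, so the sum collapses to $\frac{n(2-n)}{((n-1)u+v)((n-1)v+u)}$. Combined with $\frac{n-2}{nuv}$ over a common denominator, this gives
\begin{align*}
\frac{(n-2)\bigl[((n-1)u+v)((n-1)v+u)-n^{2}uv\bigr]}{nuv\cdot((n-1)u+v)((n-1)v+u)}.
\end{align*}
The crucial algebraic identity is
\begin{align*}
((n-1)u+v)((n-1)v+u) - n^{2}uv = (n-1)(u-v)^{2},
\end{align*}
which is verified by direct expansion: the left-hand side equals $((n-1)^{2}+1)uv + (n-1)(u^{2}+v^{2})$, and subtracting $n^{2}uv$ yields $(n-1)(u-v)^{2}$. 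Substituting back into \Cref{points}, the $(u-v)^{2}$ cancels the denominator $(u-v)^{2}$ there. In the numerator $\prod_{i=0}^{n}(iu + (n-i)v)$, the four factors corresponding to $i \in \{0, 1, n-1, n\}$ (namely $nv$, $u+(n-1)v$, $(n-1)u+v$, $nu$) cancel exactly with the remaining denominator $nuv\cdot((n-1)u+v)((n-1)v+u)$, leaving $n\cdot\prod_{j=2}^{n-2}(ju+(n-j)v)$. Combining with the prefactor $(n-1)(n-2)$ gives the claimed expression $n(n-1)(n-2)\prod_{j=2}^{n-2}(ju+(n-j)v)$.

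For the promised direct proof via slow projection, the strategy is to construct a resolution of the rational orbit map $PGL_{2} \dashrightarrow \mb{P}(\on{Sym}^{n}V^{\vee})$ by exploiting the $3$-transitivity of $PGL_{2}$. Any ordered triple of distinct roots of a generic degree-$n$ form can be sent to a fixed triple $(l_{1}, l_{2}, l_{3})$ by a unique element of $PGL_{2}$; relativizing over a base with bundle $\mc{V}$, one obtains an incidence correspondence in $\mb{P}(\mc{V}^{\vee})^{3}\times_{B}\on{Sym}^{n}\mc{V}^{\vee}$ given by pairs $((l_{1}, l_{2}, l_{3}), F)$ with $l_{1}l_{2}l_{3}$ dividing $F$. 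A chern class computation---using the fact that ``divisibility by $l_{1}l_{2}l_{3}$'' identifies the fiber with $S_{1}\otimes S_{2}\otimes S_{3}\otimes\on{Sym}^{n-3}\mc{V}^{\vee}$ for the tautological subbundles $S_{i}$---should reproduce the formula, with the combinatorial factor $n(n-1)(n-2)$ counting ordered triples of roots and the product $\prod_{j=2}^{n-2}(ju + (n-j)v)$ arising as the chern class contribution of the ``interior'' weight spaces.

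The algebraic simplification is essentially mechanical, the only nontrivial step being the identity $(n-1)(u-v)^{2}$. The main obstacle in executing the slow projection argument is setting up the resolution correctly---in particular, handling contributions from forms with fewer than three distinct roots---and then evaluating the equivariant pushforward in closed form to recover the factorization.
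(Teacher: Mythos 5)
Your direct specialization of \Cref{points} is correct and is essentially the paper's own proof: the same substitution $m_i=1$, the same key identity $((n-1)u+v)((n-1)v+u)-n^2uv=(n-1)(u-v)^2$, and the same cancellation of the boundary factors $i\in\{0,1,n-1,n\}$, followed by \Cref{projectivethom} to introduce $H$. (Your secondary ``slow projection'' sketch is not the paper's slow-projection argument, which instead degenerates the orbit $T$-equivariantly onto a linear subspace using the fact that boundary configurations are supported on at most two points, but your main computation already suffices.)
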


\begin{proof}[Proof using \Cref{points}]
  Applying \Cref{points}, we find $\p_X(-u,-v)$ is
\begin{align*}
\frac{1}{(u-v)^2}\prod_{i=0}^{n}{(iu+(n-i)v)}\left(\frac{n-2}{nuv}-\frac{(-2+n)n}{((n-1)u+v)((n-1)v+u)}\right)&=\\
\frac{n(n-2)}{(u-v)^2}\prod_{i=0}^{n}{(iu+(n-i)v)}\left(\frac{1}{(nu)(nv)}-\frac{1}{((n-1)u+v)((n-1)v+u)}\right)&=\\
\frac{n(n-2)}{(u-v)^2}\prod_{i=0}^{n}{(iu+(n-i)v)}\left(\frac{n-1}{(nu)((n-1)u+v)((n-1)v+u)(nv)}\right)&.
\end{align*}
Applying \cite[Theorem 6.1]{FNR05} yields the result.
\end{proof}

\begin{proof}[Proof by ``slow projection'']
  Let $V$ be a $2$-dimensional vector space, $v_1,\ldots,v_n$ pairwise
  linearly independent vectors of $V$, $X\subset\mathbb{P}^1$ the
  corresponding point configuration supported on $p_1,\ldots,p_n$, and
  $Z\subset \mathbb{P}(\on{Sym}^nV)$ the orbit closure. The key fact
  we will use is
\begin{clm}
\label{22}
Every point in the boundary of $Z$ corresponds to a point
configuration in $\mathbb{P}^1$ supported on two points with
multiplicities $n-1$ and 1 or one point with multiplicity $n$.
\end{clm}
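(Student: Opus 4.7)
The plan is to prove the claim in two steps: first, reduce every boundary point to a one-parameter subgroup degeneration of $[F]$; second, compute such degenerations explicitly.

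For Step 1, I would use the valuative criterion: since $Z \subset \mb{P}(\on{Sym}^n V)$ is proper, every point $y \in \partial Z$ is the specialization of some morphism $\Spec(R) \to Z$ with $R$ a DVR whose generic fiber lands in the orbit $O = PGL_2\cdot [F]$. Pulling back along the orbit map $PGL_2 \to O$ yields a morphism $\Spec(\on{Frac} R) \to PGL_2$. The key input is the Cartan decomposition (Smith normal form) for $PGL_2$ over a DVR: after a finite base change if needed, any such morphism factors as $k_1(t)\,\lambda(t)\,k_2(t)$, where $k_1,k_2 \in PGL_2(R)$ extend across the special fiber and $\lambda(t)=\on{diag}(t^a,1)$ is a one-parameter subgroup with $a\geq 0$. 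A comparison of $t$-orders in $\on{Sym}^n V$ then shows $y = k_1(0) \cdot \lim_{t\to 0}\lambda(t)\cdot F'$, where $F' := k_2(0)\cdot F$ still has $n$ distinct roots. Thus every boundary point is $PGL_2$-conjugate to the limit of a one-parameter subgroup acting on some form with $n$ distinct roots.

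For Step 2, in coordinates adapted to $\lambda$, write $F' = \sum_{k=0}^n c_k\, x^{n-k} y^k$ so that $\lambda(t)$ acts by $x^{n-k}y^k \mapsto t^{-a(n-k)} x^{n-k}y^k$. The projective limit is then
\[ \lim_{t\to 0}[\lambda(t)\cdot F'] \;=\; [\,x^{n-k_0}\,y^{k_0}\,], \qquad k_0 = \min\{k : c_k\neq 0\}, \]
which is supported at the two fixed points $[1{:}0]$ and $[0{:}1]$ of $\lambda$ with multiplicities $n-k_0$ and $k_0$. Since $k_0$ is precisely the multiplicity of $[1{:}0]$ as a root of $F'$ and $F'$ has $n$ distinct roots, $k_0\in\{0,1\}$. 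Hence the limit is of type $(n)$ or $(n-1,1)$, as desired.

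The main obstacle is Step 1: we need an algebraic substitute for the Iwasawa-type decomposition that works over a DVR rather than over $\mb{R}$ or $\mb{C}$. The Cartan decomposition is standard (essentially elementary divisors for $GL_2$), but some care is required to handle the finite base change needed to make the 1-PS exponent $a$ integral, and to verify that higher-order terms of $k_2(t)\cdot F$ do not perturb the projective limit (this follows by weight-by-weight $t$-order comparison). After that, Step 2 is a routine weight computation.
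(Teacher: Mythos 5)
There is a genuine gap at the bridge between your two steps: the assertion that the higher-order terms of $k_2(t)\cdot F$ do not perturb the projective limit of $\lambda(t)\cdot\bigl(k_2(t)\cdot F\bigr)$ is false, and the ``weight-by-weight $t$-order comparison'' you invoke exposes the problem rather than resolving it. Write $k_2(t)\cdot F=\sum_k \tilde c_k(t)\,x^{n-k}y^k$ with $\tilde c_k(0)=c_k$, so the coefficient of $x^{n-k}y^k$ in $\lambda(t)\cdot\bigl(k_2(t)\cdot F\bigr)$ has $t$-order $e_k=\on{ord}_t\tilde c_k-a(n-k)$, and the limit is governed by the set of $k$ minimizing $e_k$. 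For $k<k_0$ you only know $\on{ord}_t\tilde c_k\geq 1$, hence $e_k\geq 1-a(n-k)=e_{k_0}+1-a(k_0-k)$; since $a\geq 1$ and $k_0-k\geq 1$, this lower bound is $\leq e_{k_0}$, so the perturbation terms can tie with or beat the term you keep. Concretely, take $n=3$, $F=y(x-y)(x+y)$ (so $F'=F$ and $k_0=1$), let $k_2(t)$ act by $x\mapsto x$, $y\mapsto tx+y$ (a unipotent element of $PGL_2(R)$ reducing to the identity), and take $a=2$: then $\tilde c_0=t-t^3$, giving $e_0=1-6=-5<-4=e_1$, and the actual limit is $[x^3]$ rather than your predicted $[x^{2}y]$. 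So the identity $y=k_1(0)\cdot\lim_{t\to 0}\lambda(t)\cdot F'$ fails, and with it your computation of the limit.

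The conclusion of the Claim does survive, because $k_0\leq 1$ means the only coefficient that can interfere is $\tilde c_0$, and every possible outcome ($[x^n]$, $[x^{n-1}(\gamma_0x+\gamma_1y)]$, or $[x^{n-1}y]$ according as $\on{ord}_t\tilde c_0$ is less than, equal to, or greater than $a$) is still of type $(n)$ or $(n-1,1)$; but this extra case analysis is exactly what your write-up omits. A cleaner repair is to track the individual linear factors $\lambda(t)k_2(t)\cdot v_i$ rather than the coefficients of the form, and this is essentially what the paper does --- moreover it makes the Cartan decomposition unnecessary. The paper simply divides the family $A(t)\in\on{End}(V)$ by a power of the uniformizer so that $A(0)\neq 0$; then either $A(0)$ has rank $2$ (and the limit configuration has $n$ distinct points, so is not a boundary point) or rank $1$, in which case every $v_i\notin\ker A(0)$ limits to the single point of $\mb{P}(V)$ given by the image of $A(0)$, and at most one $v_i$ lies in the one-dimensional kernel since the $v_i$ are pairwise independent. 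As written, your proof rests on a false intermediate statement and does not establish the Claim.
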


\begin{proof}[Proof of \Cref{22}]
  Let $A(t)$ be a 1-parameter family of matrices, or more precisely a
  map from the spectrum of a discrete valuation ring to $\on{End}(V)$
  where the generic point maps to an element of $GL(V)$. We want to
  show that the multiset
  $S=\{\lim_{t\to 0}A(t)p_i\mid 1\leq i\leq n\}$ does not have two
  copies each of two distinct points. First, we can assume the rank of
  $A(0)=1$. If the rank of $A(0)$ is 2, then $S$ consists of distinct
  points. If $A(0)=0$, we can divide out by a power of the
  uniformizing parameter so that $A(0)\neq 0$.  Then,
  $\{\lim_{t\to 0}A(t)p_i\mid 1\leq i\leq n\}$ is the point in
  $\mb{P}(V)$ corresponding to the $1$-dimensional image of $A(0)$ if
  $v_i$ is not in the kernel of $A(0)$. Otherwise, there is at most
  one $v_i$ in the kernel of $A(0)$ and
  $\{\lim_{t\to 0}A(t)v_i\mid 1\leq i\leq n\}$ is otherwise
  unrestricted.
\end{proof}

Let $x,y$ be a basis for $V$. Then, a basis of $\on{Sym}^nV$ is
$x^n,x^{n-1}y,\ldots,y^n$. Let $T\subset GL(V)$ be the maximal torus
corresponding to the basis $x,y$. Since
$A^{\bullet}_{GL(V)}(\mathbb{P}(\on{Sym}^nV))\to
A^{\bullet}_T(\mathbb{P}(\on{Sym}^nV))$ is injective, we can use a
$T$-equivariant degeneration and compute the $T$-equivariant
class. Our $T$-equivariant degeneration will be to scale the
coordinates corresponding to $x^{n-2}y^2,\ldots,x^2y^{n-2}$ to zero.

By \Cref{22}, $Z$ is disjoint from the source of this ``slow
projection,'' so the $T$-equivariant class of $Z$ is a multiple of the
class of the $3$-plane in $\mathbb{P}(\on{Sym}^nV)$ given by the
vanishing of the coordinates corresponding to
$x^{n-2}y^2,\ldots,x^2y^{n-2}$. The class of that $3$-plane is
$\prod_{i=2}^{n-2}{(iu+(n-i)v)}$. The multiple we need is the degree
of $Z$ as a projective variety, which is $n(n-1)(n-2)$ by the
combinatorial argument given in \cite[Introduction]{AFpoint}.
\end{proof}

\begin{rmk}
  \Cref{multiplicityone} can be generalized in a different
  direction. Suppose we fix $d$ general points
  $p_1,\ldots,p_d\in \mathbb{P}^r$ and consider all configurations of
  $d$ points given by mapping $p_1,\ldots,p_d$ via a linear rational
  map $\mathbb{P}^r\to \mathbb{P}^1$. Let the closure of these
  configurations in $\on{Sym}^d\mathbb{P}^1$ be $Z_{r,d}$. The same
  proof of \Cref{22} using slow projection shows the equivariant class
  of $Z_{r,d}$ in
  $A^{\bullet}_{GL(V)}(\on{Sym}^d\mathbb{P}^1)=\mathbb{Z}[u,v][H]/(\prod_{i=0}^{n}{H+iu+(n-i)v})$
  has constant term
\begin{align*}
    \prod_{i=0}^{2r+1}{(d-i)}\prod_{i=r+1}^{d-r-1}{(iu+(d-i)r)},
\end{align*}
and the full class is given by substituting
$u\to u+\frac{H}{d},v\to v+\frac{H}{d}$ into the constant term
\cite[Theorem 6.1]{FNR05}. These are examples of \emph{generalized
  matrix orbits} defined in \cite{FML}. Also see \cite[Example
1.3]{T19}.
\end{rmk}
\section{Points on $\mathbb{P}^1$ via Atiyah-Bott}
\label{AB}
The method in \Cref{sec:pointsonline} was closer to the theme of
equivariant degeneration explored in this paper. We note that there is
self-contained proof given by the Atiyah-Bott formula, or equivalently
resolution and integral via localization \cite[Section 4]{FR06}. The
authors attempted to perform the same method for smooth plane curves
using the resolution given by \cite{AF93}, but the computation of the
normal bundles quickly became intractable.

\subsection{General setup}
Let $V$ be a $2$-dimensional vector space with $T=(\mb{C}^{\times})^2$
acting by scaling. Then, we have $T$-action on
$\mb{P}^3\cong \mb{P}{\rm Hom}(V,\mb{C}^2)$. Given a point
configuration of $d$-points in $\mb{P}^1$ (a central hyperplane
configuration in $\mb{C}^2)$, we have a rational map
\begin{align*}
\mb{P}{\rm Hom}(V,\mb{C}^2)\dashrightarrow \mathbb{P}(\on{Sym}^dV)
\end{align*}
Suppose our point configuration consists of $n$ distinct points $p_1,\ldots,p_n$ with multiplicities $m_1,\ldots,m_n$ summing to $d$. Then, the base locus is $n$ disjoint lines, given by matrices with image contained in each $p_1,\ldots,p_n$. Picking a basis, we find
$\mb{P}{\rm Hom}(V,\mb{C}^2)$ is given by 2 by 2 matrices
$\begin{pmatrix} \ast & \ast \\ \ast & \ast \end{pmatrix}$ up to
scaling. The $T$ action is by scaling the columns, and each of the
base loci (after base change via a left $GL_2$-action) looks like
$\begin{pmatrix} \ast & \ast \\ 0 & 0\end{pmatrix}$.

Let $X$ be the blow up of $\mb{P}^3$ along these base loci
$R_1,\ldots,R_n$. This resolves the rational map above
\cite[Proposition 1.2]{AF93}.

\subsection{Normal bundle to a proper transform}
\begin{lem}
\label{SES}
Let $Z\subset Y$ be an inclusion of smooth varieties. Let $W\subset Y$
be a smooth subvariety and $\wt{W}\subset {\rm Bl}_Z Y$ be the proper
transform of $W$. If $\pi: {\rm Bl}_Z Y\to Y$ is the blowup map, then
we have the short exact sequence
\begin{align*}
  0\to \on{coker}(\pi^{*}N_{W/Y}^\vee \to N_{\wt{W}/{\rm Bl}_Z Y}^\vee)\to \Omega_{{\rm Bl}_Z Y/Y}|_{\wt{W}}\to  \Omega_{\wt{W}/W}\to 0. 
\end{align*}
\end{lem}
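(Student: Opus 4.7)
The plan is to apply the snake lemma to a commutative diagram of conormal sequences associated to the blowup. Write $\pi: {\rm Bl}_Z Y \to Y$ for the blowup map and $f = \pi|_{\wt W}: \wt W \to W$ for its restriction, which is a birational morphism. Pulling back the conormal sequence $0 \to N_{W/Y}^\vee \to \Omega_Y|_W \to \Omega_W \to 0$ along $f$ and placing it over the conormal sequence for $\wt W \subset {\rm Bl}_Z Y$ produces a commutative diagram
\begin{equation*}
\begin{array}{ccccccccc}
0 & \to & \pi^*N_{W/Y}^\vee|_{\wt W} & \to & \pi^*\Omega_Y|_{\wt W} & \to & \pi^*\Omega_W|_{\wt W} & \to & 0 \\
 & & \downarrow & & \downarrow & & \downarrow & & \\
0 & \to & N_{\wt W/{\rm Bl}_Z Y}^\vee & \to & \Omega_{{\rm Bl}_Z Y}|_{\wt W} & \to & \Omega_{\wt W} & \to & 0,
\end{array}
\end{equation*}
with vertical arrows $\alpha, \beta, \gamma$ (left to right) all induced by the differential $d\pi$. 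Both rows remain short exact since they are pullbacks of short exact sequences of locally free sheaves, namely the conormal sequences of smoothly embedded subvarieties.

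The snake lemma yields the six-term exact sequence
\begin{equation*}
0 \to \ker\alpha \to \ker\beta \to \ker\gamma \to \on{coker}\alpha \to \on{coker}\beta \to \on{coker}\gamma \to 0.
\end{equation*}
The key observation is that $\gamma$ is injective. Indeed, $f: \wt W \to W$ is a birational morphism between smooth varieties, and $\gamma$ is precisely the cotangent pullback $f^*\Omega_W \to \Omega_{\wt W}$. Both source and target are locally free of the same rank, and $\gamma$ is an isomorphism on the dense open complement of the exceptional locus of $f$; a map between torsion-free sheaves on a smooth variety that is generically an isomorphism is necessarily injective. Thus $\ker\gamma = 0$, and the snake sequence collapses to
\begin{equation*}
0 \to \on{coker}\alpha \to \on{coker}\beta \to \on{coker}\gamma \to 0.
\end{equation*}

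To conclude I identify the three cokernels. By construction, $\on{coker}\alpha$ is the first term in the desired short exact sequence. The relative cotangent sequence $\pi^*\Omega_Y \to \Omega_{{\rm Bl}_Z Y} \to \Omega_{{\rm Bl}_Z Y/Y} \to 0$ is right-exact, and restricting to $\wt W$ (i.e., tensoring with $\mathcal{O}_{\wt W}$) preserves right-exactness, identifying $\on{coker}\beta$ with $\Omega_{{\rm Bl}_Z Y/Y}|_{\wt W}$. The analogous relative cotangent sequence for $f$ identifies $\on{coker}\gamma$ with $\Omega_{\wt W/W}$. Substituting yields exactly the asserted short exact sequence.

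The main obstacle is verifying that $\gamma$ is injective, which rests on $\wt W$ being smooth and on $f$ being birational. This is not automatic from the hypotheses as stated; in the intended applications it is guaranteed by transversality of $W$ with $Z$, so that $\wt W$ is the blowup of $W$ along the smooth intersection $W \cap Z$. The rest of the argument is formal diagram-chasing via the snake lemma.
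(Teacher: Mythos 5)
Your argument is correct and is essentially the paper's own proof: the paper arranges the same data into a $3\times 3$ diagram and invokes the nine lemma, which amounts to exactly your snake-lemma computation on the map between the two conormal sequences. The key input you verify by hand --- injectivity of $f^{*}\Omega_W \to \Omega_{\wt{W}}$ via torsion-freeness --- is the same fact the paper imports as the left-exactness of the relative cotangent sequence for a generically separable morphism of integral smooth varieties.
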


\begin{proof}
Consider the following diagram
\begin{center}
\begin{tikzcd}
  & 0 \arrow[d] & 0 \arrow[d] & &\\
  & \pi^{*}N_{W/Y}^\vee \arrow[r] \arrow[d] & N_{\wt{W}/{\rm Bl}_Z Y}^\vee  \ar[d] &  &\\
  0 \arrow[r] &  \pi^{*}\Omega_{Y}|_{\wt{W}} \arrow[r] \ar[d] &  \Omega_{{\rm Bl}_Z Y}|_{\wt{W} } \arrow[r] \ar[d] &  \Omega_{{\rm Bl}_Z Y/Y}|_{\wt{W}} \arrow[r] \ar[d]& 0\\
  0 \arrow[r] & \pi^{*}\Omega_W \arrow[r]  \ar[d] & \Omega_{\wt{W}/W}  \arrow[r] \ar[d] & \Omega_{\wt{W}/W} \arrow[r] \ar[d] & 0\\
  & 0 & 0 & 0
\end{tikzcd}
\end{center}
The bottom two rows are exact by the relative cotangent sequence for a
generically separable morphism of integral smooth varieties
\cite[Remark 4.17]{Liu}. The lemma follows from the nine lemma.
\end{proof}

\subsection{Setup for Atiyah-Bott integration}
In order to apply Atiyah-Bott integration to $X$, we need to identify
the fixed loci, their normal bundles, and how classes restrict from
$X$ to the fixed point loci.

\subsection{Fixed point loci}
First, we note that the fixed-point loci of $\mb{P}^3$ under the
action of $T$ consists of two disjoint $\mb{P}^1$'s which we will call
$C_1,C_2$.
\begin{align*}
\begin{pmatrix}
\ast & 0\\
\ast & 0
\end{pmatrix}
\qquad
\begin{pmatrix}
0 & \ast \\
0 & \ast
\end{pmatrix}
\end{align*}
The fixed-point loci of $X$ under the action of $T$ must lie over the
fixed-point loci of $\mb{P}^3$ under $T$. Therefore, we conclude that the fixed loci consist of the following 2n+2 components:
\begin{enumerate}
\item 2 fixed point loci corresponding to $\mb{P}^1$'s that are the
  proper transforms $\wt{C_1}$ and $\wt{C_2}$ of $C_1$ and $C_2$. If we suppose $R_1$ is the $\mathbb{P}^1$ consisting of the matrices
  $\begin{pmatrix} \ast & \ast \\ 0 & 0\end{pmatrix}$, then the point
  of the proper transform lying above $C_1\cap R_1$ is given by the
  limiting point of
\begin{align*}
\begin{pmatrix} 
1 & 0\\
0 & 0
\end{pmatrix} 
+ t 
\begin{pmatrix} 
0 & 0\\
1 & 0
\end{pmatrix}
\end{align*}
as $t\to 0$. 

\item $2n$ isolated points that lie over the $2n$ pairwise
  intersections of $C_1,C_2$ with $R_1,\ldots,R_n$. If we suppose
  $R_1$ is the $\mathbb{P}^1$ consisting of the matrices
  $\begin{pmatrix} \ast & \ast \\ 0 & 0\end{pmatrix}$, then the
  isolated torus-fixed point lying above $C_1\cap R_1$ is given by the
  limiting point of
\begin{align*}
\begin{pmatrix} 
1 & 0\\
0 & 0
\end{pmatrix} 
+ t 
\begin{pmatrix} 
0 & 0\\
0 & 1
\end{pmatrix}
\end{align*}
as $t\to 0$. 
\end{enumerate}

\subsubsection{Normal bundles and restriction of proper transforms}
Let $H$ be the $c_1(\ms{O}_{\mb{P}^3}(1))$ on $\mb{P}^3$ pulled back
to $X$ and $E$ an exceptional divisor of $X\to \mb{P}^3$. We have
$\wt{C}_1$ is $\mb{P}^1$ with a trivial T-action. Therefore,
$A^{\bullet}(\wt{C}_1)\cong \mb{Z}[z][u,v]/(z^2)$. We have the
following restrictions:
\begin{align*}
H \mapsto H=z-u\\
E\mapsto z = H+u. 
\end{align*}
Here, $E$ is any of the $n$ exceptional divisors.  (We are thinking of
$\wt{C}_1$ as the $\mb{P}^1$ embedded as the first column of 2 by 2
matrices $\mb{P}^3$ up scaling. Therefore, it's actually natural to
think of it as the projectivization of a vector bundle with a
nontrivial $T$-action, so it is a projective bundle over a point that
is trivial, but $\ms{O}(1)=H$ is twisted. The Leray relation in this
case is $(H+u)^2=z^2$.)

We need to compute the normal bundle to the proper transform of
$C_1$. The normal bundle of $C_1$ in $\mb{P}^3$ is
\begin{align*}
\frac{c(\mb{P}^3)}{c(C_1)}=\frac{(1+u+H)^2(1+v+H)^2}{(1+u+H)^2}=(1+v+H)^2.
\end{align*}
Note that this also makes sense as $C_1$ is a complete intersection
cut out by $(v+H)^2$.
Applying Lemma \ref{SES} yields 
\begin{align*}
0\to \pi^{*}N_{C_1/\mb{P}^3}^\vee\to N_{\wt{C}_1/X}^\vee\to \Omega_{X/\mb{P}^3}|_{\wt{C}_1}\to 0.
\end{align*}
The term on the right is a skyscraper sheaf supported on the
intersection of $\wt{C}_1\cong \mb{P}^1$ with the exceptional
locus. We need to find the torus action on the bundle
$T_{X/\mb{P}^3}|_{\wt{C}_1}$ supported on $E$ at the intersection
$E\cap \wt{C}_1$. There is an affine neighborhood of $E\cap \wt{C}_1$
in $X$ of the form
\begin{align*}
\begin{pmatrix}
1 & \frac{a_{01}}{a_{00}}\\
\frac{a_{10}}{a_{00}} & \frac{a_{11}}{a_{00}}
\end{pmatrix}
+ t
\begin{pmatrix}
0 & 0 \\
1 & \frac{A_{11}}{A_{10}}
\end{pmatrix}
\end{align*}
with coordinates given by $\frac{a_{01}}{a_{00}}$,
$\frac{a_{10}}{a_{00}} $, $\frac{A_{11}}{A_{10}}$ as
$\frac{A_{11}}{A_{10}}a_{10}=a_{11}$. We have the short exact sequence
\begin{align*}
0\to \wt{C}_1(-z)\otimes \mb{C}_{v-u}\to \wt{C}_1\otimes \mb{C}_{v-u}\to \mb{C}_{v-u}|_{\wt{C}}\cap \pi^{-1}(R_i)\to 0,
\end{align*}
where $\mb{C}_{v-u}$ is the nonequivariantly trivial line bundle with
an action of $T$ by the character $v-u$. The torus action has
character $v-u$ on the coordinate $\frac{A_{11}}{A_{10}}$, so the term
on the right has chern class $\frac{1+v-u}{1-z+v-u}$. We apply this
for each $i$ to find
\begin{align*}
c(N_{\wt{C}_1/X}) &= (1+H+v)^2\frac{(1-z+v-u)^n}{(1+v-u)^n}\\
&=(1+v-u)^2(1+\frac{z}{1+v-u})^2(1-\frac{z}{1+v-u})^n\\
&=(1+v-u)^2(1+\frac{(2-n)z}{1+v-u})\\
&=(1+z+v-u)(1+(1-n)z+v-u).
\end{align*}

\subsubsection{Restriction to isolated points}
Suppose one of our exceptional divisors is the blow up of the locus
$R$ consisting of matrices
$\begin{pmatrix} \ast & \ast \\ 0 & 0 \end{pmatrix}$ with image
contained in $[1:0]\in \mathbb{P}^1$. Then, as described in Section
B.4, there is an isolated torus-fixed point in the exceptional $E$
lying over the locus $R$ given as the limit as $t\rightarrow 0$ of the
one-parameter family
\begin{align*}
\begin{pmatrix}
1 & 0\\
0 & 0
\end{pmatrix}
+ t
\begin{pmatrix}
0 & 0\\
0 & 1 
\end{pmatrix}.
\end{align*}
Then, we have the restrictions
\begin{align*}
H&\mapsto -u\\
E&\mapsto v-u.
\end{align*}
Here, $E$ is the exceptional divisor containing $p$. The first one is
by restricting the tautological line bundle and considering the torus
action. To see the restriction of $E$, we note that the restriction of
$E$ to itself is $\ms{O}_{\mb{P}(N_{R_1/\mb{P}^3})}(-1)$. Then, we
take the local chart around $\pi(p)$ consisting of
\begin{align*}
\begin{pmatrix}
1 & \frac{a_{01}}{a_{00}}\\
\frac{a_{10}}{a_{00}} & \frac{a_{11}}{a_{00}}
\end{pmatrix}
\end{align*}
and find the action on the coordinate $\frac{a_{11}}{a_{00}}$ is
$v-u$. Also the normal bundle to $p$ in $X$ has chern class
\begin{align*}
(1+v-u)^2(1+u-v). 
\end{align*}
To see this, consider the local chart around $p$
\begin{align*}
\begin{pmatrix}
1 & \frac{a_{01}}{a_{00}}\\
0 & \frac{a_{11}}{a_{00}}
\end{pmatrix}
+ t
\begin{pmatrix}
0 & 0 \\
\frac{A_{10}}{A_{11}} & 1
\end{pmatrix}
\end{align*}
which has local coordinates $ \frac{a_{01}}{a_{00}}$,
$\frac{a_{11}}{a_{00}}$, $\frac{A_{10}}{A_{11}}$ on which $T$ acts by
characters $v-u$, $v-u$ and $u-v$ respectively.

\subsection{Application of Atiyah-Bott}

\begin{proof}[Proof of \Cref{points}]
  As before, we compute $\p_X(-u,-v)$ due to our sign conventions. Let
\begin{align*}
\phi(H) = \frac{(H+du)(H+(d-1)u+v)\cdots (H+dv)-(du)\cdots (dv)}{H}.
\end{align*}

We want to pull $\phi(H)$ back to $X$ and integrate using
Atiyah-Bott. We first integrate over $\wt{C_1}$. Since $H$ pulls back
to
$$dH-\sum_{i=1}^{n}{m_iE_i}=d(z-u)-dz=-du,$$
 this is
\begin{align*}
[z]\frac{1}{(z+v-u)((1-n)z+v-u)}\phi(-du)&=\\
[z]\frac{\frac{1}{(v-u)^2}}{(1+\frac{z}{v-u})(1+\frac{(1-n)z}{v-u})}\frac{-\prod_{i=0}^{d}{(iu+(d-i)v)}}{-du}&=\\
\frac{(n-2)\prod_{i=1}^{d}{(iu+(d-i)v)}}{(v-u)^3}&. 
\end{align*}
Adding this to the contribution of $\wt{C_2}$ yields
\begin{align*}
(n-2)\prod_{i=0}^{d}{(iu+(d-i)v)}\frac{1}{(v-u)^3}\left(\frac{1}{du}-\frac{1}{dv}\right)&=\\
(n-2)\prod_{i=0}^{d}{(iu+(d-i)v)}\frac{1}{(v-u)^2}\frac{1}{duv}
\end{align*}
For each $1\leq i\leq n$, we have a point in the configuration of
multiplicity $m_i$. We have two isolated fixed points corresponding to
$i$ lying above $R_i\cap C_1$ and $R_i\cap C_2$. For the point lying
above $R_i\cap C_1$, $H$ pulls back to $dH-m_iE$, where $E$ is the
exceptional divisor lying above $R_i$. This restricts to
\begin{align*}
-du-n(v-u)=(-d+m_i)u-m_iv
\end{align*}
at the fixed point. The contribution to Atiyah Bott is
\begin{align*}
\frac{1}{(u-v)^3}\phi((-d+m_i)u-m_iv)&=\\
\frac{1}{(u-v)^3}\frac{-\prod_{j=0}^{d}{(ju+(d-j)v)}}{(-d+m_i)u-m_iv}&.
\end{align*}
Adding this to the contribution of the fixed point lying above
$R_i\cap C_2$, we get
\begin{align*}
  \frac{1}{(u-v)^3}\prod_{j=0}^{d}{(ju+(d-j)v)}\left(\frac{1}{(d-m_i)u+m_iv}-\frac{1}{(d-m_i)v+m_iu}\right)&=\\
  -\frac{1}{(u-v)^2}\prod_{j=0}^{d}{(ju+(d-j)v)}\frac{d-2m_i}{((d-m_i)u+m_iv)((d-m_i)v+m_iu)}&.
\end{align*}
Adding the contributions up yields the desired result.
\end{proof}

\section{Cubic plane curves}
\label{sec:cubicplanecurves}

The computations of $\p_C$ for cubic plane curves $C$ are elementary,
but we provide them here for the sake of completeness.

The following table provides a complete list of polynomials classes
$[\Orb(C)]_{GL(V)}$ for all cubics.  When the automorphism group is
infinite, the entries are simply $[\Orb(C)]_{GL(V)}$.  When the
automorphism group is finite, the entries are $\p_{C}$.
\begin{center}
\begin{longtable}{l | p{8cm}|p{2cm}}
  Cubic Curve $C$ & $[\Orb(C)]_{GL(V)}$ or $\p_{C}$ &$\#\on{Aut}$\\\hline
  Triple Line & $-(72 c_1^3 c_2^2+36 c_1 c_2^3+36 c_1^4 c_3-162 c_1^2 c_2 c_3+243 c_1 c_3^2)$ & $\infty$\\
  Double Line plus Line &
                          $-(72 c_1^3 c_2 + 36 c_1 c_2^2 - 108 c_1^2 c_3)$& $\infty$\\
  Three concurrent lines &
                           $12 c_1^4 + 6 c_1^2 c_2 + 27 c_1 c_3$& $\infty$\\
  Conic plus tangent line & 
                            $ -36 c_1^3 - 18 c_1 c_2$& $\infty$ \\
  Triangle & 
             $-(12 c_1^3 + 6 c_1 c_2 + 27 c_3)$& $\infty$ \\
  Conic plus line & 
                    $18 c_1^2 + 9 c_2$& $\infty$ \\
  Cuspidal cubic & 
                   $24 c_1^2$& $\infty$ \\
  Irreducible nodal cubic & 
                            $(-12c_1)6$& 6\\ 
  Smooth cubic ($j \neq 0, 1728$) &
                                    $(-12c_1)18$&18 \\
  Smooth cubic with $j = 1728$ &
                                 $(-6c_1)36$&36\\
  Smooth cubic with $j = 0$ &
                              $(-4c_1)54$&54
\end{longtable}
\end{center}

Let us only indicate the methods of calculation, leaving details to
the reader.  The formulas for a triple line, double line plus line,
conic plus line, and triangle can all be obtained via presentation and
integration along the lines of \cite[Theorem 3.1]{FNR06} and
\Cref{fourlines}. This is the method of resolution and integration
\cite[Section 3]{FR06}.

The formula for three concurrent lines, conic plus tangent line and
cuspidal cubic can be gotten by applying Kazarian's formula
\cite[Theorem 1]{Kazarian} for counting $D_4$, $A_3$, and $A_2$
singularities respectively. This was carried out for the case of
quartic plane curves for $A_6$, $D_6$, and $E_6$ in the proof of
\Cref{A6E6orbit}. The formula for smooth and nodal cubics can be
obtained by their predegree formulas \cite[Section 3.6]{AF93} and
\Cref{projectivethom}.

\bibliographystyle{alpha}
\bibliography{references.bib}

\end{document}